\definecolor{dg}{rgb}{0, 0.5, 0}
\newcommand{\be}{\mathbf{E}}
\newcommand{\bp}{\mathbf{P}}
\newcommand{\id}{\mbox{Id}}
\newcommand{\iot}{\int_{0}^{t}}
\newcommand{\var}{\textbf{Var}}
\newcommand{\1}{{\bf 1}}
\newcommand{\W}{\mathscr W}
\newcommand{\lp}{\left(}
\newcommand{\rp}{\right)}
\newcommand{\lc}{\left[}
\newcommand{\rc}{\right]}
\newcommand{\lcl}{\left\{}
\newcommand{\rcl}{\right\}}
\newcommand{\lln}{\left|}
\newcommand{\rrn}{\right|}
\newcommand{\lla}{\left\langle}
\newcommand{\rra}{\right\rangle}
\newcommand{\al}{\alpha}
\newcommand{\del}{\delta}
\newcommand{\ep}{\varepsilon}
\newcommand{\ga}{\gamma}
\newcommand{\gga}{\Gamma}
\newcommand{\ka}{\kappa}
\newcommand{\la}{\lambda}
\newcommand{\laa}{\Lambda}
\newcommand{\si}{\sigma}
\newcommand{\te}{\theta}
\newcommand{\vp}{\varphi}
\def \eref#1{\hbox{(\ref{#1})}}
\newcommand{\beq}{\begin{equation}}
\newcommand{\eeq}{\end{equation}}
\newcommand{\bea}{\begin{eqnarray}}
\newcommand{\eea}{\end{eqnarray}}
\newcommand{\beas}{\begin{eqnarray*}}
\newcommand{\eeas}{\end{eqnarray*}}
\newcommand{\ca}{{\mathcal A}}
\newcommand{\cb}{{\mathcal B}}
\newcommand{\cac}{{\mathcal C}}
\newcommand{\cd}{{\mathcal D}}
\newcommand{\cf}{{\mathcal F}}
\newcommand{\ch}{{\mathcal H}}
\newcommand{\cs}{{\mathcal S}}
\newcommand{\cw}{{\mathcal W}}
\newcommand{\R}{{\mathbb R}}
\newtheorem{theorem}{Theorem}[section]
\newtheorem{definition}[theorem]{Definition}
\newtheorem{hypothesis}[theorem]{Hypothesis}
\newtheorem{lemma}[theorem]{Lemma}
\newtheorem{notation}[theorem]{Notation}
\newtheorem{proposition}[theorem]{Proposition}
\theoremstyle{remark}
\newtheorem{remark}[theorem]{Remark}
\newtheorem{example}[theorem]{Example}
\let\Section=\section
\def\section{\setcounter{equation}{0}\Section}
\title[Stochastic heat equation with multiplicative colored noise]
{Stochastic heat equations   with general  multiplicative \\
Gaussian noises: H\"older continuity and intermittency}
\author[Y. Hu, J. Huang, D. Nualart, S. Tindel]{Yaozhong Hu \and Jingyu Huang \and David Nualart \and Samy Tindel}
\address{Yaozhong Hu, Jingyu Huang and David Nualart: Department of Mathematics, University of Kansas, 405 Snow Hall, Lawrence, Kansas, USA.} 
\email{nualart@math.ku.edu, hu@math.ku.edu, jhuang@math.ku.edu}
\address{Samy Tindel: Institut {\'E}lie Cartan,
Universit\'e de Lorraine, B.P. 239,
54506 Vand{\oe}u\-vre-l{\`e}s-Nancy, France.}
\email{samy.tindel@univ-lorraine.fr}
\thanks{This project has been carried out while S. Tindel was on sabbatical at the University of Kansas. He wishes to express his gratitude to this institution for its warm hospitality.}
\subjclass[2010]{60G15; 60H07; 60H10; 65C30}
\keywords{Fractional Brownian motion, Malliavin calculus, Skorohod integral, Young's integral, stochastic partial differential equations,  Feynman-Kac formula, intermittency. }
\begin{document}
\begin{abstract}
This paper studies  the stochastic heat equation with
multiplicative
  noises: $\frac {\partial u }{\partial t} =\frac  12 \Delta
u  + u \dot{W}$, where $\dot W$ is a   mean zero Gaussian noise and
$u \dot{W}$ is interpreted  both in the sense of Skorohod and
Stratonovich. The existence and uniqueness of the solution are
studied for noises with     general time and spatial covariance
structure. Feynman-Kac formulas for the solutions and for the
moments of the solutions are obtained under   general and different
conditions. These formulas are applied to obtain the H\"older
continuity of the solutions.  They are also applied to obtain
 the  intermittency bounds for the moments of the solutions.
\end{abstract}

\maketitle

\tableofcontents

\section{Introduction}
In this paper we are interested in the stochastic heat equation in
$\mathbb{R}^d$ driven by a general multiplicative centered Gaussian
noise. This equation can be written as
\begin{equation}  \label{e1}
\frac {\partial u }{\partial t} =\frac  12 \Delta u  + u \dot{W},  \quad t>0, x\in \mathbb{R}^d,
\end{equation}
with initial condition $u_{0,x} =u_0(x)$, where $u_0$ is a continuous and bounded function.
In this equation, the notation $\dot{W}$ stands for the partial derivative
$\frac {\partial^{d+1}W}{ \partial t \partial x_1 \cdots \partial x_d} $ (or $\frac {\partial^{d}W}{  \partial x_1 \cdots \partial x_d} $ when the noise does not depend on time),
where $W$ is a random field  formally defined in the next section.
We assume that $\dot{W}$ has a covariance of the form
\[
\be \lc\dot{W}_{t,x} \dot{W}_{s,y}\rc=\gamma(s-t) \, \laa(x-y),
\]
where $\gamma$ and $\laa$ are general nonnegative and nonnegative
definite (generalized) functions satisfying some integrability
conditions. The product appearing in the above equation (\ref{e1})
can be interpreted as an ordinary product of the solution $u_{t,x}$
times the noise
 $\dot{W}_{t,x}$ (which is a distribution). In this case the 
 evolution form of the equation will
 involve
 a  Stratonovich integral (or  pathwise Young integral).  The product
 in  (\ref{e1}) can also be
also interpreted as a Wick product (defined in the next
 section) and in this case the solution satisfies an evolution equation
formulated by using the Skorohod
 integral. We shall consider both of these formulations.
 
\smallskip
 
There has been a widespread interest in the model \eqref{e1} in the recent past, with several motivations for its study:

\smallskip

\noindent $\bullet$
It is one of the basic stochastic partial differential equations (PDEs) one might wish to solve, either by extending It\^o's theory  \cite{Dalang,PZ} or by pathwise techniques \cite{CFO,GT}. These developments are also related to Zakai's equation from filtering theory.

\smallskip

\noindent $\bullet$
It appears naturally in homogenization problems for PDEs driven by highly oscillating stationary random fields (see \cite{GB,HPP,IPP} and references therein). Notice that in this case limit theorems are often obtained through a Feynman-Kac representation of the solution to the heat equation.

\smallskip

\noindent $\bullet$
Equation \eqref{e1} is also related to the KPZ growth model through the Cole-Hopf's transform. In this context, definitions of the equation by means of renormalization and rough paths techniques  have been recently investigated in \cite{GIP,Ha}.

\smallskip

\noindent $\bullet$
There is a strong connexion between equation \eqref{e1} and the partition function of directed and undirected continuum polymers. This link has been exploited in \cite{La,RT} and is particularly present in \cite{ABQ}, where basic properties of an equation of type \eqref{e1} are translated into corresponding properties of the polymer.

\smallskip

\noindent $\bullet$
The multiplicative stochastic heat equation exhibits concentration properties of its energy. This interesting phenomenon is referred to as \emph{intermittency} for the process $u$ solution to~\eqref{e1} (see e.g \cite{CFK,CFJK,CJK,FK,KLMS}), and as a \emph{localization} property for the polymer measure \cite{CH}. The intermittency property for our model is one of the main result of the current paper, and will be developed later in the introduction.

\smallskip

\noindent $\bullet$
Finally, the large time behavior of equation \eqref{e1} also provides some information on the random operator $Lu =\Delta u + \dot W u$. A sample of the related Lyapunov exponent literature is given by \cite{CM,Mo}.

\smallskip

\noindent
Being so ubiquitous, the model \eqref{e1} has thus obviously been the object of numerous studies.

\smallskip 

Indeed, when the noise $\dot W$ is white in time and colored in space, that is, when
$\gamma$ is the Dirac delta function $\delta_0(x)$, there is a huge
literature devoted to our linear stochastic heat equation.  Notice that in this case the stochastic integral involving $\dot W$
is interpreted in an extended It\^o sense.  Starting with the seminal paper
by Dalang \cite{Da}, these equations, even with more general
nonlinearities (namely $u \dot{W}$ in \eref{e1} is replaced by
$\sigma(u) \dot{W}$ for a general nonlinear function $\sigma$), have
received a lot of attention. In this context, the existence and uniqueness of a
solution is guaranteed by the integrability condition
\begin{equation} \label{mu}
\int_{\mathbb{R}^d}  \frac {\mu(d\xi)} {1+|\xi|^2} <\infty,
\end{equation}
where $\mu$ is the Fourier transform of $\laa$. This condition is sharp, in the sense that it is also
necessary  in the case of an additive noise.

\smallskip

Recently, there also has been a growing interest in studying equation \eref{e1} when the
noise is colored in time.  Unlike the case where the noise is white
in time,   one can  no longer make use of the martingale
structure of the noise, and just making sense of the equation offers
new challenges. Recent progresses for some specific
Gaussian noises  include \cite{BT, HN, Song Jian} by means of stochastic analysis methods, and \cite{CFO,GT,DGT} using rough paths arguments.

\smallskip

As mentioned above, we shall focus in this article on intermittency properties for the stochastic heat equation \eref{e1}.  There exist several ways to express this phenomenon, heuristically meaning that the process $u$ concentrates into a few very high peaks. However, all the definitions involve two functions $\{a(t); t\ge 0\}$ and $\{\ell(k); \, k\ge 2\}$ such that $\ell(k)\in
(0, \infty)$ and: 
\begin{equation}\label{eq:def-lyapunov-exponent}
\ell(k):=\limsup_{t\rightarrow \infty} \frac{1}{a(t)}\log\left(\be
\left[|u_{t,x}|^k\right] \right),
\end{equation}
where we assume that the limit above is independent of $x$. In this case, we call $a(t)$ the upper Lyapunov rate  and $\ell(k)$ the upper Lyapunov exponent. The process
$u$ is then called {\it weakly intermittent}  if
\[
\ell(2)>0\,,\quad \hbox{and}\quad \ell(k)<\infty\quad \forall  \
k\ge 2\,.
\]
The computation of the exact value of Lyapunov exponents is difficult in general.  A related property (which  corresponds to the intuitive notion of   intermittency) requires that  for any $k_1>k_2$ the moment of order $k_{1}$ is significantly greater than the moment of order $k_{2}$, or otherwise stated:

\begin{equation}\label{eq:fraction-moments-u-k}
\limsup_{t\rightarrow \infty}\frac{\be ^{1/k_1}
\left[|u_{t,x}|^{k_1}\right]}{\be ^{1/k_2}
\left[|u_{t,x}|^{k_2}\right]}=\infty\,.
\end{equation}
Most of the studies concerning this challenging property involve a white noise in time, and we refer to \cite{BC, CM, FK} for an account on the topic. The recent paper \cite{BalanConus} tackles the problem for a fractional noise in time, with some special (though important) examples of spatial covariance structures, within the landmark of Skorohod equations. In this case the results are confined to weak intermittency, with an upper bound on $L^{k}$ moments obtained  invoking hypercontractivity arguments and lower bounds computed 
only for the $L^2$ norm.

\smallskip

With all those preliminary considerations in mind, the current paper proposes to study existence-uniqueness results, Feynman-Kac representations, chaos expansions and intermittency results for a very wide class of Gaussian noises $\dot W$ (including in particular those considered in \cite{BalanConus, Dalang}), for both Skorohod and Stratonovich type equations \eref{e1}.
In particular  we obtain some lower bounds for $\ell(k)$ defined by \eqref{eq:def-lyapunov-exponent} for all $k\ge 2$,   which are sharp in the sense that they have the same exponential order as the upper
bounds. 

\smallskip

More specifically, here is a brief description of the results obtained in the current paper:

\begin{itemize}
\item[(i)] In the Skorohod case, the mild solution has a formal Wiener chaos expansion,
which converges in  $L^2(\Omega)$ provided $\gamma $ is locally
integrable and the spectral measure $\mu$ of the spatial  covariance
satisfies condition \eref{mu}. Moreover, the solution is unique.
This result (proved in Theorem \ref{thmSk1}) is based on Fourier
analysis techniques, and covers the particular examples of the Riesz
kernel and the Bessel kernel considered by Balan and Tudor in
\cite{BT}.  Our results also encompass the case of the fractional covariance
$\laa(x)=\prod_{i=1}^d H_i (2H_i-1) |x_i|^{2H_i-2}$, where $H_i
>\frac 12$ and condition \eref{mu} is satisfied if and only if
$\sum_{i=1}^d H_i >d-1$. This particular structure has been examined in \cite{HN}.


 \item[(ii)] Under these general hypothesis to  ensure the existence and uniqueness
of the solution of Skorohod type one cannot expect to have a
Feynman-Kac formula for the solution, but one can establish
Feynman-Kac-type formulas for the moments of the solution. The
formulas we obtain (see \eref{momSk1}), generalize those obtained for
the   Riesz or the Bessel kernels in \cite{BT,HN}.

\item[(iii)] Under more restrictive integrability assumptions  on $\gamma$ and $\mu$ (see Hypothesis \ref{hyp:mu2})
we derive a  Feynman-Kac formula for the solution $u$ to \eref{e1} in the Stratonovich sense. An immediate application of the Feynman-Kac formula is the H\"older continuity of the solution.

\item[(iv)] In the Stratonovich case, we give a notion of solution
using two different methodologies. One is based on the Stratonovich
integral defined as the limit in probability of the integrals with
respect to a regularization of the noise, and another one uses a
pathwise approach, weighted Besov spaces and a Young integral approach. 
We show that the two notions coincide and some existence-uniqueness results which are (to the best of our knowledge) the first link between pathwise and Malliavin calculus solutions to equation~\eref{e1}.

\item[(v)] Under some further restrictions (see hypothesis at the
beginning of Section 6), we obtain some sharp lower bounds for the
moments of the solution. Namely, we can find explicit numbers
$\kappa_1$ and $\kappa_2$ and constants $c_{j},C_{j}$ for $j=1,2$ such that
\[
C_{1}  \exp\left(c_{1} t^{\kappa_1}k^{\kappa_2}\right)\le \be
\left[|u_{t,x}|^{k}\right]\le 
C_{2} \exp\left(c_{2}
t^{\kappa_1}k^{\kappa_2}\right)\
\]
for all $t\ge 0$, $x\in \R^d$ and $k\ge 2$.
\end{itemize}
As it might be clear from the description above, our central object for the study of \eref{e1} is  
 the Feynman-Kac formula for the solution $u$ or for its moments, which
is a very interesting result in its own right. A substantial part of the article is devoted to establish these formulae with optimal conditions on the covariances $\gamma$ and $\Lambda$, including critical cases. Notice that we also handle the case of noises which only depend on the space variable. This situation is usually treated separately in the paper,  due to its particular physical relevance.

\smallskip


\smallskip

Here is the organization of the paper. In Section \ref{sec:preliminaries}, we briefly set
up some preliminary material on the Gaussian noises that we are
dealing with. We also recall some material from
Malliavin calculus. Section~\ref{sec:Skorohod} is devoted to the stochastic heat
equation of Skorohod type. Existence and uniqueness of the mild
solutions are obtained, and  Feynman-Kac formula for the moments of the
solution is established.  Section \ref{sec:eq-strato} focuses on the Feynman-Kac formula related to equation \eqref{e1} and studies the regularity of the process $u^{F}$ defined in that way under some conditions on $\gamma$ and $\Lambda$. In section \ref{sec:pathwise-solution} we first prove that the process $u^{F}$ can really be seen as a solution to the stochastic heat equation interpreted in a mild sense related to Malliavin calculus. However, uniqueness is missing in this general context.   Under some slightly more restrictive
conditions on the noises, we then study the existence and uniqueness of
the mild solution to equation~\eqref{e1} using Young integration techniques. Finally, Section~\ref{sec:moment-estimates} is
concerned with the bounds for the moments and related intermittency results.

\paragraph{\textbf{Notations.}}
In the remainder of the article, all generic constants will be denoted by $c,C$, and their value
may vary from different occurrences. We  denote by $p_{t}(x)$ the
$d$-dimensional heat kernel $p_{t}(x)=(2\pi
t)^{-d/2}e^{-|x|^2/2t} $, for any $t > 0$, $x \in \R^d$.

\section{Preliminaries}\label{sec:preliminaries}
This section is devoted to a further description  of the structure
of our noise $W$. We consider first the time dependent case and
later the time independent case. We will also provide some basic
elements of Malliavin calculus used in the paper.

\subsection{Time dependent noise}  \label{sec2.1}
Let us start by introducing some basic notions on Fourier transforms
of functions: the space of real valued infinitely differentiable
functions with compact support is denoted by $\mathcal{D} (
\mathbb{R}^d)$ or $\mathcal{D}$. The space of Schwartz functions is
denoted by $\mathcal{S} ( \mathbb{R}^d)$ or $\mathcal{S}$. Its dual,
the space of tempered distributions, is $\mathcal{S}' (
\mathbb{R}^d)$ or $\mathcal{S}'$. If $u$ is a vector  of tempered
distributions from  $\mathbb{R}^d$ to $\mathbb{R}^n$,  then we write
$u \in \mathcal{S}' (\mathbb{R}^d, \mathbb{R}^n)$. The Fourier
transform is defined with the normalization
\[ \mathcal{F}u ( \xi)  = \int_{\mathbb{R}^d} e^{- \imath \langle
   \xi, x \rangle} u ( x) d x, \]
so that the inverse Fourier transform is given by $\mathcal{F}^{- 1} u ( \xi)
= ( 2 \pi)^{- d} \mathcal{F}u ( - \xi)$.

\smallskip
Similarly to \cite{Da}, on a complete probability space
$(\Omega,\cf,\bp)$ we consider a Gaussian noise $W$ encoded by a
centered Gaussian family $\{W(\vp) ; \, \vp\in
\mathcal{D}([0,\infty)\times \R^{d})\}$, whose covariance structure
is given by
\begin{equation}\label{cov1}
\be\lc W(\vp) \, W(\psi) \rc
= \int_{\R_{+}^{2}\times\R^{2d}}
\varphi(s,x)\psi(t,y)\gamma(s-t)\laa(x-y)dxdydsdt,
\end{equation}
where $\gamma: \R \rightarrow \R_+$ and $\laa: \R^d \rightarrow
\R_+$ are  non-negative definite
functions and the Fourier transform $\cf\laa=\mu$ is a tempered
measure, that is, there is an integer $m \geq 1$ such that
$\int_{\R^d}(1+|\xi|^2)^{-m}\mu(d\xi)< \infty$.

\smallskip

 Let $\mathcal{H}$  be the completion of
$\mathcal{D}([0,\infty)\times\R^d)$
endowed with the inner product
\begin{eqnarray}\label{innprod1}
\langle \varphi , \psi \rangle_{\mathcal{H}}&=&
\int_{\R_{+}^{2}\times\R^{2d}}
\varphi(s,x)\psi(t,y)\gamma(s-t)\laa(x-y) \, dxdydsdt\\ \notag
&=&\int_{\R_{+}^{2}\times\R^{d}}  \cf \varphi(s,\xi) \overline{ \cf \psi(t,\xi)}\gamma(s-t) \mu(d\xi) \, dsdt,
\end{eqnarray}
where $\cf \varphi$ refers to the Fourier transform with respect to the space variable only.
 The mapping $\varphi \rightarrow W(\varphi)$ defined in $\mathcal{D}([0,\infty)\times\R^d)$  extends to a linear isometry between
$\mathcal{H}$  and the Gaussian space
spanned by $W$. We will denote this isometry by
\begin{equation*}
W(\phi)=\int_0^{\infty}\int_{\R^d}\phi(t,x)W(dt,dx)
\end{equation*}
for $\phi \in \mathcal{H}$.
Notice that if $\phi$ and $\psi$ are in
$\mathcal{H}$, then
$\be \lc W(\phi)W(\psi)\rc =\langle\phi,\psi\rangle_{\mathcal{H}}$. Furthermore, $\mathcal{H}$  contains
the class of measurable functions $\phi$ on $\R_+\times
\R^d$  such that
\begin{equation}\label{abs1}
\int_{\R^2_+  \times\R^{2d}} |\phi(s,x)\phi(t,y)| \, \gamma(s-t)\laa(x-y) \, dxdydsdt <
\infty\,.
\end{equation}

\smallskip

We shall make a standard assumption on the spectral measure $\mu$, which will prevail until the end of the paper.
\begin{hypothesis}\label{hyp:mu}
The measure $\mu$ satisfies the following integrability condition:
\begin{equation} \label{mu1}
\int_{\R^{d}}\frac{\mu(d\xi)}{1+|\xi|^{2}} <\infty.
\end{equation}
\end{hypothesis}

 Let us now recall some of the main examples of stationary covariances, which will be our guiding examples in the remainder of the paper.

\begin{example}\label{ex:riesz-noise}
One of the most popular spatial  covariances is given by the
so-called Riesz kernel, for which $\laa(x)=|x|^{-\eta}$ and
$\mu(d\xi)=c_{\eta,d}|\xi|^{-(d-\eta)} \, d\xi$. We refer to this kind of noise as a spatial
$\eta$-Riesz noise. In this case, Hypothesis \ref{hyp:mu} is
satisfied whenever $0<\eta<2$, which will be our standing assumption.
   \end{example}

\begin{example}\label{ex:white-noise}
We shall also handle the space white noise case, namely
$\laa=\delta_{0}$ (notice that in this case $\laa$ is not a function
 but a measure)  and $\mu=\text{Lebesgue}$. This noise satisfies
Hypothesis~\ref{hyp:mu} only in dimension 1.
\end{example}

\begin{example}\label{ex:bessel-noise}
The spatial  covariance given by the so-called Bessel kernel is
defined by
\[
\laa(x)=\int_0^{\infty}w^{\frac{\eta-d}{2}}e^{-w}e^{-\frac{|x|^2}{4w}}dw\,.
\]
In this case
   $\mu(d\xi)=c_{\eta,d} (1+|\xi|^2)^{-\frac{\eta}{2}} d\xi$ and Hypothesis \ref{hyp:mu} is satisfied  if $\eta> d-2$.
\end{example}

\begin{example}\label{ex:fractional-noise in time}
An example of time covariance $\gamma$ that has received a lot of
attention is the case of a one-dimensional Riesz kernel, which
corresponds to the fractional Brownian motion. Suppose that
$\gamma(t)= H(2H-1) |t|^{2H-2}$ with $\frac{1}{2}< H < 1$ and  $W$
is a noise with this time covariance and a spatial  covariance
$\laa$. For any $t\ge 0$ and any $\varphi \in C^\infty_c (\R^d)$,
the function $\mathbf{1}_{[0,t]} \varphi$ belongs to the space
$\mathcal{H}$,  and we can define  $W_t(\varphi):=
W(\mathbf{1}_{[0,t]} \varphi)$.  Then,  for any fixed $  \varphi \in
\mathcal{D} (\R^d)$,  the stochastic process $\{c_{\varphi} ^{-1/2}W_t(\varphi); t\ge 0\}$ is a fractional Brownian motion with
Hurst parameter $H$, where
\[
c_\varphi= \int_{\R^d} |\cf \varphi(\xi) |^2 \mu(d\xi).
\]
That is $\be \lc W_t(\varphi)W_s(\varphi) \rc= R_H(s,t) c_\varphi$,  where 
for each $H\in (0,1)$ we have:
\[
R_H(s,t)=\frac{1}{2}\lp |s|^{2H}+|t|^{2H}-|s-t|^{2H}\rp.
\]
\end{example}

\begin{example}\label{ex:fractional-noise in space}
In the same way, the spatial  fractional covariance is given by
 $\laa(x)= \prod_{i=1}^d   H_i$ $(2H_{i}-2)|x_i|^{2H_i-2}$, where $\frac{1}{2}< H_i < 1$ for
$i=1,\dots, d$.   The Fourier transform of $\laa$ is $\mu(d\xi)=C_H
\prod_{i=1}^d |\xi_i|^{1-2H_i}d\xi$, where $C_H$ is a constant
depending on the parameters $H_i$. Then an easy calculation shows
that when $\sum_{i=1}^d H_i > d-1$,  Hypothesis \ref{hyp:mu} holds.
\end{example}

If $W$ is a noise with fractional space and time covariances, with
Hurst parameters $H_0$ in time, and  $H_1, \dots, H_d$ in space,
then we can write formally $W(\varphi)$ as the distributional integral $\int_{\R_+ \times \R^d}
\varphi(t,x) \dot{W}_H(t,x) dt dx$, where $\dot{W}_H(t,x)$ is the
formal partial derivative $\frac{\partial^{d+1}W_H}{\partial t
\partial x_1 \cdots \partial x_d}(t,x)$ and $W_H$ is centered
Gaussian random field which is a fractional Brownian motion on each
coordinate, that is,
\begin{equation*}
\be \lc W_H(s,x)W_H(t,y)\rc=R_{H_0}(s,t)\prod_{i=1}^d
R_{H_i}(x_i,y_i)\,, \quad s,t \geq 0, x, y \in \R^d\,.
\end{equation*}

\subsection{Time independent noise} \label{sec2.2}  In this case we consider a zero mean Gaussian family
$W=\{W(\varphi); \varphi \in \mathcal{D}(\R^d)\}$, defined in a complete probability space
$(\Omega,\mathcal{F},\bp)$, with covariance
\begin{equation}\label{cov2}
\be \lc W (\varphi) W (\psi)\rc=\int_{
\R^{2d}}\varphi(x)\psi(y) \laa (x-y) \, dxdy\,,
\end{equation}
where, as before, $\laa :
\R^d \rightarrow \R_+$  is a non-negative definite function whose Fourier transform $\mu$ is a tempered measure.
In this case  $\mathcal{H}$  is the completion of
$\mathcal{D}(\R^d)$
endowed with the inner product
\begin{equation}\label{innprod2}
\langle \varphi , \psi \rangle_{ \mathcal{H}} =\int_{\R^{2d}}
\varphi(x)\psi(y) \laa (x-y)dxdy= \int_{\R^d} \cf\varphi(\xi) \overline{\cf\psi(\xi)} \mu(d\xi).
\end{equation}
The mapping $\varphi \rightarrow W(\varphi)$ defined in $\mathcal{D}( \R^d)$  extends to a linear isometry between
$\mathcal{H}$  and the Gaussian space
spanned by $W$, denoted   by
\begin{equation*}
 W (\phi)=\int_{\R^d}\phi(x)W(dx)
\end{equation*}
for $\phi \in \mathcal{H}$.
If  $\phi$ and $\psi$ are in
$\mathcal{H}$, then
$\be \lc W(\phi)W(\psi)\rc =\langle\phi,\psi\rangle_{\mathcal{H}}$ and $\mathcal{H}$  contains
the class of measurable functions $\phi$ on  $\R^d$  such that
\begin{equation}\label{abs2}
\int_{\R^{2d}}|\phi(x)\phi(y)| \, \laa(x-y) \, dxdy < \infty\,.
\end{equation}

\subsection{Elements of Malliavin calculus}
Consider first the case of a time dependent noise.
We will denote by $D$ the derivative operator in the sense of
Malliavin calculus. That is, if $F$ is a smooth and cylindrical
random variable of the form
\begin{equation*}
F=f(W(\phi_1),\dots,W(\phi_n))\,,
\end{equation*}
with $\phi_i \in \mathcal{H}$, $f \in C^{\infty}_p (\R^n)$ (namely $f$ and all
its partial derivatives have polynomial growth), then $DF$ is the
$\mathcal{H}$-valued random variable defined by
\begin{equation*}
DF=\sum_{j=1}^n\frac{\partial f}{\partial
x_j}(W(\phi_1),\dots,W(\phi_n))\phi_j\,.
\end{equation*}
The operator $D$ is closable from $L^2(\Omega)$ into $L^2(\Omega;
\mathcal{H})$  and we define the Sobolev space $\mathbb{D}^{1,2}$ as
the closure of the space of smooth and cylindrical random variables
under the norm
\[
\|DF\|_{1,2}=\sqrt{\be[F^2]+\be[\|DF\|^2_{\mathcal{H}}]}\,.
\]
We denote by $\delta$ the adjoint of the derivative operator given
by the duality formula
\begin{equation}\label{dual}
\be \lc \delta (u)F \rc =\be \lc \langle DF,u
\rangle_{\mathcal{H}}\rc , 
\end{equation}
for any $F \in \mathbb{D}^{1,2}$ and any element $u \in L^2(\Omega;
\mathcal{H})$ in the domain of $\delta$. The operator $\delta$ is
also called the {\it Skorohod integral} because in the case of the
Brownian motion, it coincides with an extension of the It\^o
integral introduced by Skorohod. We refer to Nualart \cite{Nualart2}
for a detailed account of the Malliavin calculus with respect to a
Gaussian process. If $DF$ and $u$ are almost surely measurable
functions on $\R_+ \times \R^d$  verifying condition \eref{abs1},
then the duality formula~\eref{dual} can be written  using the
expression of the inner product in $\mathcal{H}$  given in
\eref{innprod1}
\begin{equation} \label{eq1}
\be \lc \delta(u)F \rc =
\be \lc \int_{\R^2_+\times\R^{2d}}D_{s,x}F \, u_{t,y} \, \gamma(s-t) \, \laa(x-y) \, dsdtdxdy \rc.
\end{equation}

\smallskip

Let us recall 3 other classical relations in stochastic analysis, which will be used in the
paper:

\smallskip

\noindent
\emph{(i) Divergence type formula.} For any $\phi \in \mathcal{H}$ and any random
variable $F$ in the Sobolev space $\mathbb{D}^{1,2}$, we have
\begin{equation}  \label{eq2}
FW(\phi)=\delta(F\phi)+\langle DF,
\phi\rangle_{\mathcal{H}}.
\end{equation}

\smallskip

\noindent
\emph{(ii) A duality relationship.}
Given a random variable $F\in \mathbb{D}^{2,2}$ and two elements $h,g \in \mathcal{H}$, the duality formula
(\ref{dual}) implies
\begin{equation} \label{k1}
\be \lc F \, W(h)W(g) \rc= \be \lc \langle D^2F, h\otimes g \rangle_{\mathcal{H}^{\otimes 2}} \rc
+ \be\lc F  \rc \, \langle h,g \rangle _{\mathcal{H}}.
\end{equation}

\smallskip

\noindent
\emph{(iii) Definition of the Wick product of a random and a Gaussian element.}
If $F\in \mathbb{D}^{1,2}$ and $h$ is an element of $%
\mathcal{H}$, then $Fh$ is Skorohod
integrable and, by definition, the
Wick product equals to the Skorohod integral of $Fh$
\begin{equation}
\delta (Fh)=F\diamond W(h).  \label{Wick}
\end{equation}%

\smallskip

When handling the stochastic heat equation in the Skorohod sense we will make use of chaos expansions, and we should give a small account on this notion.
For any integer $n\ge 0$ we denote by $\mathbf{H}_n$ the $n$th Wiener chaos of $W$. We recall that $\mathbf{H}_0$ is simply  $\R$ and for $n\ge 1$, $\mathbf {H}_n$ is the closed linear subspace of $L^2(\Omega)$ generated by the random variables $\{ H_n(W(h));h \in \mathcal{H}, \|h\|_{\mathcal{H}}=1 \}$, where $H_n$ is the $n$th Hermite polynomial.
For any $n\ge 1$, we denote by $\mathcal{H}^{\otimes n}$ (resp. $\mathcal{H}^{\odot n}$) the $n$th tensor product (resp. the $n$th  symmetric tensor product) of $\mathcal{H}$. Then, the mapping $I_n(h^{\otimes n})= H_n(W(h))$ can be extended to a linear isometry between    $\mathcal{H}^{\odot n}$ (equipped with the modified norm $\sqrt{n!}\| \cdot\|_{\mathcal{H}^{\otimes n}}$) and $\mathbf{H}_n$.

\smallskip

Consider now a random variable $F\in L^2(\Omega)$ measurable with respect to the $\sigma$-field  $\mathcal{F}^W$ generated by $W$. This random variable can be expressed as
\begin{equation}\label{eq:chaos-dcp}
F= \be \lc F\rc + \sum_{n=1} ^\infty I_n(f_n),
\end{equation}
where the series converges in $L^2(\Omega)$, and the elements $f_n \in \mathcal{H}^{\odot n}$, $n\ge 1$, are determined by $F$.  This identity is called the Wiener-chaos expansion of $F$.

\smallskip

The Skorohod integral (or divergence) of a random field $u$ can be
computed by  using the Wiener chaos expansion. More precisely,
suppose that $u=\{u_{t,x} ; (t,x) \in \R_+ \times\R^d\}$ is a random
field such that for each $(t,x)$, $u_{t,x}$ is an
$\mathcal{F}^W$-measurable and square integrable random  variable.
Then, for each $(t,x)$ we have a Wiener chaos expansion of the form
\begin{equation}  \label{exp1}
u_{t,x} = \be \lc u_{t,x} \rc + \sum_{n=1}^\infty I_n (f_n(\cdot,t,x)).
\end{equation}
Suppose also that
\[
\be \lc \int_0^\infty \int_0^\infty \int_{\R^{2d}}  |u_{t,x} \, u_{s,y}
| \, \gamma( s-t) \laa(x-y) \, dxdydsdt \rc <\infty.
\]
Then, we can interpret $u$ as a square  integrable
random function with values in $\mathcal{H}$ and the kernels $f_n$
in the expansion (\ref{exp1}) are functions in $\mathcal{H}
^{\otimes (n+1)}$ which are symmetric in the first $n$ variables. In
this situation, $u$ belongs to the domain of the divergence (that
is, $u$ is Skorohod integrable with respect to $W$) if and only if
the following series converges in $L^2(\Omega)$
\begin{equation}\label{eq:delta-u-chaos}
\delta(u)= \int_0 ^\infty \int_{\R^d}  u_{t,s} \, \delta W_{t,x} = W(\be[u]) + \sum_{n=1}^\infty I_{n+1} (\widetilde{f}_n(\cdot,t,x)),
\end{equation}
where $\widetilde{f}_n$ denotes the symmetrization of $f_n$ in all its $n+1$ variables.

\smallskip

The operators $D$ and $\delta$ can be introduced in a similar way in the time independent case.  If
 $DF$ and $u$ are almost surely measurable functions on $\mathbb{R}^d$ verifying condition~(\ref{abs2}), then formula
\eref{dual} can be written using the expression of the inner product
in $\mathcal{H}$  given in \eref{innprod2}:
\begin{equation}\label{eq:duality-time-indep}
\be \lc \delta(u)F\rc =\be \lc \int_{\R^{2d}}D_{x}F \, u(y) \, \laa (x-y) \, dxdy\rc\,.
\end{equation}

\section{Equation of Skorohod type}\label{sec:Skorohod}
The first part of this section is devoted to  the study of the
following $d$-dimensional stochastic heat equation  with the  time
dependent multiplicative Gaussian noise $W$ introduced in Section
\ref{sec2.1}, where the product is understood in the Wick sense (see
(\ref{Wick})):
\begin{equation}\label{eqSk1}
\frac{\partial u}{\partial t}=\frac{1}{2}\Delta u+u\diamond
\frac{\partial^{d+1}W}{\partial t \partial x_1 \cdots \partial
x_d}\, ,
\end{equation}
the initial condition being a continuous and bounded function
$u_0(x)$. This equation is formal and below  we provide a rigorous
definition of a mild solution   using the Skorohod integral. The
main objective of this section is to show that the  mild solution
exists and is unique in $L^2(\Omega)$, assuming that the spectral
measure $\mu$ satisfies Hypothesis \ref{hyp:mu}. This is proved by
showing that the formal Wiener chaos expansion which defines the
solution converges in $L^2(\Omega)$. In a second part of this
section we obtain a Feynman-Kac formula for the moments of the
solution. In the last part we will extend these results to the case
where the noise is time independent.

\subsection{Existence and uniqueness of a solution via chaos expansions}
\label{sec:ex-uniq-chaos}
Recall that we  denote by $p_{t}(x)$ the
$d$-dimensional heat kernel $p_{t}(x)=(2\pi
t)^{-d/2}e^{-|x|^2/2t} $, for any $t > 0$, $x \in \R^d$.
For each $t\ge 0$ let $\mathcal{F}_t$ be the $\sigma$-field generated by the random variables $W(\varphi)$, where
$\varphi$ has support in $[0,t ]\times \mathbb{R}^d$. We say that a random field $u_{t,x}$ is adapted if for each $(t,x)$ the random variable $u_{t,x}$ is $\mathcal{F}_t$-measurable.
We define the solution of equation  \eref{eqSk1} as follows.

\begin{definition}\label{def1}
An adapted   random field $u=\{u_{t,x}; t \geq 0, x \in
\mathbb{R}^d\}$ such that $\be [ u^2_{t,x}] < \infty$ for all $(t,x)$ is
a mild solution to equation \eref{eqSk1} with initial condition $u_0 \in C_b (\mathbb{R}^d)$, if for any $(t,x) \in [0,
\infty)\times \mathbb{R}^d$, the process $\{p_{t-s}(x-y)u_{s,y}{\bf
1}_{[0,t]}(s); s \geq 0, y \in \mathbb{R}^d\}$ is Skorohod
integrable, and the following equation holds
\begin{equation}\label{eq:sko-mild}
u_{t,x}=p_t u_0(x)+\int_0^t\int_{\mathbb{R}^d}p_{t-s}(x-y)u_{s,y} \, \delta
W_{s,y}.
\end{equation}
\end{definition}

Suppose now that $u=\{u_{t,x}; t\geq 0, x \in \R^d\}$ is a solution to equation \eref{eq:sko-mild}. Then according to \eref{eq:chaos-dcp}, for any fixed $(t,x)$ the random variable $u_{t,x}$ admits the following Wiener chaos expansion
\begin{equation}
u_{t,x}=\sum_{n=0}^{\infty}I_n(f_n(\cdot,t,x))\,,
\end{equation}
where for each $(t,x)$, $f_n(\cdot,t,x)$ is a symmetric element in
$\mathcal{H}^{\otimes n}$.
Thanks to \eqref{eq:delta-u-chaos} and using an iteration procedure, one can then find an
explicit formula for the kernels $f_n$ for $n \geq 1$
\begin{eqnarray*}
f_n(s_1,x_1,\dots,s_n,x_n,t,x)=\frac{1}{n!}p_{t-s_{\si(n)}}(x-x_{\si(n)})\cdots p_{s_{\si(2)}-s_{\si(1)}}(x_{\si(2)}-x_{\si(1)})
p_{s_{\si(1)}}u_0(x_{\si(1)})\,,
\end{eqnarray*}
where $\si$ denotes the permutation of $\{1,2,\dots,n\}$ such that $0<s_{\si(1)}<\cdots<s_{\si(n)}<t$
(see, for instance,  equation (4.4) in \cite{HN}, where this formula is established in the case of a noise which is white in space).
Then, to show the existence and uniqueness of the solution it suffices to show that for all $(t,x)$ we have
\begin{equation}\label{chaos}
\sum_{n=0}^{\infty}n!\|f_n(\cdot,t,x)\|^2_{\mathcal{H}^{\otimes n}}< \infty\,.
\end{equation}

\begin{theorem}\label{thmSk1}
Suppose that   $\mu$ satisfies Hypothesis   \ref{hyp:mu}
and $\gamma$ is locally integrable. Then relation (\ref{chaos}) holds
for each $(t,x)$. Consequently, equation \eref{eqSk1} admits a
unique mild solution in the sense of Definition \ref{def1}.
\end{theorem}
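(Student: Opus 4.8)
The statement has two parts, but the second is a quick consequence of the first: the explicit kernels $f_n$ written just above are already forced by the Picard-type iteration, so once the series $\sum_n n!\|f_n(\cdot,t,x)\|^2_{\mathcal H^{\otimes n}}$ is shown to converge, the field $u_{t,x}=\sum_n I_n(f_n(\cdot,t,x))$ is a well-defined element of $L^2(\Omega)$, and the fact that the chaos coefficients are uniquely determined yields uniqueness of the mild solution. I would therefore devote essentially all the work to the $L^2$ bound \eqref{chaos}.

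The plan is to pass to the spatial Fourier transform. Since $f_n$ carries the prefactor $1/n!$ and, on the region $s_1<\cdots<s_n$, reduces to the telescoping product $p_{t-s_n}(x-x_n)\cdots p_{s_2-s_1}(x_2-x_1)\,p_{s_1}u_0(x_1)$, I first bound the initial-data factor in physical space by $p_{s_1}u_0(x_1)\le\|u_0\|_\infty$. The remaining product of $n$ heat kernels then Fourier-transforms, using $\cf p_\tau(\xi)=e^{-\tau|\xi|^2/2}$, into the Gaussian weight $\exp\big(-\tfrac12\sum_{j=1}^{n}(s_{j+1}-s_j)\,|\xi_1+\cdots+\xi_j|^2\big)$ with $s_{n+1}=t$. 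Invoking the symmetry of $f_n$ and the second expression for the inner product in \eqref{innprod1}, I rewrite $n!\|f_n(\cdot,t,x)\|^2_{\mathcal H^{\otimes n}}$ as an integral over two families of times $\mathbf s,\mathbf r\in[0,t]^n$ and frequencies $\xi_1,\dots,\xi_n\in\R^d$, weighted by $\prod_{j}\gamma(s_j-r_j)$ and $\prod_j\mu(d\xi_j)$, with the two telescoping Gaussians attached to the $\mathbf s$- and $\mathbf r$-orderings.

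The heart of the matter is this time--frequency integral, and both hypotheses enter here. On the frequency side, the elementary inequality $\int_0^{t}e^{-s|\xi|^2}\,ds\le C_t/(1+|\xi|^2)$ turns the Gaussian weights attached to the time gaps into factors of exactly the type controlled by \eqref{mu1}; this is the one and only place where Hypothesis~\ref{hyp:mu} is used. On the time side, local integrability of $\gamma$ gives $\int_{-t}^{t}|\gamma(\tau)|\,d\tau<\infty$, which is precisely what is needed to integrate out the correlations $\gamma(s_j-r_j)$. Performing these reductions one level at a time, while using the ordered-simplex structure to generate a factorial gain, I expect to arrive at a bound of the form
\[
n!\,\|f_n(\cdot,t,x)\|^2_{\mathcal H^{\otimes n}}\le \frac{(C_t)^n}{(n!)^{\beta}}\,\|u_0\|_\infty^2
\]
for some $\beta>0$, which is summable in $n$ and gives \eqref{chaos}.

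The main obstacle is the coupling that the colored-in-time, colored-in-space structure introduces: unlike the white case $\gamma=\delta_0$, the weight $\gamma(s_j-r_j)$ ties the simplex of the $\mathbf s$-variables to that of the $\mathbf r$-variables, while the shared frequencies enter each copy only through the partial sums $\xi_1+\cdots+\xi_j$. Consequently neither the time integral nor the frequency integral factorizes, and a naive level-by-level bound fails because $\sup_\eta\int \mu(d\xi)/(1+|\xi+\eta|^2)$ need not be finite. I would resolve this by exploiting the nonnegativity of $\gamma$ and $\laa$ to estimate the cross terms and by integrating successively from the outermost time gap and frequency inward, so that each elimination step produces one factor governed by \eqref{mu1} and one by $\int_{-t}^{t}|\gamma|$, the ordered simplex supplying the decisive factorial. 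The analogous computation for the space--white case is carried out in \cite{HN}, and I would follow that scheme while carrying along the general $\gamma$ and $\mu$.
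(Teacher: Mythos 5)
Your overall strategy (bounding the $u_0$ factor by $\|u_0\|_\infty$, passing to the spatial Fourier transform to get the telescoping Gaussian weight, using the nonnegativity of $\laa$ to replace $\sup_\eta\int e^{-w|\xi+\eta|^2}\mu(d\xi)$ by $\int e^{-w|\xi|^2}\mu(d\xi)$, and using $\int_{-t}^t\gamma<\infty$ to decouple the $\mathbf{s}$- and $\mathbf{r}$-variables) is the paper's route. But there is a genuine gap at the very last step, where you claim the ordered simplex will supply ``the decisive factorial'' and yield a bound of the form $C_t^n/(n!)^{\beta}\|u_0\|_\infty^2$. Under Hypothesis \ref{hyp:mu} alone no such factorial decay is available. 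After all reductions one is left with $\int_{S_{t,n}}\prod_{i=1}^n h(w_i)\,dw$ where $h(w)=\int_{\R^d}e^{-w|\xi|^2}\mu(d\xi)$, and condition \eref{mu1} only guarantees $h\in L^1([0,t])$; an $L^1$ function can concentrate near $w=0$ (think of $h=M\mathbf{1}_{[0,1/M]}$) in such a way that the simplex constraint $\sum w_i\le t$ gives no gain whatsoever, so the best uniform bound is the purely geometric $(\int_0^t h)^n$. Factorial decay of the type you expect does hold under the \emph{stronger} integrability $\int\mu(d\xi)/(1+|\xi|^{2-2\beta})<\infty$ (this is exactly what is exploited in Section \ref{sec:eq-strato}), but not under Dalang's condition.

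The missing idea is the low/high frequency splitting of Lemma \ref{lem1}: one fixes $N$, sets $D_N=\mu\{|\xi|\le N\}$ and $C_N=\int_{|\xi|\ge N}|\xi|^{-2}\mu(d\xi)$, and bounds the simplex integral by $\sum_{k=0}^n\binom{n}{k}\frac{t^k}{k!}D_N^k(2C_N)^{n-k}$ as in \eqref{eq:eqDD}. The factorial $1/k!$ comes only from the low-frequency factors (where the Gaussian weight is bounded by $1$ and the simplex volume is $t^k/k!$); the high-frequency factors contribute the purely geometric $(2C_N)^{n-k}$ with no factorial, and the series over $n$ converges only because $C_N\to0$ as $N\to\infty$ (a consequence of \eref{mu1} and dominated convergence), so that $N$ can be chosen a posteriori to make $2CC_N<1$ with $C=2\int_0^t\gamma$. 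This smallness argument, rather than any factorial gain, is what closes the proof, and your plan as written does not reach it. A second, more minor point: the decoupling of $\prod_i\gamma(s_i-r_i)$ is not automatic from nonnegativity alone; the paper first applies Cauchy--Schwarz in the $\xi$-variables and then $ab\le\frac12(a^2+b^2)$ before integrating out the $\mathbf{r}$-variables, and you should make that step explicit.
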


\begin{proof}
Fix $t>0$ and $x\in \mathbb{R}^d$. Set
$f_n(s,y,t,x)=f_n(s_1,y_1,\dots,s_n,y_n,t,x)$.    We have the
following expression
\begin{eqnarray*}
&&n!\|f_n(\cdot,t,x)\|^2_{\mathcal{H}^{\otimes n}}\\
&=& n!
\int_{[0,t]^{2n}}\int_{\R^{2nd}}f_n(s,y,t,x)f_n(r,z,t,x)\prod_{i=1}^n
\laa(y_i-z_i)\prod_{i=1}^n \gamma(s_i-r_i) \, dydzdsdr\\
&\leq& n!
\|u_0\|^2_{\infty} \int_{[0,t]^{2n}}\int_{\R^{2nd}} 
g_n(s,y,t,x)g_n(r,z,t,x)\prod_{i=1}^n
\laa(y_i-z_i)\prod_{i=1}^n \gamma(s_i-r_i) \, dydzdsdr
\end{eqnarray*}
where $dx=dx_1 \cdots dx_n$, the differentials $dy, ds$ and $ dr$ are defined
similarly and
\begin{equation}\label{eq:def-gn}
g_n(s,y,t,x)=\frac{1}{n!}p_{t-s_{\sigma(n)}}(x-y_{\sigma(n)})\cdots
p_{s_{\sigma(2)}-s_{\sigma(1)}}(y_{\sigma(2)}-y_{\sigma(1)})\,.
\end{equation}
Set now $\mu(d\xi)\equiv\prod_{i=1}^n \mu(d\xi_i)$.
Using the Fourier transform and Cauchy-Schwarz,  we obtain
\begin{eqnarray*}   
&&n!\|f_n(\cdot,t,x)\|^2_{\mathcal{H}^{\otimes n}}   \\  
 &\leq &
n! \|u_0\|^2_{\infty}\int_{[0,t]^{2n}} \int_{\R^{nd}}
\mathcal{F}g_n(s,\cdot,t,x)(\xi)
\overline{\mathcal{F}g_n(r,\cdot,t,x)(\xi)}  \mu(d\xi)\prod_{i=1}^n
\gamma(s_i-r_i)dsdr   \\ 
\notag &\leq&n! \|u_0\|^2_{\infty}
\int_{[0,t]^{2n}}\left (\int_{\R^{nd}}
|\mathcal{F}g_n(s,\cdot,t,x)(\xi)|^2\mu(d\xi)\right)^{\frac{1}{2}} 
\left(\int_{\R^{nd}}|\mathcal{F}g_n(r,\cdot,t,x)(\xi)|^2\mu(d\xi)\right)^{\frac{1}{2}}  \\
&& \hspace{4.5in}\times \prod_{i=1}^n \gamma(s_i-r_i)ds dr,   \\
\end{eqnarray*}
and thus, thanks to the basic
inequality $ab\leq \frac{1}{2}(a^2+b^2)$
  and the fact that $\gamma$ is locally integrable, this yields:
\begin{eqnarray}   
n!\|f_n(\cdot,t,x)\|^2_{\mathcal{H}^{\otimes n}} &\leq &   
n! \|u_0\|^2_{\infty}
\int_{[0,t]^{2n}}\int_{\R^{nd}}  |\mathcal{F}g_n(s,\cdot,t,x)(\xi)|^2\mu(d\xi) \prod_{i=1}^n \gamma(s_i-r_i)ds dr  \notag \\
&\le&  C^nn! \|u_0\|^2_{\infty}  \label{eq3}
\int_{[0,t]^{2n}}\int_{\R^{nd}}  |\mathcal{F}g_n(s,\cdot,t,x)(\xi)|^2\mu(d\xi)  ds,
\end{eqnarray}
where $C=2 \int_0^t \gamma(r)dr$.
Furthermore, it is readily checked from expression \eqref{eq:def-gn} that there exists a constant $C>0$ such that the Fourier transform of $g_n$ satisfies
\[
|\mathcal{F}g_n(s,\cdot,t,x)(\xi)|^2=\frac{C^n}{(n!)^2}
\prod_{i=1}^n e^{-(s_{\si(i+1)}-s_{\si(i)})|\xi_{\si(i)}+\cdots +
\xi_{\si(1)}|^2},
\]
where we have set $s_{\si(n+1)}=t$.  As a consequence,
\begin{eqnarray}   \notag
&&(n!)^2 \int_{\R^{nd}}|\mathcal{F}g_n(s,\cdot,t,x)(\xi)|^2\mu(d\xi)\\
\notag &\leq&  C^n
 \prod_{i=1}^n \sup_{\eta \in
\R^d}\int_{\R^d}e^{-(s_{\sigma(i+1)}-s_{\sigma(i)})|\xi_{\sigma(i)}+\eta|^2}\mu(d\xi_{\sigma(i)})\\
\notag &=&C^n  \prod_{i=1}^n \sup_{\eta \in \R^d} \left|
\int_{\R^d}\frac{e^{-\frac{|x_{\si(i)}|^2}{4(s_{\sigma(i+1)}-s_{\sigma(i)})}}}{(4\pi
(s_{\si(i+1)}-s_{\si(i)}))^{\frac{d}{2}}} \, 
e^{\imath x_{\si(i)}\cdot \eta} \, \laa(x_{\si(i)})dx_{\si(i)}\right|\\
\label{eq4} &\leq&  C^n
 \prod_{i=1}^n \int_{\R^d}e^{-(s_{\sigma(i+1)}-s_{\sigma(i)})|\xi_{\sigma(i)}|^2}\mu(d\xi_{\sigma(i)}),
 \end{eqnarray}
 where we invoke the fact that $|e^{\imath x_{\si(i)}\cdot \eta}|=1$ to get rid of the supremum in $\eta$. Therefore, from relations~(\ref{eq3}) and (\ref{eq4}) we obtain
\begin{equation}
n!\|f_n(\cdot,t,x)\|^2_{\mathcal{H}^{\otimes n}} \leq \|u_0\|^2_{\infty} C^n \int_{\R^{nd}}  \int_{T_n(t)}\prod_{i=1}^n e^{-(s_{i+1}-s_i)|\xi_i|^2} \, ds \, \mu(d\xi)\,,
\end{equation}
where we denote by $T_n(t)$ the simplex
\begin{equation} \label{simplex}
 T_n(t) =\{0<s_1<\cdots <s_n<t\}.
 \end{equation}
Let us now estimate the right hand side of \eref{eq4}:
making the change of variables $s_{i+1}-s_{i}=w_i$ for $1\leq i \leq n-1$, and $t-s_n=w_n$,  and denoting $dw=dw_1 dw_2 \cdots dw_n$, we end up with
\begin{eqnarray*}
 n!\|f_n(\cdot,t,x)\|^2_{\mathcal{H}^{\otimes n}}  \leq \|u_0\|^2_{\infty} C^n \int_{\R^{nd}}\int_{S_{t,n}}e^{- \sum_{i=1}^n w_i |\xi_i|^2}
 dw \prod_{i=1}^n \mu(d\xi_i)\,,
\end{eqnarray*}
where $S_{t,n}=\{(w_1, \dots, w_n)\in [0,  \infty)^n: w_1 +\cdots +w_n \le t\}$. 
We also split the contribution of $\mu$ in the following way: 
fix $N\ge 1$ and set
\begin{equation}  \label{eqD}
C_N=\int_{|\xi|\geq N}\frac{\mu(d\xi)}{|\xi|^2},
\quad\text{and}\quad
D_N=\mu\{\xi\in \R^d: |\xi|\leq N\}.
\end{equation}
By Lemma \ref{lem1} below, we can write
\begin{equation}\label{eq:eqDD}
 n!\|f_n(\cdot,t,x)\|^2_{\mathcal{H}^{\otimes n}}  \leq     \|u_0\|^2_{\infty}
 C^n \sum_{k=0}^n {n \choose k} \frac{t^k}{k!}D_N^k
(2C_N)^{n-k}\,.
\end{equation}
  Next we choose a sufficiently large $N$ such that
$2CC_N < 1$, which is possible  because of condition  \eref{mu1}. Using the
inequality ${n \choose k } \leq 2^n$ for any positive integers $n$
and $0\leq k \leq n$,  we have
\begin{align*}
&\sum_{n=0}^{\infty}n!\|f_n(\cdot,t,x)\|^2_{\mathcal{H}^{\otimes n}}
\leq
\|u_0\|^2_{\infty} \sum_{n=0}^{\infty}C^n \sum_{k=0}^n {n \choose k}\frac {t^k}{k!}D_N^k (2C_N)^{n-k}\\
&\leq \|u_0\|^2_{\infty} \sum_{k=0}^{\infty}\sum_{n=k}^{\infty}C^n 2^n \frac{t^k}{k!}D_N^k (2C_N)^{n-k}
=
\|u_0\|^2_{\infty} \sum_{k=0}^{\infty}\frac{t^k}{k!}D_N^k (2C_N)^{-k}\sum_{n=k}^{\infty}(2CC_N)^n\\
&\leq
\|u_0\|^2_{\infty} \frac{1}{1-2C C_N} \sum_{k=0}^{\infty}\frac{t^k D_N^k
(2C_N)^{-k}(2CC_N)^k}{k!}< \infty\,.
\end{align*}
This proves the theorem.
\end{proof}

Next we establish the lemma that is used in the proof of Theorem \ref{thmSk1}.
\begin{lemma}\label{lem1}
Let $\mu$ satisfy the condition \eref{mu1}. For any $N
> 0$ we let $D_N$ and $C_N$ be given by (\ref{eqD}). Then
we have
\[
 \int_{\R^{nd}}\int_{S_{t,n}}e^{- \sum_{i=1}^n w_i |\xi_i|^2}
 dw \prod_{i=1}^n \mu(d\xi_i)
\leq\sum_{k=0}^n {n \choose k}
\frac{t^k}{k!}D_N^k (2C_N)^{n-k}\,.
\]

\end{lemma}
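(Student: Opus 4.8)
The plan is to exploit the product structure of the integrand once the frequency variables have been summed, and then split each frequency integral according to whether $|\xi_i|$ is small or large. Concretely, I would first introduce the one-variable function
\[
\phi(w) = \int_{\R^d} e^{-w|\xi|^2}\,\mu(d\xi), \qquad w>0,
\]
so that, after a Tonelli interchange (legitimate since everything is nonnegative), the left-hand side becomes the simplex integral $\int_{S_{t,n}} \prod_{i=1}^n \phi(w_i)\,dw$. The whole point of this reduction is that the $\xi$-dependence is now summarized in $\phi$, and the only coupling left between the $n$ coordinates is the constraint $\sum_i w_i \le t$ defining $S_{t,n}$.

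Next I would split $\phi = \phi_{\le} + \phi_{>}$, where $\phi_{\le}(w) = \int_{|\xi|\le N} e^{-w|\xi|^2}\mu(d\xi)$ and $\phi_{>}(w) = \int_{|\xi|>N} e^{-w|\xi|^2}\mu(d\xi)$. Two elementary facts drive the estimate: first, $\phi_{\le}(w) \le \mu\{|\xi|\le N\} = D_N$ uniformly in $w$, by bounding the exponential by $1$; second, $\int_0^\infty \phi_{>}(w)\,dw = \int_{|\xi|>N}|\xi|^{-2}\mu(d\xi) \le C_N$, via Tonelli and $\int_0^\infty e^{-w a}\,dw = 1/a$ (legitimate because $|\xi|>N>0$ forces $a=|\xi|^2>0$). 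Expanding $\prod_i(\phi_{\le}(w_i)+\phi_{>}(w_i))$ produces $2^n$ terms indexed by the subset $A\subseteq\{1,\dots,n\}$ of coordinates carrying a $\phi_{\le}$ factor.

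For a fixed $A$ with $|A|=k$ I would bound each low-frequency factor by $D_N$, pulling out $D_N^k$, and then treat $\int_{S_{t,n}}\prod_{j\notin A}\phi_{>}(w_j)\,dw$ by integrating out the $k$ variables $(w_i)_{i\in A}$ first: for frozen high-frequency variables these range over a $k$-dimensional simplex of side $t-\sum_{j\notin A}w_j \le t$, contributing at most $t^k/k!$. Enlarging the domain of the remaining $n-k$ high-frequency variables from $\{\sum w_j \le t\}$ to $[0,\infty)^{n-k}$ (licit since $\phi_{>}\ge 0$) decouples them and gives $\prod_{j\notin A}\int_0^\infty \phi_{>}(w_j)\,dw_j \le C_N^{n-k}$. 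Summing over the $\binom{n}{k}$ subsets of each size $k$ yields
\[
\int_{\R^{nd}}\int_{S_{t,n}}e^{-\sum_i w_i|\xi_i|^2}\,dw\prod_{i=1}^n\mu(d\xi_i) \le \sum_{k=0}^n \binom{n}{k}\frac{t^k}{k!}D_N^k C_N^{n-k},
\]
which is even sharper than the asserted bound, since $C_N^{n-k}\le(2C_N)^{n-k}$; the factor $2$ in the statement is comfortable slack.

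The main obstacle is precisely the middle step, the decoupling of the simplex constraint: the estimate is clean only because one integrates out the low-frequency $w$'s against Lebesgue measure first (producing $t^k/k!$ after the crude bound $t-\sum_{j\notin A}w_j\le t$) and extends the high-frequency $w$'s to the half-line afterwards; reversing this order, or insisting on the exact domain $\{\sum w_j\le t\}$ for the high-frequency variables, entangles the factors. As an alternative I would mention an induction on $n$: peeling off $(w_n,\xi_n)$ through $S_{t,n}=\{w_n\in[0,t],\,(w_1,\dots,w_{n-1})\in S_{t-w_n,n-1}\}$ and using monotonicity in $s$ of $J_{n-1}(s):=\int_{\R^{(n-1)d}}\int_{S_{s,n-1}}e^{-\sum_{i<n}w_i|\xi_i|^2}\,dw'\prod_{i<n}\mu(d\xi_i)$ gives the recursion $J_n(t)\le D_N\int_0^t J_{n-1}(s)\,ds + C_N J_{n-1}(t)$, whose solution with $J_0\equiv 1$ is exactly $\sum_{k}\binom{n}{k}\frac{t^k}{k!}D_N^kC_N^{n-k}$ by Pascal's rule.
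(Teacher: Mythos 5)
Your proof is correct and follows essentially the same route as the paper: split each $\mu$-integral at radius $N$, expand the product over subsets, bound the low-frequency factors trivially to produce $t^k D_N^k/k!$ from the simplex, and integrate out the $w$-variables on the high-frequency factors to produce a power of $C_N$. The only cosmetic differences are that you extend the high-frequency $w$-integrals to $[0,\infty)$ rather than $[0,t]$ and thereby obtain the slightly sharper constant $C_N^{n-k}$ in place of $(2C_N)^{n-k}$, confirming that the factor $2$ in the statement is indeed slack.
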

\begin{proof}
By our assumption \eref{mu1}$, C_N$ is finite for all
positive $N$. Let $I$ be a subset of $\{1,2,\dots,n\}$ and  $I^{c}
=\{1,2,\dots,n\}\setminus I$. Then we have
\begin{eqnarray*}
&&\int_{\R^{nd}}\int_{S_{t,n}}\prod_{i=1}^n e^{-w_i
|\xi_i|^2} \, dw \, \mu(d\xi)\\
&=&  \int_{\R^{nd}} \int_{ S_{t,n}}\prod_{i=1}^n e^{-w_i|\xi_i|^2}({\bf 1}_{\{|\xi_i|\leq N\}}+ {\bf 1}_{\{|\xi_i|> N\}})\, dw \, \mu(d\xi)\\
&=& \sum_{I \subset \{1,2,\dots,
n\}}\int_{\R^{nd}}\int_{ S_{t,n}}\prod_{i \in I} e^{-w_i
|\xi_i|^2}{\bf 1}_{\{|\xi_i|\leq N\}}\times \prod_{j \in I^{c}}
 e^{-w_j |\xi_j|^2} {\bf 1}_{\{|\xi_j|\geq N\}} \, dw \, \mu(d\xi).
 \end{eqnarray*}
 For the indices $i$ in the set $I$ we estimate $e^{-w_j |\xi_j|^2}$ by $1$. Then, using the inclusion
 \[
 S_{t,n}\subset S^I _{t} \times S^{I^c}_{t},
 \]
 where $S^I_{t} =\{(w_i ,i\in I): w_i\ge 0, \sum_{i\in I} w_i \le t\}$ and $S^{I^c}_{t} =\{(w_i ,i\in I^c): w_i\ge 0, \sum_{i\in I^c} w_i \le t\}$ we obtain
 \begin{eqnarray*}
&&\int_{\R^{nd}}\int_{S_{t,n}}\prod_{i=1}^n e^{-w_i
|\xi_i|^2}\, dw \, \mu(d\xi) \\
&\leq& \sum_{I \subset \{1,2,\cdots, n\}}\int_{\R^{nd}}\int_{ S^I_t\times S_t^{I^c}}\prod_{i \in I} {\bf 1}_{\{|\xi_i|\leq
N\}}\times \prod_{j \in I^{c}}
e^{-w_j |\xi_j|^2} {\bf 1}_{\{|\xi_j|\geq N\}} \, dw \, \mu(d\xi) .
\end{eqnarray*}
Furthermore, one can bound the integral over $S_t^{I^c}$ in the following way
\begin{equation*}
\int_{S_t^{I^c}} \prod_{j \in I^{c}} e^{-w_j |\xi_j|^2}  \, dw
\le
\int_{[0,t]^{I^c}} \prod_{j \in I^{c}} e^{-w_j |\xi_j|^2} \, dw
=
\prod_{j \in I^{c}} \frac{1-e^{-t |\xi_{j}|^{2}}}{|\xi_{j}|^{2}}
\le
\prod_{j \in I^{c}} \frac{1}{|\xi_{j}|^{2}}.
\end{equation*}
We can thus bound $\int_{\R^{nd}}\int_{S_{t,n}}\prod_{i=1}^n e^{-w_i |\xi_i|^2}\, dw \, \mu(d\xi)$ by:
\begin{multline*}
\sum_{I \subset \{1,2,\cdots,n\}}\frac{t^{|I|}}{|I|!}\big(\mu\{\xi\in \R^d: |\xi|\leq N\}\big)^{|I|}2^{|I^{c}|}\int_{|\xi_j|>N, \forall j \in I^{c}}\prod_{j \in I^{c}}\frac{\mu(d\xi_j)}{|\xi_j|^2}\\
= \sum_{I \subset
\{1,2,\cdots,n\}}\frac{t^{|I|}}{|I|!}D_N^{|I|}(2C_N)^{|I^{c}|}=
\sum_{k=0}^n {n \choose k} \frac{t^k}{k!}D_N^k (2C_N)^{n-k}\,,
\end{multline*}
which is our claim.
\end{proof}

\subsection{Feynman-Kac formula for the moments}\label{sec:FK-moments}
Our next objective is to find a formula  for the moments of the mild solution to equation \eref{eqSk1}.
For any $\delta > 0$, we define the function
$\varphi_{\delta}(t)=\frac{1}{\delta}{\bf 1}_{[0,\delta]}(t)$ for $t
\in \R$.  Then, $\varphi_{\delta}(t)p_{\varepsilon}(x)$  provides an
approximation of the Dirac delta function $\delta_0(t,x)$ as
$\varepsilon$ and $\delta$ tend to zero.

We set
\begin{equation}\label{regW}
\dot{W}^{\varepsilon,\delta}_{t,x}=\int_0^t
\int_{\R^d}\varphi_{\delta}(t-s)p_{\varepsilon}(x-y)W(ds,dy)\,.
\end{equation}
Now we consider the approximation of equation \eref{eqSk1} defined
by
\begin{equation}\label{approx}
\frac{\partial u_{t,x}^{\varepsilon,\delta}}{\partial t}=\frac{1}{2}\Delta u_{t,x}^{\varepsilon,\delta}+u_{t,x}^{\varepsilon,\delta}\diamond \dot{W}_{t,x}^{\varepsilon,\delta}\,.
\end{equation}

We recall that the Wick product $u_{t,x}^{{\varepsilon ,\delta }}\diamond
\dot{W}_{t,x}^{{\varepsilon ,\delta }}$ is well defined as a \ square
integrable random variable provided the random variable $u_{t,x}^{{%
\varepsilon ,\delta }}$ belongs to the space $\mathbb{D}^{1,2}$ (see (\ref%
{Wick})), and in this case we have%
\begin{equation}
u_{s,y}^{\varepsilon ,\delta }\diamond \dot{W}_{s,y}^{{\varepsilon ,\delta }%
}=\int_{0}^{s}\int_{\mathbb{R}^d}\varphi _{\delta }(s-r)p_{{\varepsilon }%
}(y-z)u_{s,y}^{\varepsilon ,\delta }\delta W_{r,z}.  \label{eq5}
\end{equation}%
Furthermore, the mild or evolution version of  (\ref{approx})  is
\begin{equation}
u_{t,x}^{\varepsilon ,\delta }=p_tu_0(x)+\int_{0}^{t}\int_{\mathbb{R}%
^d}p_{t-s}(x-y)u_{s,y}^{\varepsilon ,\delta }\diamond \dot{W}_{s,y}^{{%
\varepsilon ,\delta }}dsdy.  \label{eq6}
\end{equation}%
Substituting (\ref{eq5}) into (\ref{eq6}), and formally applying Fubini's
theorem yields
\begin{equation}
u_{t,x}^{\varepsilon ,\delta }=p_tu_0(x)+\int_{0}^{t}\int_{\mathbb{R}%
^d}\left( \int_{0}^{t}\int_{\mathbb{R}^d}p_{t-s}(x-y)\varphi _{\delta
}(s-r)p_{{\varepsilon }}(y-z)u_{s,y}^{\varepsilon ,\delta }dsdy\right)
\delta W_{r,z}.  \label{eq7}
\end{equation}
This leads to the following definition.

\begin{definition}  \label{def2}
An adapted random field $u^{\varepsilon ,\delta }=\{u_{t,x}^{{\varepsilon
,\delta }};  t\geq 0,x\in \mathbb{R}^{d}\}$ is a mild solution to equation (%
\ref{approx}) if for each $(r,z)\in  [0,t]\times \mathbb{R}^d$ the
integral
\begin{equation*}
Y_{r,z}^{t,x}= \int_{0}^{t}\int_{\mathbb{R}^d}p_{t-s}(x-y)\varphi
_{\delta }(s-r)p_{{\varepsilon }}(y-z)u_{s,y}^{\varepsilon ,\delta
}dsdy
\end{equation*}%
exists and $Y^{t,x}$ is a Skorohod integrable process such that (\ref{eq7})
holds for each $(t,x)$.
\end{definition}

Notice that according to relation \eqref{dual}, the above definition is equivalent to saying that $u_{t,x}^{\varepsilon
,\delta }\in L^{2}(\Omega )$, and for any random variable $F\in \mathbb{D}%
^{1,2}$ , we have%
\begin{equation}
 \be \lc Fu_{t,x}^{\varepsilon ,\delta }\rc  =\be \lc F\rc p_tu_0(x)+\be \lc \langle  Y^{t,x},DF\rangle _{\mathcal{H}}\rc.  \label{eq8}
\end{equation}%

In order to derive a Feynman-Kac formula for the moment of order $k\ge 2$  of the solution to equation (\ref{eqSk1}) we need to introduce $k$  independent $d$-dimensional Brownian motions $B^j$, $j=1,\dots, k$, which are independent of the noise $W$ driving the equation. We shall study the probabilistic behavior of some random variables with double randomness, and we thus introduce some additional notation:
\begin{notation}
We denote by $\bp,\be$ the probability and expectation with respect
to the annealed randomness concerning the couple $(B,W)$, where
$B=(B^1, \dots, B^k)$, while we set respectively  $\be_{B}$ and
$\be_{W}$ for the expectation with respect to one randomness only.
\end{notation}

With this notation in mind, define
\begin{equation}\label{eq9}
u_{t,x}^{\varepsilon,\delta}=\be_B \lc \exp \lp  W (
A_{t,x}^{\varepsilon,\delta})-\frac{1}{2}\alpha^{\varepsilon,\delta}_{t,x}\rp
\rc\,,
\end{equation}
where
\begin{equation}  \label{m3}
A_{t,x}^{\varepsilon,\delta}(r,y)=\frac 1\delta
\left(\int_0^{\delta \wedge (t-r)}
p_{\varepsilon}(B_{t-r-s}^x-y)ds\right) \mathbf{1}_{[0,t]} (r),
\quad\text{and}\quad
\alpha^{\varepsilon,\delta}_{t,x}=\|A^{\varepsilon,\delta}_{t,x}\|^2_{\mathcal{H}},
\end{equation}
for  a standard $d$-dimensional Brownian motion $B$ independent of
$W$. Then one can prove that $u_{t,x}^{\varepsilon,\delta}$ is a
mild solution to equation (\ref{approx}) in the sense of Definition
\ref{def2}. The proof is similar to the proof of Proposition 5.2 in
\cite{HN} and we omit the details.

The next theorem asserts that the random variables
$u_{t,x}^{\varepsilon,\delta}$ have moments of all orders, uniformly
bounded in $\varepsilon$ and $\delta$, and converge to the mild
solution of equation \eref{eqSk1}, which is unique by Theorem
\ref{thmSk1}, as $\delta$ and $\varepsilon$ tend to zero. Moreover,
it provides an expression for the moments of the mild solution of
equation \eref{eqSk1}.

\begin{theorem}\label{thmmom1}
Suppose $\gamma$ is locally integrable and $\mu$ satisfies
Hypothesis  \ref{hyp:mu}.  Then for any integer $k \geq 1$ we have
\begin{equation}\label{eq10}
\sup_{\varepsilon,\delta}\be \lc |u_{t,x}^{\varepsilon,\delta}|^k\rc< \infty\,,
\end{equation}
the limit $\lim_{\varepsilon \downarrow 0}\lim_{\delta
\downarrow 0} u_{t,x}^{\varepsilon,\delta}$ exists in $L^p$ for all
$p \geq 1$, and it coincides with the mild solution $u$ of equation
\eref{eqSk1}. Furthermore, we have for any  integer $k \geq 2$
\begin{equation}\label{momSk1}
\be \lc u_{t,x}^k\rc =\be_B \lc \prod_{i=1}^k u_0(B_t^i+x) \exp\left(\sum_{1 \leq i < j
\leq k}\int_0^t \int_0^t \gamma (s-r)\laa(B_s^i-B_r^j)ds
dr\right)\rc\,,
\end{equation}
where $\{B^j; \, j=1,\dots, k\}$  is a family of $d$-dimensional  independent standard Brownian motions independent of $W$.
\end{theorem}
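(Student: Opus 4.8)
The plan is to establish the three assertions of Theorem~\ref{thmmom1} by working with the explicit approximation $u_{t,x}^{\ep,\del}$ given in \eref{eq9} and passing to the limit. The starting point is the observation that, conditionally on the Brownian motions $B^1,\dots,B^k$, the quantity $\prod_{i=1}^k u_{t,x}^{\ep,\del,(i)}$ is (up to the normalizing $\alpha$-terms) an exponential of a centered Gaussian functional of $W$. Indeed, writing each factor as $\be_{B^i}[\exp(W(A^{\ep,\del,(i)}_{t,x}) - \frac12 \alpha^{\ep,\del}_{t,x})]$ and using independence of the $B^i$, I would first compute $\be_W$ of the product by the elementary Gaussian identity $\be_W[\exp(\sum_i W(A_i))] = \exp(\frac12 \|\sum_i A_i\|^2_{\ch})$. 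The cross terms $\langle A^{\ep,\del,(i)}_{t,x}, A^{\ep,\del,(j)}_{t,x}\rangle_{\ch}$ for $i<j$ survive after cancelling the diagonal $\frac12\alpha$-corrections, which yields
\[
\be\lc \prod_{i=1}^k u_{t,x}^{\ep,\del,(i)} \rc = \be_B\lc \prod_{i=1}^k u_0(\,\cdot\,) \exp\Big( \sum_{1\le i<j\le k} \langle A^{\ep,\del,(i)}_{t,x}, A^{\ep,\del,(j)}_{t,x}\rangle_{\ch}\Big)\rc.
\]
Using the definition \eref{m3} of $A^{\ep,\del}_{t,x}$ and the inner product \eref{innprod1}, the exponent is an explicit integral against $\gamma$ and $\laa$ that, as $\del\downarrow 0$ and then $\ep\downarrow 0$, formally converges to $\sum_{i<j}\int_0^t\int_0^t \gamma(s-r)\laa(B^i_s - B^j_r)\,ds\,dr$, giving the right-hand side of \eref{momSk1}. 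The time-reversal of Brownian motion, $B_{t-r-s}\stackrel{d}{=}B_{s}$ type manipulation, is what aligns the arguments of $u_0$ and converts $B^x_{t-r-s}$ into $B^i_s + x$.

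The key analytic step, and the place where the integrability hypotheses enter, is the uniform moment bound \eref{eq10}. The plan is to bound the exponent $\sum_{i<j}\langle A^{\ep,\del,(i)}_{t,x}, A^{\ep,\del,(j)}_{t,x}\rangle_{\ch}$ uniformly in $\ep,\del$ by a quantity whose $\be_B$-exponential moment is finite. Passing to Fourier variables via the second line of \eref{innprod1}, each pairwise inner product is controlled by an expression of the form $\int \gamma(s-r)\mu(d\xi)$-integrals in which the regularizing kernels $p_\ep$ contribute factors $|\cf p_\ep(\xi)|\le 1$; this lets me dominate the regularized exponent by its formal ($\ep=\del=0$) counterpart. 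I would then show $\sup_{\ep,\del}\be_B[\exp(k\sum_{i<j}\langle\cdots\rangle_\ch)]<\infty$ by expanding the exponential as a series and estimating each term using the same simplex-and-spectral-measure machinery developed in Lemma~\ref{lem1} and the proof of Theorem~\ref{thmSk1}, so that the local integrability of $\gamma$ together with Hypothesis~\ref{hyp:mu} forces summability. This is the main obstacle: one must obtain a bound that is both uniform in the regularization parameters and summable after taking the $\be_B$-expectation, and the critical point is controlling the Brownian functional $\laa(B^i_s-B^j_r)$ (which can be singular, e.g.\ for the Riesz kernel) in expectation — this is precisely where condition \eref{mu1} is used to tame the small-scale behavior.

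With the uniform bound in hand, the convergence in $L^p$ follows by a standard argument: the uniform moment estimate \eref{eq10} gives uniform integrability of every power, so it suffices to establish convergence in probability (or in $L^2$) of $u_{t,x}^{\ep,\del}$. I would identify the $L^2(\Omega)$ limit by matching Wiener chaos expansions—showing that the kernels of $u_{t,x}^{\ep,\del}$ converge in $\ch^{\otimes n}$ to the kernels $f_n$ of the solution constructed in Theorem~\ref{thmSk1}—and then upgrade $L^2$ convergence to $L^p$ convergence via the uniform moments and interpolation. Since the limit solves the Skorohod mild equation \eref{eq:sko-mild} and such a solution is unique by Theorem~\ref{thmSk1}, the limit must coincide with $u$. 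Finally, taking $\ep,\del\downarrow 0$ in the moment formula for the approximations, and justifying the interchange of limit and $\be_B$ by dominated convergence (the domination coming from the very bound established for \eref{eq10}), yields the closed-form expression \eref{momSk1} for $\be[u_{t,x}^k]$. The order I would follow is: Gaussian computation of the approximate moments; uniform bound \eref{eq10}; $L^p$ convergence and identification of the limit; passage to the limit in the moment formula.
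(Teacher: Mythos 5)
Your proposal follows essentially the same route as the paper: the conditional Gaussian computation reducing $\be[(u^{\ep,\delta}_{t,x})^k]$ to $\be_B[\exp(\sum_{i<j}\langle A^{\ep,\delta,B^i}_{t,x},A^{\ep,\delta,B^j}_{t,x}\rangle_{\ch})]$, the uniform exponential moment bound obtained by expanding the exponential and estimating $\be[\langle A^{\ep,\delta,B}_{t,x},A^{\ep,\delta,\widetilde B}_{t,x}\rangle_{\ch}^{n}]$ with the simplex/spectral-measure machinery of Lemma~\ref{lem1}, and the passage to the limit plus identification with the unique mild solution. The only (inessential) deviation is that you identify the $L^2$ limit by matching chaos kernels, whereas the paper checks Cauchyness via the mixed moments $\be[u^{\ep,\delta}_{t,x}u^{\ep',\delta'}_{t,x}]$ and then passes to the limit in the duality relation \eqref{eq8}; also note that the pairwise inner products cannot be dominated pathwise by their $\ep=\delta=0$ counterparts (the Fourier integrands are oscillatory), so the domination must be carried out, as you in fact do afterwards, at the level of the $\be_B$-moments.
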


\begin{proof}
To simplify the proof we assume that $u_0$ is identically one. Fix an integer $k \geq 2$. Using \eref{eq9} we have
\begin{equation*}
\be \lc \lp u_{t,x}^{\varepsilon,\delta}\rp ^k\rc=\be_W \lc\prod_{j=1}^k
\be_B\lc \exp \lp   W(A^{\varepsilon,\delta,
B^j}_{t,x})-
\frac{1}{2}\alpha_{t,x}^{\varepsilon,\delta,B^j}\rp \rc \rc\,,
\end{equation*}
where for any $j=1,\dots,k$,  $A_{t,x}^{\varepsilon,\delta,B^j}$ and $\alpha_{t,x}^{\varepsilon,\delta,B^j}$ are evaluations of  \eqref{m3} using the Brownian motion $B^j$. Therefore,  since $W(A^{\varepsilon,\delta, B^j}_{t,x})$ is a Gaussian random variable conditionally on $B$, we obtain
\begin{eqnarray}\label{eq:exp-moments-utx-ep-delta}
\be \lc \lp u_{t,x}^{\varepsilon,\delta}\rp ^k\rc &=&
\be_B \lc 
\exp \lp\frac{1}{2}\|\sum_{j=1}^k A_{t,x}^{\varepsilon,\delta,B^j}\|^2_{\mathcal{H}}
-\frac{1}{2}\sum_{j=1}^k \alpha_{t,x}^{\varepsilon,\delta,B^j}\rp\rc \notag\\
&=& \be_B \lc 
\exp \lp\frac{1}{2}\|\sum_{j=1}^k A_{t,x}^{\varepsilon,\delta,B^j}\|^2_{\mathcal{H}}
-\frac{1}{2}\sum_{j=1}^k \| A_{t,x}^{\varepsilon,\delta,B^j}\|^2_{\mathcal{H}}\rp\rc   \notag\\
&=&\be_B \lc \exp \lp\sum_{1\leq i < j \leq k}\langle
A_{t,x}^{\varepsilon,\delta,B^i},
A_{t,x}^{\varepsilon,\delta,B^j}\rangle _{\mathcal{H}}\rp\rc\,.
\end{eqnarray}

Let us now evaluate the quantities $\langle A_{t,x}^{\varepsilon,\delta,B^i}, A_{t,x}^{\varepsilon,\delta,B^j}\rangle _{\mathcal{H}}$ above: by the definition of $A_{t,x}^{\varepsilon,\delta,B^i}$, for any $i\not= j$  we have
\begin{equation} \label{eq11}
\langle A_{t,x}^{\varepsilon,\delta,B^i},A_{t,x}^{\varepsilon,\delta,B^j}\rangle_{\mathcal{H}} =
 \int_0^t \int_0^t \int_{\R^{d}} \cf A^{\ep,\delta, B^i}_{t,x} (u,\cdot)(\xi) \, \overline{\cf A}^{\ep,\delta,B^j}_{t,x}(v,\cdot) (\xi)\gamma(u-v) \mu(d\xi) dudv.
 \end{equation}
 On the other hand, for $u\in [0,t]$ we can write
 \begin{eqnarray*}
\cf A^{\ep,\delta, B^i}_{t,x}(u,\cdot)(\xi)
&=&  \frac 1\delta \int_0 ^{ \delta \wedge(t-u)}  \cf p_{\ep}(B_{t-u-s}^{i}+x-\cdot) (\xi) ds  \\
&= &\frac 1\delta \int_0 ^{ \delta \wedge(t-u)}    \exp \lp
-\frac{\ep^{2}|\xi|^{2}}{2}+\imath \lla \xi ,B^i_{t-u-s}+x\rra \rp
ds.
\end{eqnarray*}
Thus
\begin{align}
&\langle A_{t,x}^{\varepsilon,\delta,B^i},A_{t,x}^{\varepsilon,\delta,B^j}\rangle_{\mathcal{H}} 
\label{m2} \\ 
&\hspace{0.20in} =\int_{\R^{d}} \left(   \iot \iot
   \left( \frac 1{\delta^2}  \int_0^{\delta\wedge v } \int_0^{\delta\wedge u} e^{\imath \lla \xi ,B^i_{u-s_1}- B^j_{v-s_2}\rra}     ds_1ds_2   \right)\gamma(u-v)    dudv  \right)    
   \times   e^{-\ep^2 |\xi|^2}   \mu(d\xi), \notag
\end{align}
and we divide the proof in several steps.

\smallskip
\noindent \textit{Step 1:}  We claim that,
\begin{equation}\label{eq12}
\lim_{\varepsilon \downarrow 0} \lim_{ \delta \downarrow 0} \langle A_{t,x}^{\varepsilon,\delta,B^i}, A_{t,x}^{\varepsilon,\delta,B^j}\rangle_{\mathcal{H}}=\iot \iot\gamma(u-v)\laa(B_{u}^i-B_{v}^j) du dv \,,
\end{equation}
where the convergence holds in   $L^1(\Omega)$. Notice first that
the right-hand side of equation (\ref{eq12}) is finite almost surely
because
\[
\be_B \lc \iot \iot \gamma(u-v)\laa(B_{u}^i-B_{v}^j) dudv\rc =\iot
\iot    \int_{\R^{ d}} \gamma(u-v) e^{-\frac {1}{2} (u+v) |\xi|^2}
\mu(d\xi) dudv
\]
and we show that this is finite  making the change of variables  $x=u-v$, $y=u+v$, and using  our hypothesis on $\gamma$ and $\mu$ like in the proof of Theorem \ref{thmSk1}.

 In order to show the convergence (\ref{eq12}) we first let  $\delta$ tend to zero. Then, owing to the continuity of $B$ and applying some dominated convergence arguments to \eqref{m2}, we obtain
 the following limit almost surely and in $L^1(\Omega)$
 \begin{equation}\label{eq13}
 \lim_{ \delta \downarrow 0} \langle A_{t,x}^{\varepsilon,\delta,B^i}, A_{t,x}^{\varepsilon,\delta,B^j}\rangle_{\mathcal{H}}
 = 
 \int_{\R^d}  \lp   \iot \iot    e^{\imath \lla \xi ,B^i_{u}- B^j_{v}\rra}     \gamma(u-v) dudv\rp  e^{-\ep^2 |\xi|^2} \  \mu(d\xi) .
\end{equation}
Finally, it is easily checked that the right-hand side of (\ref{eq13}) converges in $L^1(\Omega)$ to the right-hand side of  (\ref{eq12}) as $\varepsilon$ tends to zero, by means of a simple dominated convergence argument again.

\smallskip
\noindent \textit{Step 2:}
For notational convenience, we denote by $B$ and $\widetilde{B}$ two independent $d$-dimensional Brownian motions, and  $\be$ will denote here the expectation with respect to both $B$ and $\widetilde{B}$.  We  claim that for any $\lambda > 0$
\begin{equation}\label{expint}
\sup_{\varepsilon,\delta}\be  \lc  
\exp \lp \lambda\lla A_{t,x}^{\varepsilon,\delta,B}, A_{t,x}^{\varepsilon,\delta,\widetilde{B}}\rra _{\mathcal{H}} \rp \rc< \infty\,.
\end{equation}
Indeed, starting from (\ref{m2}), making the change of variables $u-s_1
\rightarrow u$, $v-s_2 \rightarrow v$,  assuming $\delta \le t$, and
using Fubini's theorem, we can write
\begin{multline*}
  \lla A_{t,x}^{\varepsilon,\delta,B}, A_{t,x}^{\varepsilon,\delta,\widetilde{B}}\rra _{\mathcal{H}}
  =
\frac 1{\delta^2}  \int_0^\delta \int_0^\delta   \int_0^{t-s_1}\int_0^{t-s_2}\int_{\R^d}  \exp\lp-\imath (B_{u}-\widetilde{B}_{v})\cdot \xi \rp \\
\times \exp(-\varepsilon |\xi|^2)  \gamma(u+s_1-v-s_2) \, \mu(d\xi) \, dudvds_1ds_2\,.
\end{multline*}
We now control the moments of $\langle A_{t,x}^{\varepsilon,\delta,B}, A_{t,x}^{\varepsilon,\delta,\tilde{B}}\rangle _{\mathcal{H}}$ in order to reach exponential integrability:
\begin{multline} \label{m7}
 \lla A_{t,x}^{\varepsilon,\delta,B}, A_{t,x}^{\varepsilon,\delta,\widetilde{B}}\rra _{\mathcal{H}}^n
=\frac 1{\delta^{2n}}  \int_{O_{\delta,n}} \int_{\R^{dn}}
\exp\left(-\imath\sum_{l=1}^n (B_{u_l}-\widetilde{B}_{v_l})\cdot\xi_l\right)\\ 
\times e^{-\varepsilon \sum_{l=1}^n|\xi_l|^2} \prod_{l=1}^n
\gamma(u_l+s_l-v_l-\widetilde{s}_l) \, \mu(d\xi) \, dsd\tilde{s}dudv,
\end{multline}
where $\mu(d\xi)=\prod_{l=1}^n \mu(d\xi_l)$, the differentials 
$ds,d\tilde{s},du,dv$ are defined similarly, and
\[
O_{\delta,n}=\left\{ (s, \widetilde{s}, u,v);\, 0\le s_l, \widetilde{s}_l
\le \delta, \, 0\le u_l \le t-s_l, \, 0\le v_l \le
t-\widetilde{s}_l , \text{ for all } 1\leq l \leq n\right\}.
\]
Moreover, we have:
\begin{eqnarray}\label{eq:charac1}
\be\lc  \exp\left(-\imath\sum_{l=1}^n (B_{u_l}-\widetilde{B}_{v_l})\cdot\xi_l\right)\rc
&=&
\exp\lp -\frac12 \var\lp  \sum_{l=1}^n (B_{u_l}-\widetilde{B}_{v_l})\cdot\xi_l\rp \rp 
 \\
&=&
\exp\left(-\frac{1}{2}\sum_{1\leq i,j\leq n}
(u_i\wedge u_j+v_i\wedge v_j)
\xi_i\cdot \xi_j\right). \notag
\end{eqnarray}
Taking into account the fact that $\gamma$ is locally integrable, this yields
\begin{eqnarray*}
\be \lc \left\langle A_{t,x}^{\varepsilon,\delta,B},
A_{t,x}^{\varepsilon,\delta, \widetilde {B}}\right\rangle _{\mathcal{H}}^n \rc
&\leq& C^n
\int_{[0,t]^{2n}}\int_{\R^{dn}}\exp\left(-\frac{1}{2}\sum_{1\leq
i,j\leq n}(s_i\wedge s_j+\widetilde{s}_i\wedge \widetilde{s}_j)\xi_i\cdot
\xi_j\right) \mu(d\xi)dsd\tilde{s}\\
&\leq&C^n \int_{\R^{dn}}\int_{[0,t]^n}\exp\left(-\sum_{1\leq i,j\leq n}(s_i\wedge s_j) \, \xi_i \cdot \xi_j\right)ds \mu(d\xi)\,.
\end{eqnarray*}
 Since
\begin{equation*}
\int_{\R^{dn}}\exp\left(-\sum_{1\leq i,j\leq n}(s_i\wedge s_j)\xi_i \cdot \xi_j\right)\mu(d\xi)
\end{equation*}
is a symmetric function of $s_1,s_2,\dots,s_n$, we can restrict our
integral to $T_n(t) =\{0<s_1< s_2< \cdots < s_n< t\}$. Hence, using the convention $s_0=0$,  we have
\begin{eqnarray}\label{eq:mAn}
\be  \lc \left \langle A_{t,x}^{\varepsilon,\delta,B}, A_{t,x}^{\varepsilon,\delta, \widetilde {B}}\right \rangle _{\mathcal{H}}^n\rc
&\leq&  C^n n!\int_{\R^{dn}}\int_{T_n(t)}\exp\left(-\sum_{1\leq i,j\leq n}(s_i\wedge s_j)\xi_i \cdot \xi_j\right)ds \mu(d\xi)\\
&=&C^n n! \int_{\R^{dn}}\int_{T_n(t)}\exp\left(-\sum_{i=1}^n
(s_i-s_{i-1})|\xi_i+ \cdots +\xi_n|^2\right)ds\mu(d\xi). \notag
\end{eqnarray}
Thus, using the same argument as in the proof of  the estimate (\ref{eq4}),  we end up with
\begin{eqnarray*}
\be  \lc \left \langle A_{t,x}^{\varepsilon,\delta,B}, A_{t,x}^{\varepsilon,\delta, \widetilde {B}}\right\rangle _{\mathcal{H}}^n\rc
&\leq&C^n n! \int_{T_n(t)}\prod_{i=1}^n \left(\sup_{\eta \in \R^d}
\int_{\R^d}e^{-(s_i-s_{i-1})|\xi_i+\eta|^2}\mu(d\xi)\right)ds\\
&\le &C^n n!
\int_{T_n(t)}\prod_{i=1}^n\left(\int_{\R^d}e^{-(s_i-s_{i-1})|\xi_i|^2}\mu(d\xi_i)\right)ds\,.
\end{eqnarray*}
Making  the change of variable $w_i=s_i-s_{i-1}$, the above integral
is equal to
\begin{equation*}
C^n n! \int_{S_{t,n}} \int_{\R^{dn}}\prod_{i=1}^n e^{-w_i
|\xi_i|^2}\mu(d\xi)dw 
\le
C^n n! \sum_{k=0}^n {n \choose k} \frac{t^k}{k!}D_N^k
(2C_N)^{n-k} ,
\end{equation*}
where we have resorted to Lemma \ref{lem1} for the last inequality.
Therefore,
\begin{eqnarray*}
\frac{1}{n!} \be \lc \left\langle A_{t,x}^{\varepsilon,\delta,B},
A_{t,x}^{\varepsilon,\delta, \widetilde {B}}\right\rangle _{\mathcal{H}}^n  \rc
\leq C^n  \sum_{k=0}^n {n \choose k} \frac{t^k}{k!}D_N^k
(2C_N)^{n-k}\,,
\end{eqnarray*}
which is exactly the right hand side of \eqref{eq:eqDD}. Thus, along the same lines as in the proof of Theorem \ref{thmSk1}, we get
\begin{equation*}
\be \lc  \exp \lp \lambda
\left\langle A_{t,x}^{\varepsilon,\delta,B}, A_{t,x}^{\varepsilon,\delta, \widetilde {B}}\right\rangle _{\mathcal{H}} \rp\rc
=
\sum_{n=0}^{\infty}\frac{\lambda^n}{n!}\be \lc \left\langle A_{t,x}^{\varepsilon,\delta,B}, A_{t,x}^{\varepsilon,\delta, \widetilde {B}}\right\rangle _{\mathcal{H}}^n\rc\,<\infty,
\end{equation*}
which completes the proof of \eref{expint}.

\medskip
\noindent \textit{Step 3:} Starting from \eqref{eq:exp-moments-utx-ep-delta}, (\ref{eq12}) and (\ref{expint})  we
deduce that  $\be [ ( u_{t,x}^{\varepsilon,\delta})^k]$
converges as $\delta$ and $\varepsilon$ tend to zero to the
right-hand side of   \eref{momSk1}. On the other hand, we can also
write
\[
\be \lc  u_{t,x}^{\varepsilon,\delta} u_{t,x}^{\varepsilon',\delta'}  \rc=
\be_B \lc  \exp \lp\ \langle A^{\varepsilon,\delta,
B^1}_{t,x} , A^{\varepsilon',\delta',
B^2}_{t,x}  \rangle_{\mathcal{H}}\rp\rc\,.
\]
As before we can show that this converges as $\varepsilon,\delta,
\varepsilon', \delta'$ tend to zero. So,
$u_{t,x}^{\varepsilon,\delta}$ converges in $L^2$ to some limit
$v_{t,x}$, and the limit is actually in  $L^p$ , for all $p \geq 1$.
Moreover, $\be [v^k_{t,x}]$ equals to the right hand side of
\eref{momSk1}. Finally, letting $ \delta$ and $\varepsilon$ tend to
zero in equation \eref{eq8} we get
\begin{equation*}
\be [Fv_{t,x}]= \be[ F]  +\be \lc \langle DF, v p_{t-\cdot}(x-\cdot)\rangle_{\mathcal{H}}\rc
\end{equation*}
which implies that the process $v$ is the solution of equation
\eref{eqSk1}, and by the uniqueness of the solution we have $v=u$.
\end{proof}
\begin{remark}\label{rmk2}
If the space dimension is $1$,  we can consider equation
\eref{eqSk1} assuming that the time covariance function is
$\gamma(t)=H(2H-1)|t|^{2H-2}$,   $\frac 12 <H<1$, and the noise is
white in space, which means $\laa(x)$ is the Dirac delta function
$\delta_0(x)$.  The integral form of this Gaussian noise is a
two-parameter process which is a Brownian motion in space and a
fractional Brownian motion with Hurst parameter $H$ in time. This
equation has been studied in  \cite{HN}, where the existence of a
unique mild solution   has been proved,  and the following  formula
for  the moments of the solution  has been obtained
\begin{equation}\label{momwhite}
\be \lc u_{t,x}^k\rc = \be_B \lc \prod_{i=1}^k u_0(B_t^i+x)
\exp \lp  \alpha_H \sum_{1\leq i< j \leq
k}\int_0^t \int_0^t
|s-r|^{2H-2}\delta_0(B_s^i-B_r^j)dsdr\rp\rc\,,
\end{equation}
where $\alpha_H=H(2H-1)$.
Notice that in the above expression the exponent  is a sum of weighted intersection local times.
\end{remark}

\subsection {Time independent noise}
In this section we consider the following stochastic heat equation
in the Skorohod sense driven by  the multiplicative  time
independent noise introduced in Section  \ref{sec2.2}:
\begin{equation}\label{eqSk2}
\frac{\partial u}{\partial t}=\frac{1}{2}\Delta u+u\diamond
\frac{\partial^{d}W}{\partial x_1 \cdots \partial
x_d}\,.
\end{equation}
The notion of mild solution based on the Skorohod integral is similar to Definition \ref{def1}.
\begin{definition}\label{def3}
An adapted random field $u=\{u_{t,x}; t \geq 0, x \in \mathbb{R}^d\}$ such
that $\be [u^2_{t,x}] < \infty$ for all $(t,x)$ is a mild solution
to equation \eref{eqSk2} with initial condition   $u_0
\in C_b(\mathbb{R}^d)$, if for any $0\le s\le t, x\in
\mathbb{R}^d$,
 the process $\{ p_{t-s}(x-y) u_{s,y};  y \in \mathbb{R}^d\}$ is Skorohod
integrable  in the sense given by relation \eqref{eq:duality-time-indep}, and the following equation holds:
\begin{equation}
u_{t,x}=  p_t u_0(x)+  \int_0^t \left(
\int_{\mathbb{R}^d} p_{t-s}(x-y)u_{s,y}\delta W_{y} \right) ds.
\end{equation}
\end{definition}

Suppose that $u=\{u_{t,x}; t\geq 0, x \in \R^d\}$ is a mild solution to equation \eref{eqSk2}. Then for any fixed $(t,x)$, the random variable $u_{t,x}$ admits the following Wiener chaos expansion:
\begin{equation}
u_{t,x}=\sum_{n=0}^{\infty}I_n(f_n(\cdot,t,x))\,,
\end{equation}
where for each $(t,x)$, $f_n(\cdot,t,x)$ is a symmetric element in
$\mathcal{H}^{\otimes n}$. Notice that here the space $\mathcal{H}$ contains functions of the space variable $y$ only.
Using an iteration procedure  similar to the one described at Section \ref{sec:ex-uniq-chaos}, one can find the
explicit formula for the kernels $f_n$ for $n \geq 1$:
\begin{multline*}
f_n(x_1,\dots,x_n,t,x) \\
=\frac{1}{n!}\int_{[0,t]^n}
p_{t-s_{\si(n)}}(x-x_{\si(n)})\cdots
p_{s_{\si(2)}-s_{\si(1)}}(x_{\si(2)}-x_{\si(1)})  
\, p_{s_{\sigma(1)}}u_0(x_{\sigma(1)}) \, ds_1 \cdots ds_n\,,
\end{multline*}
where $\si$ denotes the permutation of $\{1,2,\dots,n\}$ such that $0<s_{\si(1)}<\cdots<s_{\si(n)}<t$.
Then, to show the existence and uniqueness of the solution it suffices to show that for all $(t,x)$ we have
\begin{equation}\label{chaos2}
\sum_{n=0}^{\infty}n!\|f_n(\cdot,t,x)\|^2_{\mathcal{H}^{\otimes n}}< \infty\,.
\end{equation}
\begin{theorem}
Assume that  $\mu$  satisfies  Hypothesis \ref{hyp:mu}. Then
\eref{chaos2} holds for each $(t,x)$ and  equation \eref{eqSk2} has
a unique mild solution.
\end{theorem}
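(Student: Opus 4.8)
The plan is to reduce the statement to the proof of Theorem~\ref{thmSk1}. The key observation is that the kernels $f_n(\cdot,t,x)$ appearing here are exactly the time integrals $\int_{[0,t]^n}$ of the space--time kernels $f_n(s_1,x_1,\dots,s_n,x_n,t,x)$ used in the proof of Theorem~\ref{thmSk1}. Since in the present (time independent) situation the inner product on $\mathcal{H}$ given by \eqref{innprod2} only carries the spatial covariance $\laa$, equivalently the spectral measure $\mu$, integrating the time variables against Lebesgue measure shows that the space-only norm $\|f_n(\cdot,t,x)\|^2_{\mathcal{H}^{\otimes n}}$ coincides with the space--time norm of the kernels of Theorem~\ref{thmSk1} computed with the constant time covariance $\gamma\equiv 1$. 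Because the constant function is nonnegative definite and locally integrable on $[0,t]$, the bound \eqref{chaos2} is precisely the inequality \eqref{chaos} with this choice of $\gamma$, so I expect no new phenomenon beyond careful bookkeeping of the time integrations.

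Concretely, I would first rewrite $n!\,\|f_n(\cdot,t,x)\|^2_{\mathcal{H}^{\otimes n}}$ in the spatial Fourier variables using the second identity in \eqref{innprod2}, bound the initial datum by $\|u_0\|_\infty$ to reduce to the chained heat-kernel kernel $g_n$, and apply Cauchy--Schwarz together with $ab\le\frac12(a^2+b^2)$ exactly as in Theorem~\ref{thmSk1}. The only difference is that the two families of dummy time variables now decouple and are integrated against Lebesgue measure rather than against $\gamma(s_i-r_i)$; integrating them out produces the finite constant $C=2\int_0^t 1\,dr=2t$ in place of $2\int_0^t\gamma(r)\,dr$. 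From this point the argument is identical: the Fourier estimate \eqref{eq4} (the supremum-over-shift trick, exploiting $|e^{\imath x\cdot\eta}|=1$) and the change of variables $w_i=s_{i+1}-s_i$ carry over verbatim and reduce the $n$-th term to $\int_{\R^{nd}}\int_{S_{t,n}} e^{-\sum_{i=1}^n w_i|\xi_i|^2}\,dw\,\prod_{i=1}^n\mu(d\xi_i)$, which is controlled by Lemma~\ref{lem1}.

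Summing over $n$, I would then choose $N$ so large that $2CC_N<1$, which is possible because $C_N\to 0$ as $N\to\infty$ under Hypothesis~\ref{hyp:mu} (condition \eqref{mu1}). The same combination of a geometric series in $n$ and an exponential series in $k$ as at the end of the proof of Theorem~\ref{thmSk1} then yields $\sum_{n\ge 0} n!\,\|f_n(\cdot,t,x)\|^2_{\mathcal{H}^{\otimes n}}<\infty$, i.e.\ \eqref{chaos2}. Existence and uniqueness of the mild solution follow as before: \eqref{chaos2} guarantees that the formal chaos series converges in $L^2(\Omega)$ and hence defines a solution, while the explicit determination of the kernels $f_n$ forces any solution to coincide with it.

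The step requiring the most care---and the only plausible obstacle---is the bookkeeping of the time integrations hidden inside $f_n$. One must check that integrating out the time variables genuinely yields the finite constant $C=2t$ for fixed $t$, rather than a divergence; this is exactly the role of the local integrability of $\gamma\equiv1$ on $[0,t]$, and it is what guarantees that $C$ stays fixed while $C_N$ is driven to $0$ by enlarging $N$. Once this is verified, the spatial part of the estimate is untouched from Theorem~\ref{thmSk1} and no additional difficulty appears.
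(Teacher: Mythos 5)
Your proposal is correct and matches the paper's intent: the paper omits this proof precisely because it is "analogous to the proof of Theorem \ref{thmSk1}", and your reduction via the observation that the spatial $\mathcal{H}^{\otimes n}$-norm of the time-integrated kernels equals the space--time norm of Theorem \ref{thmSk1} with $\gamma\equiv 1$ (hence $C=2t$) is exactly the right bookkeeping. All subsequent steps (the Fourier/supremum estimate \eqref{eq4}, Lemma \ref{lem1}, and the choice of $N$ with $2CC_N<1$) carry over verbatim, as you say.
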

The proof of this theorem is analogous to the proof of Theorem  \ref{thmSk1} and is omitted for sake of conciseness.
As in  the previous subsection, we can  deduce the following moment formula for the solution to equation \eref{eqSk2}.
\begin{equation}\label{momSk2}
\be \lc u^k_{t,x}\rc=\be_B  \lc  \prod_{i=1}^k
u_0(B_t^i+x)\exp \lp\sum_{1\leq i < j \leq k}\int_0^t \int_0^t
\laa(B_s^i-B_r^j)dsdr\rp\rc\,,
\end{equation}
where $B^i$, $i=1,\dots, k$,  are  $d$-dimensional  independent Brownian motions.

\section{Feynman-Kac functional}\label{sec:eq-strato}
In this section we construct a candidate solution for equation  (\ref{e1}) using a suitable version of Feynman-Kac formula. The construction has been inspired by the approach developed in~\cite{{Song Jian}} for the case of fractional noises.  We will establish the existence and H\"older continuity properties of the Feynman-Kac functional.

\subsection{Construction of the Feynman-Kac functional}
We first consider the  time dependent noise introduced in Section \ref{sec2.1}, and later we deal with the time independent noise introduced in Section  \ref{sec2.2}.
\subsubsection{Time dependent noise}
Suppose first  that $W$ is the time dependent noise introduced in Section  \ref{sec2.1}. If the initial condition of equation (\ref{e1}) is a continuous and bounded function
 $u_0$, analogously to \cite{Song Jian} we define
\begin{equation}  \label{FK}
u_{t,x}=\be_{  B} \lc   u_0(B^x_t)  \exp\left( \int_0^t
\int_{\R^d}\delta_0(B_{t-r}^x-y)W(dr,dy)\right)\rc \,,
\end{equation}
where $B^x$ is a $d$-dimensional Brownian motion independent of $W$ and starting at $x\in \R^d$.  Our first goal is thus to give a meaning to the functional
\begin{equation}\label{Vtx}
V_{t,x}=\int_0^t \int_{\R^d}\delta_0(B_{t-r}^x-y)W(dr,dy)\,
\end{equation}
appearing in the exponent of the Feynman-Kac formula \eqref{FK}.    To this aim, like in the case of the formula for moments (see \eqref{regW}), we will proceed by approximation. Namely,  we will approximate $V$ by the process
\begin{equation}\label{equ3}
V^\varepsilon_{t,x}=\int_0^t \int_{\R^d}p_\ep(B_{t-r}^x-y)W(dr,dy)\,
,\quad \varepsilon >0\,,
\end{equation}
which is well defined as a Wiener integral for a fixed path of the
Brownian motion $B$. The convergence of the approximation $V^\ep$ is
obtained in the next  proposition, for which we need to impose the following
conditions on the function $\gamma$ and the measure $\mu$.

\begin{hypothesis}\label{hyp:mu2}
There exists  a constant   $0 < \beta < 1$  such that for any  $t\in \R$,
\begin{equation}  \label{t1}
0 \le \gamma(t)\leq C_{\beta}|t|^{-\beta}
\end{equation}
for some constant  $C_{\beta}>0$,  and the measure $\mu$ satisfies
\begin{equation}   \label{t2}
\int_{\R^d}\frac{\mu(d\xi)}{1+|\xi|^{2-2\beta}}< \infty\,.
\end{equation}
\end{hypothesis}

\begin{proposition}\label{propdelta}
Let $V^\ep_{t,x}$ be the functional defined in (\ref{equ3}) and suppose that Hypothesis \ref{hyp:mu2} holds. Then for fixed
$t\ge 0$ and $x\in \R^d$, the random variable $V^\ep_{t,x}$  converges in $L^2(\Omega)$ towards a functional denoted by
$V_{t,x}$. Moreover, conditioned by $B$, $V_{t,x}$ is a Gaussian random variable with mean $0$ and variance
\begin{equation}\label{var}
{\bf Var}_W(V_{t,x})=\int_0^t \int_0^t \gamma(r-s)\laa(B_r-B_s)drds\,.
\end{equation}
\end{proposition}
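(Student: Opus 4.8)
The plan is to exploit the fact that, conditionally on the Brownian motion $B$, the random variable $V^\ep_{t,x}$ is a Wiener integral. Writing $A^\ep_{t,x}(r,y)=p_\ep(B^x_{t-r}-y)\mathbf{1}_{[0,t]}(r)$, this is (for a fixed $B$-path) a deterministic element of $\mathcal H$, so that $V^\ep_{t,x}=W(A^\ep_{t,x})$ is centered Gaussian in $W$ with conditional variance $\|A^\ep_{t,x}\|^2_{\mathcal H}$. My first step is to compute the inner products through the spectral formula \eqref{innprod1}. Since $\cf A^\ep_{t,x}(r,\cdot)(\xi)=e^{-\imath\lla\xi,B^x_{t-r}\rra}\,e^{-\ep^2|\xi|^2/2}$, and using the time reversal $r\mapsto t-r$ (which leaves $[0,t]^2$ invariant, turns $B^x_{t-r}-B^x_{t-s}$ into $B_r-B_s$ since the starting point cancels, and preserves the even function $\gamma$), I obtain
\[
\lla A^\ep_{t,x},A^{\ep'}_{t,x}\rra_{\mathcal H}=\iot\iot\int_{\R^d}e^{-\imath\lla\xi,B_r-B_s\rra}\,e^{-(\ep^2+\ep'^2)|\xi|^2/2}\,\gamma(r-s)\,\mu(d\xi)\,drds.
\]

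For the $L^2(\Omega)$ convergence I would show that $\{V^\ep_{t,x}\}$ is Cauchy. By the conditional isometry, $\be[(V^\ep_{t,x}-V^{\ep'}_{t,x})^2]=\be_B\|A^\ep_{t,x}-A^{\ep'}_{t,x}\|^2_{\mathcal H}$, which expands into three terms of the type above. Taking $\be_B$ and using $\be_B[e^{-\imath\lla\xi,B_r-B_s\rra}]=e^{-\frac12|r-s||\xi|^2}$, each term converges, as $\ep,\ep'\to0$, to the common limit
\[
L=\iot\iot\int_{\R^d}e^{-\frac12|r-s||\xi|^2}\,\gamma(r-s)\,\mu(d\xi)\,drds,
\]
so the three contributions cancel and $\be[(V^\ep_{t,x}-V^{\ep'}_{t,x})^2]\to0$. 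The crux of the argument is the finiteness of $L$: bounding $\gamma(r-s)\le C_\beta|r-s|^{-\beta}$, performing the change of variables $u=|r-s|$, and applying Tonelli reduces matters to the elementary estimate $\int_0^t u^{-\beta}e^{-u|\xi|^2/2}\,du\le C_t\,(1+|\xi|^2)^{\beta-1}$ (split at $|\xi|=1$ and compare with the Gamma integral $\int_0^\infty u^{-\beta}e^{-u|\xi|^2/2}du$, finite since $\beta<1$). This yields $L\le C\int_{\R^d}\mu(d\xi)/(1+|\xi|^{2-2\beta})$, which is finite precisely under Hypothesis \ref{hyp:mu2}. Hence $V^\ep_{t,x}\to V_{t,x}$ in $L^2(\Omega)$.

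To obtain the conditional Gaussianity I would upgrade this to a pathwise statement in $B$. Since $\be_B\|A^\ep_{t,x}-A^{\ep'}_{t,x}\|^2_{\mathcal H}\to0$, I can extract a sequence $\ep_n\downarrow0$ with $\be_B\|A^{\ep_n}_{t,x}-A^{\ep_{n+1}}_{t,x}\|^2_{\mathcal H}\le 4^{-n}$; then $\sum_n\|A^{\ep_n}_{t,x}-A^{\ep_{n+1}}_{t,x}\|_{\mathcal H}<\infty$ almost surely, so for a.e.\ $B$-path $A^{\ep_n}_{t,x}$ converges in $\mathcal H$ to some $A_{t,x}$. Consequently $W(A^{\ep_n}_{t,x})\to W(A_{t,x})$ conditionally in $L^2(\Omega_W)$, and being an $L^2$-limit of conditionally centered Gaussians, $V_{t,x}=W(A_{t,x})$ is, conditionally on $B$, centered Gaussian with variance $\|A_{t,x}\|^2_{\mathcal H}$.

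It remains to identify this conditional variance with $\iot\iot\gamma(r-s)\laa(B_r-B_s)\,drds$. Set $g_n=\|A^{\ep_n}_{t,x}\|^2_{\mathcal H}=\iot\iot\gamma(r-s)\laa_{\ep_n}(B_r-B_s)drds$, where $\laa_{\ep}(z)=\int_{\R^d}e^{-\ep^2|\xi|^2}e^{-\imath\lla\xi,z\rra}\mu(d\xi)\ge0$ is the convolution of $\laa$ with a Gaussian; thus $g_n\to\|A_{t,x}\|^2_{\mathcal H}=:g$ a.s. Off the diagonal $\laa_{\ep_n}(B_r-B_s)\to\laa(B_r-B_s)$ pointwise, so Fatou in $(r,s)$ gives $h:=\iot\iot\gamma(r-s)\laa(B_r-B_s)drds\le g$ a.s.; on the other hand the annealed computation of the previous step shows $\be_B g_n\to\be_B h$, while Fatou in $B$ gives $\be_B g\le\liminf_n\be_B g_n=\be_B h$. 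Since $g\ge h\ge0$ with equal finite expectations, I conclude $g=h$ a.s., which is the desired variance. The main difficulty is the sharp integral estimate reducing $L$ to condition \eqref{t2}; the secondary subtlety is that the conditional (pathwise-in-$B$) Gaussian statement does not follow from $L^2(\Omega)$ convergence alone, which is why I pass through the almost sure $\mathcal H$-convergence along the subsequence and the Fatou argument to pin down the limiting variance.
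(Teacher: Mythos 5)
Your proposal is correct and follows essentially the same route as the paper: the spectral computation of $\langle A^{\ep_1}_{t,x},A^{\ep_2}_{t,x}\rangle_{\mathcal H}$, the Cauchy criterion via convergence of the mixed second moments to the common limit $\sigma_t^2$, and the reduction of the finiteness of that limit to condition \eqref{t2} through the bound $\int_0^t u^{-\beta}e^{-u|\xi|^2/2}\,du\le C(1+|\xi|^2)^{\beta-1}$. Where the paper merely states that \eqref{var} follows ``by a similar argument,'' you supply a complete justification (almost sure $\mathcal H$-convergence along a subsequence, then the double Fatou argument combined with equality of expectations to identify the conditional variance), which is a correct and welcome elaboration of the same strategy rather than a different one.
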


\begin{proof}
Our first goal is to find
\begin{equation}  \label{m1}
\lim_{\ep_{1},\ep_{2}\to 0} \be\lc V_{t,x}^{\ep_{1}}\, V_{t,x}^{\ep_{2}} \rc.
\end{equation}
To this aim, we set  $A^\ep_{t,x}(r,y)=p_{\ep}(B_{t-r}^{x}-y) \mathbf{1}_{[0,t]}(r) $. Then
\begin{eqnarray*}
\be\lc V_{t,x}^{\ep_{1}}\, V_{t,x}^{\ep_{2}} \rc &=&
 \be\lc W\lp   A^{\ep_1}_{t,x} \rp  W\lp A^{\ep_2}_{t,x} \rp \rc
= \be_{B}\lc \lla A^{\ep_1}_{t,x},\,  A^{\ep_2}_{t,x} \rra_{\ch} \rc  \\
&=&   \be_B\lc \int_0^t \int_0^t \int_{\R^{d}}
\cf A^{\ep_1}_{t,x} (u,\cdot)(\xi) \, \overline{\cf
A^{\ep_2}_{t,x}}(v,\cdot) (\xi)\gamma(u-v) \mu(d\xi) dudv \rc \, .
\end{eqnarray*}
Furthermore, we can write for $u\le t$
\begin{equation*}
\cf A^{\ep_1}_{t,x}(u,\cdot)(\xi)
= \cf p_{\ep_1}(B_{t-u}^{x}-\cdot) (\xi)
= e^{-\frac 12 \ep_1^{2}|\xi|^{2}  +\imath \lla \xi ,B_{u}^{x}\rra}  ,
\end{equation*}
and thus
\begin{equation}\label{eq:cov-varphi-epsilon}
\lla A^{\ep_1}_{t,x},\,  A^{\ep_2}_{t,x} \rra_{\ch}
=
\int_{\R^{d}} \lp \int_{[0,t]^{2}}e^{\imath \lla \xi ,  \, B_{v}-B_{u}\rra} \gamma(u-v) dudv  \rp
e^{-\frac 12 (\ep_{1}^{2}+\ep_{2}^{2})|\xi|^{2} } \mu(d\xi).
\end{equation}
This yields
\begin{eqnarray}\label{eq:cov-Vt-identity}
\be\lc V_{t,x}^{\ep_{1}}\, V_{t,x}^{\ep_{2}} \rc
&=& \be_{B}\lc \lla A^{\ep_1}_{t,x},\,  A^{\ep_2}_{t,x} \rra_{\ch} \rc  \notag\\
&=& \int_{\R^{d}} \lp \int_{[0,t]^{2}}e^{-\frac 12 |\xi|^{2}|v-u|} \gamma(u-v) dudv \rp
e^{-\frac 12 (\ep_{1}^{2}+\ep_{2}^{2})|\xi|^{2}} \mu(d\xi).
\end{eqnarray}
Set now
\begin{equation*}
\si_{t}^{2} :=
\int_{\R^{d}} \lp \int_{[0,t]^{2}}e^{-\frac12|\xi|^{2}|v-u|} \gamma(u-v)dudv\rp
\, \mu(d\xi).
\end{equation*}
 Is easily seen by direct integration and by using the
hypothesis  (\ref{t1}) that
\begin{equation*}
\int_{[0,t]^{2}}e^{-\frac12 |\xi|^{2}|v-u|}  \gamma(u-v) dudv \le   c_\beta\int_{[0,t]^{2}}e^{-\frac12|\xi|^{2}|v-u|}  |u-v|^{-\beta} dudv
\le
\frac{c}{1+|\xi|^{2-2\beta}}.
\end{equation*}
Thus
\begin{equation*}
\si_{t}^{2} \le c \,
\int_{\R^{d}} \frac{\mu(d\xi)}{1+|\xi|^{2-2\beta}} \,,
\end{equation*}
which is a finite quantity by  hypothesis (\ref{t2}). As a
consequence, for every sequence $\ep_n$ converging to zero,
$V_{t,x}^{\varepsilon_n}$ converges in $L^2$ to a limit denoted by
$V_{t,x}$ which does not depend on the choice of the sequence
$\varepsilon_n$. Finally, by a similar argument, we show
(\ref{var}). This completes the proof of the proposition.
\end{proof}

\begin{remark} \label{rem4.3}
We could also regularize the noise in time, and define
\begin{equation} \label{rem5}
V^{\ep, \delta}_{t,x} = W( A^{\ep,\delta}_{t,x}),
\end{equation}
where $A^{\ep,\delta}_{t,x}$ has been introduced in (\ref{m3}). Then it is easy to check that  $V^{\ep, \delta}_{t,x}$ converges as $\delta$ tend to zero in $L^2(\Omega)$ to $V^{\ep}_{t,x}$.
\end{remark}

In  order to give a meaning to formula (\ref{FK}) we need to establish the existence of 
exponential moments for $V_{t,x}$.  To  complete this task,  we need
the following lemma.

\begin{lemma}   \label{lemeps}
Suppose that Hypothesis \ref{hyp:mu2} holds. Then for any
$\ep>0$ there exists a constant $C_{\ep}$ such that for any $v>0$
we have:
\begin{equation}\label{eq:bnd-exp-mu}
\sup_{\eta\in \R^d} \int_{\R^{d}} e^{-\frac{v}{2}|\xi-\eta|^{2}} \,
\mu(d\xi) \le C_{\ep} + \frac{\ep}{v^{1-\beta}}.
\end{equation}
\end{lemma}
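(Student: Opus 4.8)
The plan is to remove the supremum over $\eta$ by reducing it to the single value $\eta=0$, and then to estimate the resulting scalar integral by splitting the spectral measure into a compact part and a tail. The decisive first step exploits the non-negativity of $\laa$, exactly as in the derivation of \eref{eq4}. Since $\mu=\cf\laa$, the Fourier duality underlying the inner product \eref{innprod2}, together with the elementary Gaussian computation $\int_{\R^d}e^{-\imath\lla x,\xi\rra}e^{-\frac v2|\xi|^2}d\xi=(2\pi/v)^{d/2}e^{-|x|^2/(2v)}$, gives for every $\eta\in\R^{d}$
\[
\int_{\R^{d}}e^{-\frac v2|\xi-\eta|^{2}}\,\mu(d\xi)
=(2\pi/v)^{d/2}\int_{\R^{d}}e^{-\imath\lla x,\eta\rra}\,e^{-|x|^{2}/(2v)}\,\laa(x)\,dx .
\]
Bounding $|e^{-\imath\lla x,\eta\rra}|=1$ and using $\laa\ge 0$, the modulus of the right-hand side is largest at $\eta=0$, so that
\[
\sup_{\eta\in\R^{d}}\int_{\R^{d}}e^{-\frac v2|\xi-\eta|^{2}}\,\mu(d\xi)
=\int_{\R^{d}}e^{-\frac v2|\xi|^{2}}\,\mu(d\xi).
\]
This reduction is the crux: a naive pointwise bound on $(1+|\xi|^{2-2\beta})e^{-\frac v2|\xi-\eta|^{2}}$ would instead generate an uncontrolled factor $|\eta|^{2-2\beta}$, so discarding the shift via $\laa\ge 0$ is what makes the estimate uniform in $\eta$.

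For the second step I rewrite the scalar integral against the finite measure $\nu(d\xi):=\mu(d\xi)/(1+|\xi|^{2-2\beta})$, whose total mass $M:=\int_{\R^{d}}\nu(d\xi)$ is finite by \eref{t2}, namely $\int_{\R^{d}}e^{-\frac v2|\xi|^{2}}\mu(d\xi)=\int_{\R^{d}}(1+|\xi|^{2-2\beta})\,e^{-\frac v2|\xi|^{2}}\,\nu(d\xi)$. The one-variable optimization $\sup_{\xi}|\xi|^{2-2\beta}e^{-\frac v2|\xi|^{2}}=c_{\beta}\,v^{-(1-\beta)}$, with $c_{\beta}=(2-2\beta)^{1-\beta}e^{-(1-\beta)}$, combined with $e^{-\frac v2|\xi|^{2}}\le 1$, yields the pointwise bound $(1+|\xi|^{2-2\beta})e^{-\frac v2|\xi|^{2}}\le 1+c_{\beta}v^{-(1-\beta)}$. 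Applied globally this already gives a bound of the form $M+c_\beta M\,v^{-(1-\beta)}$, but with a \emph{fixed} coefficient $c_\beta M$ in front of $v^{-(1-\beta)}$ rather than an arbitrarily small one.

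To produce the small coefficient $\ep$ required by \eref{eq:bnd-exp-mu}, I split at radius $N$. On $\{|\xi|\le N\}$ I bound $e^{-\frac v2|\xi|^{2}}\le 1$, contributing the finite constant $\mu\{|\xi|\le N\}\le(1+N^{2-2\beta})M$. On $\{|\xi|>N\}$ I apply the pointwise bound against the tail mass $\nu_{N}:=\int_{|\xi|>N}\nu(d\xi)$, contributing at most $\nu_{N}+c_{\beta}\nu_{N}\,v^{-(1-\beta)}$. Since $M<\infty$ forces $\nu_{N}\to 0$ as $N\to\infty$, given $\ep>0$ I first fix $N$ so large that $c_{\beta}\nu_{N}\le\ep$ and then set $C_{\ep}:=(1+N^{2-2\beta})M+M$; collecting the two contributions gives $\int_{\R^d}e^{-\frac v2|\xi|^2}\mu(d\xi)\le C_{\ep}+\ep\,v^{-(1-\beta)}$, which together with the reduction of the first step is precisely \eref{eq:bnd-exp-mu}. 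The only genuinely delicate point is justifying the Fourier identity of the first step when $\mu$ is a general tempered measure rather than a function (and $\laa$ possibly a measure, as for space white noise); this is handled by approximating through the isometry defining $\mathcal H$ in \eref{innprod2}, after which everything reduces to the routine optimization and tail estimate above.
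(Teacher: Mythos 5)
Your proof is correct and follows essentially the same route as the paper's: the supremum over $\eta$ is removed by passing to Fourier variables and using $\laa\ge 0$ (exactly the paper's Parseval argument), and the remaining integral $\int e^{-\frac{v}{2}|\xi|^{2}}\mu(d\xi)$ is split at a large radius, with the tail controlled by the boundedness of $x\mapsto x^{1-\beta}e^{-x}$ and made smaller than $\ep$ via the integrability condition \eref{t2}. The only cosmetic difference is that you integrate a pointwise bound against the finite measure $\mu(d\xi)/(1+|\xi|^{2-2\beta})$ rather than writing the tail directly as $c_{2}v^{-(1-\beta)}\int_{|\xi|>M}|\xi|^{-(2-2\beta)}\mu(d\xi)$, which changes nothing of substance.
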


\begin{proof}
 The fact that the left hand side of \eqref{eq:bnd-exp-mu} is uniformly bounded in $\eta$ is proven similarly to \eqref{eq4}, but is included here for sake of readability. 
Indeed, consider $\eta\in\R^{d}, v>0$ and define a function
$\vp_{\eta}:\R^{d}\to\R_{+}$ by
$\vp_{\eta}(\xi)=e^{-\frac{v}{2}|\xi-\eta|^{2}}$.  Then according to
Parseval's identity we have
\begin{equation*}
\int_{\R^{d}} \vp_{\eta}(\xi) \, \mu(d\xi) = c \int_{\R^{d}}
\cf\vp_{\eta}(x) \, \laa(x)dx = c \int_{\R^{d}}v^  {-d/2 }
e^{-\frac{|x|^{2}}{2 v}} \, e^{\imath\langle x, \, \eta\rangle}\,
\laa(x)dx.
\end{equation*}
We now use the fact that $\laa$ is assumed to be   nonnegative
in order to get the following uniform bound in $\eta$
\begin{equation*}
\int_{\R^{d}} \vp_{\eta}(\xi) \, \mu(d\xi) \le c
\int_{\R^{d}}v^ {-d/2} e^{-\frac{|x|^{2}}{2 v}} \,
\laa(x)dx = \int_{\R^{d}} \vp_{0}(\xi) \, \mu(d\xi) = \int_{\R^{d}}
e^{-\frac{v}{2}|\xi|^{2}} \, \mu(d\xi) .
\end{equation*}
To  estimate  the right-hand side of the above inequality we
introduce a constant $M>0$, whose exact value is irrelevant for our
computations, and let $c_{M,1}=\mu(B(0,M))$, where $B(0,M)$ stands
for the ball of radius $M$ centered at 0 in $\R^{d}$. Then the trivial bound $e^{-\frac{v}{2}|\xi|^{2}} \le 1$ yields 
\begin{equation*}
\int_{\R^{d}} e^{-\frac{v}{2}|\xi|^{2}} \, \mu(d\xi)
\le c_{M,1} + \int_{|\xi|>M} e^{-\frac{v}{2}|\xi|^{2}} \, \mu(d\xi).
\end{equation*}
Invoking the fact that the function $x \mapsto x^{1-\beta}e^{-x}$ is bounded on $\R_{+}$, we thus get
\begin{equation*}
\int_{|\xi|>M} e^{-\frac{v}{2}|\xi|^{2}} \, \mu(d\xi)  \le
\frac{c_{2}}{v^{1-\beta}} \int_{|\xi|>M}  \, \frac{\mu(d\xi)}{
|\xi|^{2-2\beta }}   \le  \frac{c_{2}}{v^{1-\beta}} \int_{|\xi|>M}
\, \frac{\mu(d\xi)}{1+|\xi|^{2-2\beta }}.
\end{equation*}
Summarizing the above, we have obtained that
\begin{equation*}
\int_{\R^{d}} e^{-\frac{v}{2}|\xi-\eta|^{2}} \, \mu(d\xi) \le
c_{M,1} + \frac{c_{2}}{v^{ 1-\beta}} \int_{|\xi|>M}  \,
\frac{\mu(d\xi)}{1+|\xi|^{2-2\beta}},
\end{equation*}
uniformly in $\eta\in\R^{d}$. Our claim is thus obtained by choosing
$M$ large enough so that $c_{2}\int_{|\xi|>M}  \,
\frac{\mu(d\xi)}{1+|\xi|^{2-2\beta}} \leq \ep$, which is possible by
 hypothesis \eref{t2}.
\end{proof}

\smallskip

The following elementary integration result will also be crucial for the moment estimates  we deduce later.

\begin{lemma}\label{lemsimplex}
Let $\alpha \in (-1+\ep, 1)^m$  with $\ep>0$ and  set $|\alpha |= \sum_{i=1}^m
\alpha_i  $. Recall (see (\ref{simplex})) that
$T_m(t)=\{(r_1,r_2,\dots,r_m) \in \R^m: 0<r_1  <\cdots < r_m < t\}$.
Then there is a constant $\kappa$ such that
\[
J_m(t, \alpha):=\int_{T_m(t)}\prod_{i=1}^m (r_i-r_{i-1})^{\alpha_i}
dr \le \frac { \kappa^m t^{|\alpha|+m } }{ \Gamma(|\alpha|+m +1)},
\]
where by convention, $r_0 =0$. 
\end{lemma}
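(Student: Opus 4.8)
The plan is to reduce this ordered-simplex integral to a standard Dirichlet integral by a volume-preserving change of variables, evaluate it exactly, and then absorb the resulting product of Gamma factors into a geometric constant $\kappa^m$.

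First I would pass to the gap variables $w_i = r_i - r_{i-1}$ for $1 \le i \le m$ (with the convention $r_0 = 0$). The map $(r_1,\dots,r_m) \mapsto (w_1,\dots,w_m)$ is lower triangular with unit diagonal, hence has Jacobian equal to $1$, and it carries $T_m(t)$ bijectively onto the corner simplex $S_{t,m} = \{(w_1,\dots,w_m) : w_i \ge 0,\ \sum_{i=1}^m w_i \le t\}$ already used in the paper. Since $\prod_{i=1}^m (r_i - r_{i-1})^{\alpha_i} = \prod_{i=1}^m w_i^{\alpha_i}$ under this substitution, we obtain
\[
J_m(t,\alpha) = \int_{S_{t,m}} \prod_{i=1}^m w_i^{\alpha_i} \, dw .
\]

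Next I would recognize the right-hand side as a Dirichlet integral. Writing $a_i = \alpha_i + 1$, the hypothesis $\alpha_i > -1 + \ep$ guarantees $a_i > \ep > 0$, which is precisely the condition ensuring both integrability and the applicability of the classical Dirichlet formula $\int_{\{x_i \ge 0,\ \sum x_i \le 1\}} \prod_{i=1}^m x_i^{a_i-1}\,dx = \Gamma(a_1)\cdots\Gamma(a_m)/\Gamma(1 + \sum a_i)$. After the rescaling $w_i = t x_i$, so that $dw = t^m\, dx$ and $\prod w_i^{\alpha_i} = t^{|\alpha|}\prod x_i^{\alpha_i}$, this yields the exact value
\[
J_m(t,\alpha) = t^{|\alpha| + m}\, \frac{\prod_{i=1}^m \Gamma(\alpha_i + 1)}{\Gamma(|\alpha| + m + 1)} .
\]
Finally, since $\alpha_i \in (-1 + \ep, 1)$ we have $\alpha_i + 1 \in (\ep, 2)$, a compact interval on which $\Gamma$ is continuous and therefore bounded; setting $\kappa := \sup_{s \in [\ep, 2]} \Gamma(s) < \infty$ gives $\prod_{i=1}^m \Gamma(\alpha_i+1) \le \kappa^m$, and the claimed estimate follows at once.

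The main point to watch is the \emph{uniformity} of $\kappa$: it must depend neither on $m$ nor on the particular vector $\alpha$. This is secured by the two-sided constraint on the $\alpha_i$. The lower bound $-1 + \ep$ keeps each $\alpha_i + 1$ bounded away from the pole of $\Gamma$ at the origin—this is exactly where $\ep > 0$ is essential, since otherwise a factor $\Gamma(\alpha_i + 1)$ could be made arbitrarily large—while the upper bound $\alpha_i < 1$ keeps the relevant argument in a fixed compact interval. An alternative to invoking the Dirichlet formula would be an induction on $m$ based on the one-dimensional identity $\int_0^t w^{\alpha}(t-w)^{\beta}\,dw = B(\alpha+1,\beta+1)\,t^{\alpha+\beta+1}$ together with the Beta–Gamma relation, but the Dirichlet evaluation is cleaner and produces the factor $\Gamma(|\alpha|+m+1)$ in the denominator directly.
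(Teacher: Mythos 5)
Your proof is correct and follows essentially the same route as the paper: both derive the exact evaluation $J_m(t,\alpha)=t^{|\alpha|+m}\prod_{i=1}^m\Gamma(\alpha_i+1)/\Gamma(|\alpha|+m+1)$ (the paper via recursive Beta-function identities, you via the equivalent Dirichlet integral formula after the gap-variable substitution) and then bound $\prod_{i=1}^m\Gamma(\alpha_i+1)\le\kappa^m$ using the boundedness of $\Gamma$ on $(\ep,2)$. Your explicit attention to the uniformity of $\kappa$ in $m$ and $\alpha$ is a welcome addition to the paper's terse argument.
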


\begin{proof}
 Using identities on Beta functions and a recursive algorithm we can snow that
 \[
 J_m(t, \alpha)=\frac{\prod_{i=1}^m \Gamma (\alpha_i +1)t^{| \alpha| +m }}{\Gamma(|\alpha|+m +1)}\,,
\]
and the result follows thanks to the fact that the $\Gamma$ function is bounded on $(\ep,2)$. 
\end{proof}

With these preliminary results in hand, we can now prove the exponential integrability of  the random
variable $V_{t,x}$ defined in  Proposition \ref{propdelta}.

\begin{theorem}\label{thmexp}
Let $V_{t,x}$ be the functional defined in  Proposition \ref{propdelta}, and assume Hypothesis \ref{hyp:mu2}. Then for any $\lambda\in\R$  and $T>0$, we have $\sup_{t\in [0,T], \, x\in \R^d} \be[\exp(\lambda \, V_{t,x})] < \infty$. In particular, the functional  \eqref{FK} is well defined.
\end{theorem}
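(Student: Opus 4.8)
The plan is to exploit the conditional Gaussianity of $V_{t,x}$ established in Proposition \ref{propdelta}. Conditionally on the Brownian path $B$, the variable $V_{t,x}$ is centered Gaussian with variance $Q_{t}:=\var_{W}(V_{t,x})=\int_{0}^{t}\int_{0}^{t}\gamma(r-s)\laa(B_{r}-B_{s})\,drds$ given by \eqref{var}; observe that $Q_{t}$ does not depend on $x$, since $\laa(B_{r}^{x}-B_{s}^{x})=\laa(B_{r}-B_{s})$, so the supremum over $x\in\R^{d}$ is harmless. Taking the expectation first in $W$ and then in $B$ gives
\[
\be\lc \exp(\lambda V_{t,x})\rc=\be_{B}\lc \exp\lp \tfrac{\lambda^{2}}{2}\,Q_{t}\rp\rc,
\]
and it thus suffices to bound this last quantity uniformly over $t\in[0,T]$, for each fixed $\lambda$.

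To control the right-hand side I would expand the exponential as a power series and estimate the moments $\be_{B}[Q_{t}^{n}]$. Raising $Q_t$ to the $n$-th power and taking the Brownian expectation yields
\[
\be_{B}[Q_{t}^{n}]=\int_{[0,t]^{2n}}\prod_{i=1}^{n}\gamma(r_{i}-s_{i})\;\be_{B}\lc \prod_{i=1}^{n}\laa(B_{r_{i}}-B_{s_{i}})\rc\,drds.
\]
The factors $\gamma(r_{i}-s_{i})$ are majorized via \eqref{t1} by $C_{\beta}|r_{i}-s_{i}|^{-\beta}$, while the product of covariances is treated by inserting the Fourier representation $\laa(z)=c\int_{\R^{d}}e^{\imath\langle\xi,z\rangle}\mu(d\xi)$, which (since $\laa$ is even, one may take $s_{i}<r_{i}$) turns the Brownian expectation into the characteristic function of a Gaussian vector, namely $\exp(-\tfrac12\sum_{i,j}c_{ij}\,\xi_{i}\cdot\xi_{j})$ with $c_{ij}=\big|[s_{i}\wedge r_{i},s_{i}\vee r_{i}]\cap[s_{j}\wedge r_{j},s_{j}\vee r_{j}]\big|$ the length of the overlap of the two associated time intervals.

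The heart of the argument, and the step I expect to be the main obstacle, is to bound this Gaussian frequency integral in the presence of the self-interaction coupling. Following the device already used in the estimate \eqref{eq4} and in Step 2 of the proof of Theorem \ref{thmmom1}, I would order the $2n$ combined time points into a simplex and rewrite the quadratic form as $\sum_{i,j}c_{ij}\,\xi_{i}\cdot\xi_{j}=\sum_{k}\ell_{k}\,\big|\sum_{i\in S_{k}}\xi_{i}\big|^{2}$, where the $\ell_{k}$ are the lengths of the elementary time gaps and $S_{k}$ is the set of intervals active on the $k$-th gap. Integrating out the frequencies one at a time and majorizing each frequency integral by its value at zero shift through Lemma \ref{lemeps} produces, per pair, a factor of the form $C_{\ep}+\ep\,(\mathrm{gap})^{-(1-\beta)}$. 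Combined with the $\gamma$-factors $|r_{i}-s_{i}|^{-\beta}$, this leaves a purely deterministic integral over the time simplex whose integrand is a product of powers $(\cdot)^{\alpha_{i}}$. The delicate point is precisely to check, by pairing each $\gamma$-singularity with the heat-kernel decay of the corresponding increment (exactly as in the bound $\int_{[0,t]^{2}}e^{-\frac12|\xi|^{2}|u-v|}|u-v|^{-\beta}dudv\le c\,(1+|\xi|^{2-2\beta})^{-1}$ established inside Proposition \ref{propdelta}), that each exponent $\alpha_{i}$ stays strictly above $-1$, so that Hypothesis \ref{hyp:mu2} and in particular \eqref{t2} may be invoked.

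Finally, Lemma \ref{lemsimplex} evaluates the remaining simplex integral and produces a denominator $\Gamma(\rho n+1)$, with some $\rho>0$, growing factorially in $n$. This gamma factor dominates both the combinatorial factors arising from the symmetrization of the $2n$ time variables and the $1/n!$ weight of the exponential series, so that $\be_{B}[\exp(\tfrac{\lambda^{2}}{2}Q_{t})]=\sum_{n\ge0}\frac{(\lambda^{2}/2)^{n}}{n!}\be_{B}[Q_{t}^{n}]$ converges for every $\lambda\in\R$ and every $t$. Since the resulting bound is nondecreasing in $t$, evaluating it at $t=T$ controls the supremum over $[0,T]$, while the absence of $x$-dependence in $Q_{t}$ disposes of the supremum over $\R^{d}$. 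This gives $\sup_{t\in[0,T],\,x\in\R^{d}}\be[\exp(\lambda V_{t,x})]<\infty$, and in particular the Feynman-Kac functional \eqref{FK} is well defined.
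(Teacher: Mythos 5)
Your opening reduction coincides with the paper's: conditioning on $B$ and using \eqref{var} reduces the claim to showing that $\be_B\lc \exp\lp\tfrac{\lambda^2}{2}Y\rp\rc<\infty$ for every $\lambda$, where $Y=\int_0^t\int_0^t\ga(r-s)\laa(B_r-B_s)\,drds$. From there, however, the direct power-series strategy has a genuine gap, located exactly at the step you call delicate. The two singularities you must control sit on the \emph{same} diagonal: the kernel contributes $|r_i-s_i|^{-\beta}$, while the frequency integral for $\xi_i$, handled via Lemma \ref{lemeps}, contributes $C_\ep+\ep\,(\mathrm{gap}_i)^{-(1-\beta)}$ for some elementary gap contained in $[s_i,r_i]$. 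Whenever a pair is ``isolated'' (no other time point falls inside $[s_i,r_i]$ --- the generic situation, and the only one when $n=1$), that gap is $r_i-s_i$ itself and the product is $|r_i-s_i|^{-1}$: the exponent equals $-1$, so Lemma \ref{lemsimplex} does not apply. The $n=1$ bound you invoke sidesteps this by integrating the time variable \emph{first} and landing on $(1+|\xi|^{2-2\beta})^{-1}$, to which \eqref{t2} applies; for $n\ge 2$ the Gaussian factor is $e^{-w|\xi_i+\eta|^2}$ with a shift $\eta$ built from the other frequencies, and \eqref{t2} gives no uniform-in-$\eta$ control of $\int|\xi+\eta|^{-(2-2\beta)}\mu(d\xi)$. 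One can rescue each isolated pair by first removing the shift (the Parseval argument of Lemma \ref{lemeps}) and then doing the time integral, but this yields a \emph{fixed} constant per pair, with no small parameter $\ep$ and no contribution to the $1/\Gamma$ gain. The net outcome of your scheme, in the most optimistic accounting, is $\be_B[Y^n]\le C^n n!$ with a constant $C$ that cannot be made arbitrarily small; the series $\sum_n\lambda^n\be_B[Y^n]/n!$ then converges only for $|\lambda|<1/C$, which is strictly weaker than the statement of Theorem \ref{thmexp}.

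The paper's proof is built precisely to circumvent this. It uses Le Gall's dyadic partition of the simplex $T_2(t)$ into off-diagonal rectangles $A_{n,k}$, writing $Y=\sum_{n,k}a_{n,k}$ as in \eqref{eq:dcp-Y}. On each rectangle the increment $B_r-B_s$ splits into two \emph{independent} Brownian motions, so that $a_{n,k}\stackrel{(d)}{=}\int_0^{t/2^n}\int_0^{t/2^n}\ga(r+s)\laa(B_r+\widetilde B_s)\,dsdr$: the kernel singularity moves from $(r-s)^{-\beta}$ to $(r+s)^{-\beta}\le(rs)^{-\beta/2}$, which decouples from the heat-kernel decay, and a Cauchy--Schwarz step separates the two quadratic forms so that every simplex exponent is $-\beta/2$ or $-1/2$, safely above $-1$. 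This gives $\be[a_n^m]\le C_{\ep,1}\,m!\,(C_2\ep 2^{-n})^m$ with $\ep$ arbitrary, i.e.\ a per-scale constant proportional to $2^{-n}$. The second half of the proof --- absent from your proposal --- then recombines the scales: centering $\overline a_{n,k}=a_{n,k}-\be[a_{n,k}]$, exploiting the independence of $\{a_{n,k}\}_k$ for fixed $n$, and applying an iterated H\"older inequality with weights $b_N=\prod_{j\ge2}(1-2^{-h(j-1)})$ to conclude that $\be[\exp(\lambda b_\infty(Y-\be[Y]))]<\infty$ for every $\lambda$. Both ingredients (the off-diagonal decomposition and the cross-scale recombination) are essential and would need to be added for your argument to prove the theorem as stated.
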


\begin{proof}  Fix $t>0$ and $x\in \R^d$. Conditionally to $B$, the random
variable $V_{t,x}$ is Gaussian and centered.
 From (\ref{var}), we obtain
\begin{equation*}
\be \left[\exp(\lambda V_{t,x} )\right]=\be_B \lc \exp
\left(\frac{\lambda^2}{2}\int_0^t \int_0^t
\gamma(r-s)\laa(B_r-B_s)drds\right)\rc=\be_B \lc \exp \left(\frac{\lambda^2}{2}Y\right)\rc\,,
\end{equation*}
where
\begin{equation*}
Y=\int_0^t \int_0^t \gamma(r-s)\laa(B_r-B_s)drds\,.
\end{equation*}
 In order to show that $\be \lc \exp (\lambda Y) \rc<\infty$ for any $\lambda \in \R$, we are going to use an elaboration of a method introduced by Le Gall \cite{Le Gall} (see also \cite{HN, Song Jian}). With respect to those contributions, our case requires a careful handling of the weights $\Lambda$ and $\ga$. Notice in particular that in our general setting we do not have scaling properties, and some additional work is necessary to overcome this difficulty.

\smallskip
Le Gall's method starts from the following construction: for $n\ge 1$ and $k=1,\ldots,2^{n-1}$ we set 
\begin{equation*}
J_{n,k} := \lc \frac{(2k-2) \, t}{2^{n}} , \, \frac{(2k-1) \, t}{2^{n}} \rp, \quad
I_{n,k} := \lc \frac{(2k-1) \, t}{2^{n}} , \, \frac{2k \, t}{2^{n}} \rp,
\quad\text{and}\quad
A_{n,k} := J_{n,k} \times I_{n,k}.
\end{equation*}
Notice then that $\{A_{n,k}; \, n\ge 1, k=1,\ldots,2^{n-1}\}$ is a partition of the simplex $T_{2}(t)$, and in addition $I_{n,k-1} \cap I_{n,k}=\varnothing$ and  $J_{n,k-1} \cap J_{n,k}=\varnothing$ (see Figure \ref{fig:legall} for an illustration). 
We can thus write
\begin{equation*}
Y =  \sum_{n=1}^{\infty} \sum_{k=1}^{2^{n-1}} a_{n,k},
\quad\text{where}\quad
a_{n,k} = 
\int_{A_{n,k}}\gamma(r-s)\laa(B_r-B_s)dr ds\,.
\end{equation*}
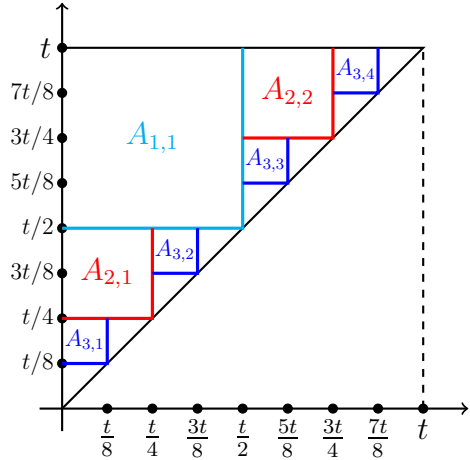
\begin{figure}[htbp]
\begin{center}
\caption{Le Gall's partition of $T_{2}(t)$ into disjoint rectangles of decreasing area.}
\label{fig:legall}
\end{center}
\begin{center}
\begin{tikzpicture}[xscale=0.3,yscale=0.3]

\draw [->,  thick] (-1,0) --(18,0);
\draw [->,  thick] (0,-1) --(0,18);
\draw [dashed,  thick] (16,0) --(16,16);

\draw [fill,black] (16,0) circle [radius=.2];
\node [below,scale=1.0, black] at (16,0) {$t$}; 
\draw [fill,black] (2,0) circle [radius=.2];
\node [below,scale=1.0, black] at (2,0) {$\frac{t}{8}$}; 
\draw [fill,black] (4,0) circle [radius=.2];
\node [below,scale=1.0, black] at (4,0) {$\frac{t}{4}$}; 
\draw [fill,black] (6,0) circle [radius=.2];
\node [below,scale=1.0, black] at (6,0) {$\frac{3t}{8}$}; 
\draw [fill,black] (8,0) circle [radius=.2];
\node [below,scale=1.0, black] at (8,0) {$\frac{t}{2}$}; 
\draw [fill,black] (10,0) circle [radius=.2];
\node [below,scale=1.0, black] at (10,0) {$\frac{5t}{8}$}; 
\draw [fill,black] (12,0) circle [radius=.2];
\node [below,scale=1.0, black] at (12,0) {$\frac{3t}{4}$}; 
\draw [fill,black] (14,0) circle [radius=.2];
\node [below,scale=1.0, black] at (14,0) {$\frac{7t}{8}$}; 

\draw [fill,black] (0,16) circle [radius=.2];
\node [left,scale=1.0, black] at (0,16) {$t$}; 
\draw [fill,black] (0,2) circle [radius=.2];
\node [left,scale=0.75, black] at (0,2) {$t/8$}; 
\draw [fill,black] (0,4) circle [radius=.2];
\node [left,scale=0.75, black] at (0,4) {$t/4$}; 
\draw [fill,black] (0,6) circle [radius=.2];
\node [left,scale=0.75, black] at (0,6) {$3t/8$}; 
\draw [fill,black] (0,8) circle [radius=.2];
\node [left,scale=0.75, black] at (0,8) {$t/2$}; 
\draw [fill,black] (0,10) circle [radius=.2];
\node [left,scale=0.75, black] at (0,10) {$5t/8$}; 
\draw [fill,black] (0,12) circle [radius=.2];
\node [left,scale=0.75, black] at (0,12) {$3t/4$}; 
\draw [fill,black] (0,14) circle [radius=.2];
\node [left,scale=0.75, black] at (0,14) {$7t/8$}; 

\draw [ thick] (0,0)--(16,16)--(0,16);

\draw [very thick, cyan] (0,8) -- (8,8) -- (8,16);
\node [scale=1.0, cyan] at (4,12) {$A_{1,1}$}; 

\draw [very thick, red] (0,4) -- (4,4) -- (4,8);
\node [scale=1.0, red] at (2,6) {$A_{2,1}$}; 
\draw [very thick, red] (8,12) -- (12,12) -- (12,16);
\node [scale=1.0, red] at (10,14) {$A_{2,2}$};

\draw [very thick, blue] (0,2) -- (2,2) -- (2,4);
\node [scale=0.75, blue] at (1,3) {$A_{3,1}$}; 
\draw [very thick, blue] (4,6) -- (6,6) -- (6,8);
\node [scale=0.75, blue] at (5,7) {$A_{3,2}$}; 
\draw [very thick, blue] (8,10) -- (10,10) -- (10,12);
\node [scale=0.75, blue] at (9,11) {$A_{3,3}$}; 
\draw [very thick, blue] (12,14) -- (14,14) -- (14,16);
\node [scale=0.75, blue] at (13,15) {$A_{3,4}$}; 

\end{tikzpicture}

\end{center}
\end{figure}

Observe that for fixed $n$ the random variables $\{a_{n,k};\, k=1,\ldots,2^{n-1}\}$ are independent, owing to the fact that they depend on the increments of $B$ on disjoint sets.
Now, thanks to the fact that $J_{n,k} \cap I_{n,k}=\varnothing$, for all $(r,s)\in A_{n,k}$ we can decompose $B_{r}-B_{s}$ into $(B_{r}-B_{\frac{(2k-1) \, t}{2^{n}}}) - (B_{s}-B_{\frac{(2k-1) \, t}{2^{n}}})$, where the two pieces of the difference are independent Brownian motions. Thus the following identity in law holds true:
\begin{equation*}
\left\{  B_{r}-B_{s} ; \, (r,s) \in A_{n,k} \right\}
\stackrel{(d)}{=}
\lcl  B_{r-\frac{(2k-1) \, t}{2^{n}}}- \widetilde {B}_{s-\frac{(2k-1) \, t}{2^{n}}} ; \, (r,s) \in A_{n,k}
 \rcl,
\end{equation*}
where $B$ and $ \widetilde {B}$ are two independent Brownian motions. With an additional change of variables $r-\frac{(2k-1)t}{2^n} \mapsto r$ and
$\frac{(2k-1)t}{2^n}-s \mapsto s$, this easily yields the following identity
\begin{eqnarray*}
a_{n,k}
&\stackrel{(d)}{=}&
\int_{A_{n,k}}  \gamma(r+s) \, 
\laa\left(B_{\frac{(2k-1)t}{2^n}+r}-\widetilde{B}_{\frac{(2k-1)t}{2^n}-s}\right)\, dsdr  \\
&\stackrel{(d)}{=}&
\int_0^{\frac{t}{2^n}}   \int_0^{\frac{t}{2^n}}
\gamma(r+s) \, \laa(B_r+\widetilde{B}_s) \, dsdr := a_{n}.
\end{eqnarray*}
Summarizing the considerations above, we have found that 
\begin{equation}\label{eq:dcp-Y}
Y
= 
 \sum_{n=1}^{\infty} \sum_{k=1}^{2^{n-1}} a_{n,k},
\end{equation}
where for each $n\ge 1$ the collection $\{a_{n,k};\, k=1,\ldots,2^{n-1}\}$ is a family of independent random variables such that 
\begin{equation*}
a_{n,k} \stackrel{(d)}{=} a_{n},
\quad\text{with}\quad
a_{n} \:=
\int_0^{\frac{t}{2^n}}   \int_0^{\frac{t}{2^n}}
\gamma(r+s) \, \laa(B_r+\widetilde{B}_s) \, dsdr,
\end{equation*}
where $B,\widetilde{B}$ are two independent Brownian motions. Notice that the transformation of $B_{r}-B_{s}$ into $B_r+\widetilde{B}_s$ we have achieved is essential for our future computations. Indeed, it will be translated into some singularities $(r-s)^{-1}$ in a neighborhood of 0 in $\R_{+}^{2}$ becoming some more harmless singularities of the form $(r+s)^{-1}$.
The proof is now decomposed in several steps.

\medskip
\noindent \textit{Step 1.} First we need to estimate the moments of
the random variable   $a_{n}$. We claim that for any
$\varepsilon>0$ there exist  constants $C_{\varepsilon,1}>0$ and
$C_2>0$  (which depend on $t$) such that
\begin{equation}  \label{equ5}
\be[ a_{n} ^m] \le C_{\varepsilon,1} m! \left(\frac{C_2 \varepsilon }{2^{n}}\right)^m.
\end{equation}
In order to show (\ref{equ5}), we first write
\begin{eqnarray*}
\be \lc a_{n} ^m\rc=\int_{[0,\frac{t}{2^n}]^m}
\int_{[0,\frac{t}{2^n}]^m}  \prod_{i=1}^m \gamma(r_i+s_i)\be
\lc\prod_{i=1}^m \laa (B_{r_i}+\widetilde{B}_{s_i})\rc dsdr  \,.
\end{eqnarray*}
Let $p_B$ be the joint density of $(B_{r_1}+\widetilde{B}_{s_1}, \dots,
B_{r_m}+\widetilde{B}_{s_m})$, which is a Schwartz function. Hence,
  using the Fourier transform  and the same considerations as for \eqref{eq:charac1}, we get 
\begin{eqnarray*}
\be \lc \prod_{i=1}^m
\laa(B_{r_i}+\widetilde{B}_{s_i})\rc =\int_{\R^{md}}\prod_{i=1}^m
\laa(x_i)p_B(x)dx=\int_{\R^{md}}  e^{-\frac{1}{2}\sum_{i,j=1}^m \xi_i\cdot \xi_j (r_i\wedge
r_j + s_i\wedge s_j)}\prod_{i=1}^m
\mu(d\xi_i)\,.
\end{eqnarray*}
 We now proceed as in the proof of Theorem \ref{thmmom1}, with an additional care in the computation of terms. Thanks to our assumption \eqref{t1} on $\gamma$  and the basic inequality $a+b \geq
2\sqrt{ab}$ for nonnegative $a$, $b$,  we have
\begin{align*}
&\be \lc a_{n} ^m\rc =\int_{[0,\frac{t}{2^n}]^m}
\int_{[0,\frac{t}{2^n}]^m}
\int_{\R^{md}} e^{-\frac{1}{2}\sum_{i,j=1}^m
\xi_i\cdot \xi_j(r_i\wedge r_j +s_i\wedge s_j)} \prod_{i=1}^m\mu(d\xi_i)  \prod_{i=1}^m
\gamma(r_i+s_i) dsdr\\
&\leq (2^{-\beta }C_{\beta})^m\int_{[0,\frac{t}{2^n}]^m}  \int_{[0,\frac{t}{2^n}]^m}
\int_{\R^{md}} e^{-\frac{1}{2}\sum_{i,j=1}^m
\xi_i\cdot \xi_j (r_i\wedge r_j+ s_i\wedge s_j)}  \prod_{i=1}^m \mu(d\xi_i) 
 \prod _{i=1}^m (r_is_i)^{-\frac{\beta}{2}} dsdr  ,
\end{align*}
and thus, invoking Cauchy-Schwarz inequality
with respect to the measure $\prod_{i=1}^m
(r_is_i)^{-\frac{\beta}{2}} dr ds$, we end up with
\begin{multline*}
\be \lc a_{n} ^m\rc 
\leq (2^{-\beta}C_{\beta })^m \int_{[0, \frac{t}{2^n}]^m}
\int_{[0, \frac{t}{2^n}]^m}  \left(\int_{\R^{md}} e^{-\sum_{i,j=1}^m
\xi_i\cdot \xi_j (r_i\wedge r_j)} \prod_{i=1}^m \mu(d\xi_i)
\right)^{\frac{1}{2}}\\ 
\times\left(\int_{\R^{md}}
e^{-\sum_{i,j=1}^m \xi_i\cdot \xi_j (s_i\wedge s_j)}\prod_{i=1}^m
\mu(d\xi_i) \right)^{\frac{1}{2}}\prod_{i=1}^m
(r_is_i)^{-\frac{\beta}{2}} dsdr.
\end{multline*}
Since in the above expression, both integrals with respect to the
measure $\prod_{i=1}^m \mu (d\xi_i)$ are  symmetric functions of the
$r_i$'s and $s_i$'s, we can restrict the integral to the region
$T_m(\frac{t}{2^n})$, where $T_m(t)$ has been defined in
(\ref{simplex}).  Therefore,  similarly to \eqref{eq:mAn} and with  the convention $r_0=0$, we obtain that for any $\varepsilon>0$ the expectation $\be [ a_{n} ^m]$ is bounded by 
\begin{align*}
& (2^{-\beta }C_{\beta })^m (m!)^2 
 \left(\int_{T_m(\frac{t}{2^n})}  \left(
\int_{\R^{md}} e^{-\sum_{i=1}^m(r_i-r_{i-1})|\xi_i+\cdots
+\xi_m|^2} \prod_{i=1}^m\mu(d\xi_i)\right)^{\frac{1}{2}}\prod_{i=1}^m |r_i|^{-\frac{\beta}{2}} dr\right)^2  \\
&\leq (2^{-\beta}C_{\beta})^m (m!)^2
\left(\int_{T_m(\frac{t}{2^n})} \prod_{i=1}^m
\left(C_{\varepsilon}+\frac{\varepsilon}{(r_i-r_{i-1})^{1-\beta}}\right)^{\frac{1}{2}}\prod_{i=1}^m
(r_i-r_{i-1})^{-\frac{\beta}{2}}dr\right)^2\,,
\end{align*}
where we have used Lemma \ref{lemeps} and 
we have bounded
$r_i^{-\frac{\beta}{2}}$ by $(r_i-r_{i-1})^{-\frac{\beta}{2}}$. We now resort to the
inequality $(a+b)^{\frac{1}{2}} \leq
a^{\frac{1}{2}}+b^{\frac{1}{2}}$ in order to get
\begin{eqnarray*}
\be \lc   a_{n} ^m \rc &\le & (2^{-\beta }C_{\beta })^m (m!)^2\left(
\int_{T_m(\frac{t}{2^n})}
\prod_{i=1}^m\left(\sqrt{C_{\varepsilon}}+\frac{\sqrt{\varepsilon}}{(r_i-r_{i-1})^{\frac{1-\beta
}{2}}}\right)\prod_{i=1}^m (r_i-r_{i-1})^{-\frac{\beta
}{2}}dr\right)^2\\ &=&(2^{-\beta }C_{\beta})^m (m!)^2
\left(\int_{T_m(\frac{t}{2^n})} \sum_{\theta \in
\{0,1\}^m}\prod_{i=1}^m C_{\varepsilon}^{\frac{\theta_i}{2}
}\varepsilon^{\frac{1}{2}(1-\theta_i)}(r_i-r_{i-1})^{  \frac{\beta
-1}{2}(1-\theta_i)-\frac{\beta}{2}}dr\right)^2\,.
\end{eqnarray*}
 Hence, a direct application of Lemma \ref{lemsimplex}   shows that  there exists a positive constant $K$ such that
\begin{equation*}
\be \lc a_{n} ^m\rc 
\le  
K ^{m} (m!)^2 \left( \sum_{l=0}^m {m\choose l}
C_{\varepsilon}^{\frac{l}{2}}\varepsilon^{\frac{m-l}{2}}\frac{
(\frac{t}{2^n})^{\frac{(1-\beta)l}{2}+\frac{m}{2}}}{\Gamma
(\frac{(1-\beta)l}{2}+\frac{m}{2}+1)}\right)^2.
\end{equation*}
 We now simply bound ${m\choose l}$ by $2^{m}$ and recall that $(x/3)^{x}\le \Gamma(x+1)\le x^{x}$ for $x\ge 0$. Allowing the constant $K$ to change from line to line, this yields 
\begin{eqnarray*}
\be \lc a_{n} ^m\rc  
&\leq&K ^{m}
(m!)^2\left(\sum_{l=0}^m
C_{\varepsilon}^{\frac{l}{2}}\varepsilon^{\frac{m-l}{2}}\frac{
\left(\frac{t}{2^n}\right)^{\frac{(1-\beta)l}{2}}(\frac{t}{2^n})^{\frac{m}{2}}}{(m!)^{\frac{1}{2}}
(l!)^{\frac{1-\beta}{2}}}
\right)^2\\ 
&\leq&
K ^{m} m!  \varepsilon^m  \left(\frac{t}{2^{n}}\right)^m  \left(
\sum_{l=0}^{\infty}
\frac{C_{\varepsilon}^{\frac{l}{2}}\varepsilon^{-\frac{l}{2}} t^{\frac{1-\beta}{2}l}
}
{(l!)^{\frac{1-\beta}{2}}}
\right)^2\,.
\end{eqnarray*}
This completes the proof of (\ref{equ5}) with
 $C_{\varepsilon,1}=  (  \sum_{l=0}^{\infty}C_{\varepsilon}^{\frac{l}{2}}
 \varepsilon^{-\frac{l}{2}}t^{\frac{1-\beta}{2}l}(l!)^{\frac{\beta-1}{2}})^2$, which is finite because this series is
 convergent,
  and  $C_2=    K t$.

\medskip
\noindent
\textit{Step 2.}
 We now start from relation \eqref{equ5} and prove the finiteness of exponential moments for the random variable $Y$. It turns out that centering is useful in this context, and we thus define $\overline{a}_{n,k}=a_{n,k}- \be[a_{n,k}]$. Then $\be [
\overline{a}_{n,k}]=0$, and for any integer $m \geq 2$ notice that:
\begin{eqnarray*}
\be \lc (\overline{a}_{n,k})^m\rc \leq 2^{m-1}\lp \be \lc a_{n,k}^m\rc + \lp  \be [
a_{n,k}] \rp ^m\rp \leq 2^m  \be [a_{n,k}^m]\,.
\end{eqnarray*}
 Also recall that $a_{n,k}\stackrel{(d)}{=}a_{n}$.  Thus, using (\ref{equ5})
\begin{eqnarray*}
\be \lc \exp (\lambda \overline{a}_{n,k})\rc &=&1+
\sum_{m=2}^{\infty}\frac{\lambda ^m}{m!} \be \lc (\overline{a}_{n,k})^m \rc
\le 1+\sum_{m=2}^{\infty}\frac{(2\lambda)^m}{m!} \be \lc (a_{n,k})^m\rc\\
&\leq&1+\sum_{m=2}^{\infty} C_{\varepsilon,1} 
\left( \frac{2 C_2 \lambda \varepsilon}{  2^{n}}\right)  ^m  \,.
\end{eqnarray*}
Now choose and fix $\varepsilon$ such that $C_2 \lambda \varepsilon  2^{-n+1} \leq \frac{1}{2}$, and we obtain the bound
\begin{equation}  \label{equ6}
\be \lc  \exp (\lambda \overline{a}_{n,k})\rc \leq 1 + \frac{C_{\varepsilon,2}  \lambda^2}{2^{2n}} ,
\end{equation}
for some positive constant $C_{\varepsilon,2}$. Next we choose
$0<h<1$, define $b_N=\prod_{j=2}^N\big(1-2^{-h(j-1)}\big)$, and
notice that $\lim_{N \to \infty}b_N=b_{\infty}> 0$. Then, by 
H\"older's inequality, for all $N \geq 2$ we have
\begin{eqnarray*}
&&\be \lc \exp \left(\lambda b_N \sum_{n=1}^N
\sum_{k=1}^{2^{n-1}}\overline{a}_{n,k}\right)\rc\\
&\leq&\left[\be \lc \exp \left(\lambda
b_{N-1}\sum_{n=1}^{N-1}\sum_{k=1}^{2^{n-1}}\overline{a}_{n,k}\right)\rc \right]^{1-2^{-h(N-1)}}\left[\be
\lc \exp\left(\lambda b_N
2^{h(N-1)}\sum_{k=1}^{2^{N-1}}\overline{a}_{N,k}\right)\rc\right]^{2^{-h(N-1)}},
\end{eqnarray*}
and taking into account the independence of the $\{a_{N,k}; k \le 2^{N-1}\}$ plus the identity $a_{N,k}\stackrel{(d)}{=}a_{N}$, the above expression is equal to
\begin{equation*}
\left(\be\lc\exp \left(\lambda
b_{N-1}\sum_{n=1}^{N-1}\sum_{k=1}^{2^{n-1}}\overline{a}_{n,k}\right)\rc\right)^{1-2^{-h(N-1)}}
\!\!
\left(\be
\lc\exp\left(\lambda b_N
2^{h(N-1)}\overline{a}_{N}\right)\rc\right)^{2^{(1-h)(N-1)}} 
:=A_N B_N\,.
\end{equation*}
We now appeal to the estimate (\ref{equ6}) and the elementary inequality $1+x \leq e^{x}$, valid for any $x \in
\R$. This yields 
\begin{eqnarray*}
B_N &\leq&\left(1+C_{\varepsilon,2} b_N^2  2^{-2N}\lambda^2 2^{2h(N-1)}\right)^{2^{(1-h)(N-1)}}
\leq\exp \left(C_{\varepsilon,3} 2^{-N(1-h)}\right),
\end{eqnarray*}
for some positive constant $C_{\varepsilon,3}$. 
Notice that this is where the centering argument on $a_{n,k}$ is crucial. Indeed, without centering  we would get a factor $2^{-N}$ instead of $2^{-2N}$ in the left hand side of the above expression, and $B_N$ would not be uniformly bounded.
Thus, recursively we get
\begin{eqnarray*}
\be \lc \exp \left(\lambda b_N \sum_{n=1}^N
\sum_{k=1}^{2^{n-1}}\overline{a}_{n,k}\right)\rc
\leq 
\exp
\left(\sum_{n=2}^N C 2^{-n(1-h)}\right)  \,
\be \lc \exp(\overline{a}_{1,1})\rc< \infty\,.
\end{eqnarray*}
 Recalling now from \eqref{eq:dcp-Y} that $Y-\be[Y]=\sum_{n=1}^{\infty} \sum_{k=1}^{2^{n-1}} \bar{a}_{n,k}$ and applying Fatou's lemma, we finally get 
\begin{eqnarray*}
\be \lc \exp\lp\lambda b_{\infty}(Y-\be [Y])\rp \rc< \infty ,
\end{eqnarray*}
which completes the proof.
\end{proof}

\smallskip
Our next result is an approximation result  for the Feynman-Kac functional which will be used in the next section (see Theorem \ref{thmFK1}). Towards this aim, for any $\ep, \delta>0 $  we define
\begin{equation}  \label{FKaprox}
u^{\varepsilon,\delta}_{t,x}=\be \lc u_0(B^x_t)\exp
(V_{t,x}^{\varepsilon,\delta})\rc\,,
\end{equation}
 where  $ V_{t,x}^{\varepsilon,\delta}=W(A^{\ep.\delta}_{t,x})$   and $A_{t,x}^{\varepsilon,\delta}$
 is defined
 in
(\ref{m2}).

\begin{proposition} \label{proplim}
For any $p\ge 2$ and $T>0$ we have
\begin{equation} \label{m9}
\lim_{\ep \downarrow 0} \lim_{ \delta \downarrow 0}  \sup_{(t,x) \in
[0,T] \times \R^d} \be \lc  |u^{\varepsilon,\delta}_{t,x}
-u_{t,x}|^p \rc=0.
\end{equation}
\end{proposition}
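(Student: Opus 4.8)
The plan is to compare $u^{\ep,\delta}_{t,x}$ with $u_{t,x}$ through the intermediate quantity $u^{\ep}_{t,x}=\be_{B}\lc u_0(B^x_t)\exp(V^{\ep}_{t,x})\rc$, where $V^{\ep}_{t,x}$ is the \emph{unaveraged} functional of \eqref{equ3}. By the triangle inequality in $L^p(\Omega)$ it suffices to show, for each fixed $\ep>0$, that $\lim_{\delta\downarrow 0}\sup_{(t,x)}\be\lc|u^{\ep,\delta}_{t,x}-u^{\ep}_{t,x}|^p\rc=0$, and separately that $\lim_{\ep\downarrow 0}\sup_{(t,x)}\be\lc|u^{\ep}_{t,x}-u_{t,x}|^p\rc=0$. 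The point of this splitting is that the inner limit $\delta\downarrow 0$ can be carried out with $\ep$ frozen, where only crude deterministic bounds are needed, so that the delicate exponential integrability of Theorem \ref{thmexp} has to be invoked only for the unaveraged functionals $V^{\ep}$ and $V$. This sidesteps redoing Le Gall's argument for the time-averaged kernel \eqref{m3}.

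All three differences are treated by one scheme. Conditionally on $B$ each of $V^{\ep,\delta}_{t,x}$, $V^{\ep}_{t,x}$, $V_{t,x}$ is a centered Gaussian, so $\be_W\lc\exp(\la G)\rc=\exp(\tfrac{\la^2}{2}\var_{W}(G))$, which ties exponential moments to the variances handled in \eqref{var} and Theorem \ref{thmexp}. For two such functionals $G_1,G_2$ with associated solutions $u^{(1)},u^{(2)}$, the elementary inequality $|e^{a}-e^{b}|\le|a-b|(e^{a}+e^{b})$ and $\|u_0\|_\infty$ give $|u^{(1)}_{t,x}-u^{(2)}_{t,x}|\le\|u_0\|_\infty\,\be_B\lc|G_1-G_2|(e^{G_1}+e^{G_2})\rc$. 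Taking the $L^p(\Omega)$-norm in $W$, Minkowski's integral inequality moves $\be_B$ outside, Hölder in $W$ (exponents $2p,2p$) splits the integrand, and a final Cauchy--Schwarz in $B$ yields
\[
\be\lc|u^{(1)}_{t,x}-u^{(2)}_{t,x}|^p\rc^{1/p}\le c_{p}\|u_0\|_\infty\,\be\lc(G_1-G_2)^2\rc^{1/2}\lp\be_B\lc e^{2p\var_{W}(G_1)}\rc+\be_B\lc e^{2p\var_{W}(G_2)}\rc\rp^{1/2},
\]
where I used the Gaussian identities $\|G_1-G_2\|_{L^{2p}(W)}\asymp\var_{W}(G_1-G_2)^{1/2}$, $\|e^{G_i}\|_{L^{2p}(W)}=e^{p\var_{W}(G_i)}$, and $\be_B\lc\var_{W}(G_1-G_2)\rc=\be\lc(G_1-G_2)^2\rc$. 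Thus everything reduces to an $L^2(\Omega)$-estimate of the exponent difference together with a uniform bound on exponential moments of the variances.

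For the inner limit I take $G_1=V^{\ep,\delta}_{t,x}$, $G_2=V^{\ep}_{t,x}$ with $\ep$ fixed. Bounding the Fourier integrand in \eqref{eq:cov-varphi-epsilon} by $1$ and using \eqref{t1} gives the deterministic bound $\var_{W}(V^{\ep,\delta}_{t,x})\vee\var_{W}(V^{\ep}_{t,x})\le M_\ep:=c\,t^{2-\beta}\int_{\R^d}e^{-\ep^2|\xi|^2}\mu(d\xi)$, which is finite (since $\mu$ is tempered) and uniform over $\delta$, $x$ and $t\le T$. Hence the exponential-moment factor is at most $2e^{2pM_\ep}$, and it remains to prove $\sup_{(t,x)}\be\lc(V^{\ep,\delta}_{t,x}-V^{\ep}_{t,x})^2\rc\to 0$ as $\delta\downarrow 0$. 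By translation invariance in $x$ this quantity does not depend on $x$, and a computation analogous to Proposition \ref{propdelta} (using stationarity of Brownian increments) dominates it, uniformly in $t\le T$, by an integral over $[0,T]^2\times\R^d$ whose integrand vanishes as $\delta\downarrow 0$; dominated convergence then gives the claim, which is the uniform version of Remark \ref{rem4.3}.

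For the outer limit I take $G_1=V^{\ep}_{t,x}$, $G_2=V_{t,x}$. Computing as in Proposition \ref{propdelta} using \eqref{eq:cov-Vt-identity}, the $L^2$ factor equals $\int_{\R^d}(1-e^{-\frac12\ep^2|\xi|^2})^2\int_{[0,t]^2}e^{-\frac12|\xi|^2|u-v|}\gamma(u-v)\,du\,dv\,\mu(d\xi)$, which is $x$-independent and increasing in $t$; its supremum over $t\le T$ is attained at $t=T$ and tends to $0$ as $\ep\downarrow 0$ by dominated convergence, the dominating function being integrable thanks to the bound $\si_T^2<\infty$ of Proposition \ref{propdelta}. \emph{The main obstacle is the exponential-moment factor}, i.e. establishing $\sup_{\ep,t,x}\be_B\lc\exp(2p\,\var_{W}(V^{\ep}_{t,x}))\rc<\infty$ and the analogue for $V_{t,x}$. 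The latter is exactly Theorem \ref{thmexp}, since $\be\lc\exp(\la V_{t,x})\rc=\be_B\lc\exp(\tfrac{\la^2}{2}\var_{W}(V_{t,x}))\rc$. For the former I observe that $\var_{W}(V^{\ep}_{t,x})$ admits the very same Le Gall decomposition \eqref{eq:dcp-Y} as $\var_{W}(V_{t,x})$, but with $\mu$ replaced throughout by the smaller measure $e^{-\ep^2|\xi|^2}\mu(d\xi)$; since every estimate in the proof of Theorem \ref{thmexp} is monotone in $\mu$ and uses only \eqref{t2}, all constants pass over uniformly in $\ep$. Combining the two factors yields the outer limit and hence \eqref{m9}. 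The two-step splitting is precisely designed so that this uniform exponential integrability is needed only for the unaveraged functionals, where the replacement $\mu\rightsquigarrow e^{-\ep^2|\xi|^2}\mu$ makes Theorem \ref{thmexp} directly applicable.
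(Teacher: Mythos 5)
Your proof is correct, and while it shares the paper's overall skeleton --- reduce the $L^p$ convergence of $u^{\varepsilon,\delta}$ to the $L^2$ convergence of the Gaussian exponents together with a uniform exponential-moment bound --- it handles the one genuinely delicate point by a different route. The paper proves the uniform bound \eqref{m6} directly for the doubly regularized functional $V^{\varepsilon,\delta}_{t,x}$: its key ingredient is the deterministic estimate \eqref{j3}, which converts the $\delta$-averaged time singularity $\frac{1}{\delta^2}\int_0^\delta\int_0^\delta |u+s-v-r|^{-\beta}\,ds\,dr$ back into $c_{T,\beta}|u-v|^{-\beta}$, so that the moments of $\| A^{\varepsilon,\delta}_{t,x}\|_{\mathcal{H}}^{2n}$ are dominated by those of $\int_0^t\int_0^t |u-v|^{-\beta}\laa(B_u-B_v)\,du\,dv$ (relation \eqref{m8}) and Theorem \ref{thmexp} applies. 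You instead insert the intermediate approximation built from the unaveraged $V^{\varepsilon}_{t,x}$; with $\varepsilon$ frozen the conditional variance of $V^{\varepsilon,\delta}_{t,x}$ is bounded \emph{deterministically} by $c\,T^{2-\beta}\int_{\R^d} e^{-\varepsilon^2|\xi|^2}\mu(d\xi)$, so the inner limit needs no Le Gall-type argument at all, and the delicate uniform-in-$\varepsilon$ exponential integrability is required only for $V^{\varepsilon}$, where it follows from Theorem \ref{thmexp} by monotonicity in the spectral measure ($e^{-\varepsilon^2|\xi|^2}\mu\le\mu$, the regularized covariance staying nonnegative). What the paper's approach buys is economy (one limit argument, one key estimate \eqref{j3}); what yours buys is that the hard exponential bound is invoked only where Theorem \ref{thmexp} applies verbatim, and your quantitative difference inequality makes the uniformity of the supremum over $(t,x)\in[0,T]\times\R^d$ (translation invariance in $x$, monotonicity in $t$) more transparent than the paper's softer uniform-integrability reduction. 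The only step you leave as sketchy as the paper does is the uniform-in-$t$ $L^2$ convergence $V^{\varepsilon,\delta}_{t,x}\to V^{\varepsilon}_{t,x}$, where the boundary effect of the cutoff $\delta\wedge(t-r)$ in \eqref{m3} near $r=t$ deserves a word (it contributes a set of $r$ of measure $O(\delta)$); this is routine and does not affect the validity of the argument.
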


 \begin{proof}
First,  we recall that (see  Proposition \ref{propdelta} and Remark  \ref{rem4.3})  for any fixed $t\ge0 $ and $x\in \R^d$ the random variable   $V^{\varepsilon,\delta}_{t,x} $ converges in $L^2(\Omega)$ to
 $V_{t,x}$
 if we let first $\delta$ tend to zero and later $\ep$ tend to zero. Then in order to show (\ref{m9})  it suffices to check that for any  $\lambda \in \R$
 \begin{equation}   \label{m6}
 \sup_{\ep, \delta}\sup_{(t,x) \in [0,T] \times \R^d}  \be \lc \exp \lp \lambda V^{\varepsilon,\delta}_{t,x}    \rp     \rc <\infty.
 \end{equation}
 Taking first the expectation with respect to the noise $W$ yields
 \[
  \be \lc \exp \lp \lambda V^{\varepsilon,\delta}_{t,x}    \rp     \rc
  =\be_B \lc \exp \lp \frac {\lambda^2} 2 \| A^{\ep,\delta}_{t,x} \| ^2_{\mathcal{H}} \rp\rc.
  \]
  Expanding the exponential into a power series, we will need to bound  the moments of the
  random variable   $\| A^{\ep,\delta}_{t,x} \| ^2_{\mathcal{H}}$.
  To do this, we use  formula (\ref{m7}) with $B= \widetilde{B}$ and $\ep=\ep'$,  
  $\delta=\delta'$. Computing the mathematical expectation of this expression,
  we end up with:
  \begin{multline*}
  \be \lc \left  \| A_{t,x}^{\varepsilon,\delta} \right\|^{2n}  _{\mathcal{H}}\rc
=\frac 1{\delta^{2n}}  \int_{O_{\delta,n}} \int_{\R^{dn}}
\exp\lp- \frac 12 \sum_{i,j =1}^n  \be _B [(B_{u_i}-B_{v_i})(B_{u_j}-B_{v_j}) ] \langle \xi_i,\xi_j \rangle\rp\\
\times e^{-\varepsilon \sum_{l=1}^n|\xi_l|^2} \prod_{l=1}^n
\gamma(u_l+s_l-v_l-\widetilde{s}_l) \, \mu(d\xi) \, dsd\widetilde{s}dudv.
\end{multline*}
Thanks to the estimate
\begin{equation} \label{j3}
\sup_{0\le \delta \le 1} \frac 1{\delta^2} \int_0^\delta \int_0^\delta |u+ s -v-r|^{-\beta} dsdr \le c_{T,\beta} |u-v| ^{-\beta},
\end{equation}
 which holds  for any $u,v \in [0,T]$,  and  owing to assumption (\ref{t1}),  we get
\begin{equation} \label{m8}
  \be \lc \left  \| A_{t,x}^{\varepsilon,\delta} \right\|^{2n}  _{\mathcal{H}}\rc
\le c_{T,\beta}^n      \be_B \lc \left|  \iot \iot |u-v|^{-\beta} \laa(B_u-B_v) dudv \right|^{n}     \rc.
\end{equation}
It is now readily checked that  (\ref{m6}) follows from (\ref{m8}) and Theorem  \ref{thmexp}.
 \end{proof}

\subsubsection{Time independent noise}
Suppose that $W$ is the time independent noise introduced in Section \ref{sec2.2}. 
The Feynman-Kac functional is defined as
\begin{equation}\label{FK2}
u_{t,x}=\be \lc u_0(B^x_t)\exp \left(\int_0^t \int_{\R^d} \delta _0(B_{r}^x-y)W(dy)dr\right)\rc\,,
\end{equation}
where  $B^x=\{B_t+x,  t\ge 0\}$ is a $d$-dimensional Brownian motion
independent of $W$, starting from $x$m and $ u_0\in C_b(\mathbb{R}^d)$  is the initial condition. 

  As in the case of a time dependent noise,  to give a meaning to this functional for every $t>0$,  $x\in \mathbb{R}^d$ and $\varepsilon>0$ we introduce the family of random variables
\[
V_{t,x}^{\varepsilon}=\int_0^t \int_{\R^d} p_{\varepsilon}(B_{r}^x-y)W(dy)dr\,,
\]
  Then, if the spectral measure
of the noise $\mu$ satisfies  condition (\ref{mu1}), the family
$V_{t,x}^{\varepsilon}$ converges in $L^2$ to a limit denoted by
\begin{equation}\label{equ8}
V_{t,x}=\int_0^t \int_{\R^d} \delta_0(B_{r}^x-y) W(dy)dr\,.
\end{equation}
Conditional on $B$, $V_{t,x}$ is a Gaussian random variable with mean $0$ and variance
\begin{equation}
{\bf Var}_W (V_{t,x})=\int_0^t \int_0^t \laa(B_r-B_s)drds\,.
\end{equation}
Furthermore, for any $\lambda \in \R$,  we have $\be \lc \exp (\lambda V_{t,x})\rc< \infty$. These properties can be obtained using the same arguments as in the time dependent case and we omit the details.

\subsection{H\"older continuity of the Feynman-Kac functional} 
In this
subsection, we establish the H\"older continuity in space and time of
the the Feynman-Kac functional given by formulas (\ref{FK}) and
(\ref{FK2}). These regularity properties will hold under some
additional integrability assumptions on the measure $\mu$.  To
simplify the presentation we will assume that $u_0=1$,  and as usual we separate the time dependent and independent cases.

\subsubsection{Time dependent noise }
For the case
of a time dependent noise, we will make use of the following
condition  in order to ensure H\"older type regularities.

\begin{hypothesis}\label{hyp:mu-holder}
Let $W$ be a space-time stationary Gaussian noise with covariance
structure encoded by $\gamma$ and $\laa$. We assume that condition
(\ref{t1}) in Hypothesis \ref{hyp:mu2} holds for some $\beta>0$ and
the spectral measure $\mu$  satisfies \begin{equation*}
\int_{\R^{d}}\frac{\mu(d\xi)}{1+|\xi|^{2(1-\beta-\al)}} <\infty
\end{equation*}
for some $\al \in (0,1-\beta)$.
\end{hypothesis}

\begin{theorem}\label{thmHolder}
Assume Hypothesis \ref{hyp:mu-holder}.   Let $u$ be the process
introduced by relation \eqref{FK} with $u_0=1$, namely:
 \begin{equation}\label{eq:def-FK-initial-1}
u_{t,x} = \be_{B}\lc \exp\lp V_{t,x} \rp \rc, \quad\text{where}\quad
V_{t,x} = \int_{0}^{t}  \int_{\R^d} \delta_0(B^x_{t-r}-y) W(dr,dy).
\end{equation}
Then $u$ admits a version which is $(\ga_{1},\ga_{2})$-H\"older
continuous on any compact set of the form $[0,T]\times[-M,M]^{d}$,
with any $\ga_{1} < \frac{\al}{2}$, $\ga_{2}< \al$ and $T,M>0$.
\end{theorem}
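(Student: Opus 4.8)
The plan is to apply a (multiparameter) Kolmogorov continuity criterion, so the heart of the matter is to establish, for every $p\ge 2$, increment bounds of the form
\[
\be\big[|u_{t,x}-u_{s,y}|^p\big]\le C_{p,T}\big(|t-s|^{p\al/2}+|x-y|^{p\al}\big)
\]
uniformly on $[0,T]\times[-M,M]^d$; letting $p\to\infty$ then produces every $\ga_1<\al/2$ and $\ga_2<\al$. The first reduction is a linearization: I would write $u_{t,x}-u_{s,y}=\be_B[e^{V_{t,x}}-e^{V_{s,y}}]$ and use $e^a-e^b=(a-b)\int_0^1 e^{b+\te(a-b)}d\te$. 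Taking the $L^p$-norm in $W$, then Minkowski, Cauchy--Schwarz in $W$ and Cauchy--Schwarz in $B$, and recalling from Proposition \ref{propdelta} that conditionally on $B$ the variables $V_{t,x},V_{s,y}$ are centered Gaussian, this reduces the estimate to
\[
\|u_{t,x}-u_{s,y}\|_{L^p(\Omega)}\le C_p\,\big(\be_B[\var_W(V_{t,x}-V_{s,y})]\big)^{1/2}\,\big(\be_B[e^{c_p\max(\var_W V_{t,x},\,\var_W V_{s,y})}]\big)^{1/2}.
\]
The second factor is bounded uniformly over $[0,T]\times\R^d$ by Theorem \ref{thmexp}, using that by stationarity $\var_W V_{t,x}$ equals the quantity $Y$ of that theorem and is independent of $x$. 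Hence everything reduces to showing $\be_B[\var_W(V_{t,x}-V_{s,y})]\le C_T(|t-s|^\al+|x-y|^{2\al})$, and by the triangle inequality in $L^2(\Omega)$ it suffices to treat the pure space and pure time increments separately.

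For the space increment, I would use the spectral representation of the covariance as in \eqref{eq:cov-Vt-identity} together with $\be_B[e^{\imath\langle\xi,B_u-B_v\rangle}]=e^{-\frac12|\xi|^2|u-v|}$, which gives
\[
\be_B[\var_W(V_{t,x}-V_{t,y})]=\int_{\R^d}\Big(\int_0^t\!\!\int_0^t e^{-\frac12|\xi|^2|r-r'|}\ga(r-r')\,dr\,dr'\Big)\,|e^{\imath\langle\xi,x\rangle}-e^{\imath\langle\xi,y\rangle}|^2\,\mu(d\xi).
\]
Bounding the time integral by $c/(1+|\xi|^{2-2\beta})$ exactly as in the proof of Proposition \ref{propdelta}, and using $|e^{\imath\langle\xi,x\rangle}-e^{\imath\langle\xi,y\rangle}|^2\le C|\xi|^{2\al}|x-y|^{2\al}$ together with $|\xi|^{2\al}/(1+|\xi|^{2-2\beta})\le C/(1+|\xi|^{2(1-\beta-\al)})$, Hypothesis \ref{hyp:mu-holder} yields $\be_B[\var_W(V_{t,x}-V_{t,y})]\le C_T|x-y|^{2\al}$, hence the spatial exponent $\al$.

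The time increment is the crux, and I expect it to be the main obstacle. For $s<t$ set $h=t-s$. The difficulty is the time reversal in \eqref{Vtx}: in $V_{t,x}$ the noise time $r$ is paired with the Brownian position $B^x_{t-r}$, so $V_{t,x}$ and $V_{s,x}$ are misaligned. I would resolve this by splitting the noise-time integral at $r=h$, writing $V_{t,x}=V^{(1)}+V^{(2)}$, with $V^{(1)}$ carrying the noise on $[0,h]$ and $V^{(2)}$ on $[h,t]$. A change of variables identifies $V^{(2)}$ with $V_{s,x}$ carrying the noise shifted in time by $h$ but with the \emph{same} spatial kernel $\delta_0(B^x_{s-r}-\cdot)$, so that $V_{t,x}-V_{s,x}=V^{(1)}+W(\tau_h A_{s,x}-A_{s,x})$, where $\tau_h$ denotes the time shift. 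The term $V^{(1)}$ is handled by a scaling argument in the variance integral, which bounds $\be_B[\var_W V^{(1)}]$ by $C_T h^{1+\al}\le C_T h^{\al}$. For the shift term, computing $\be_B[\var_W W(\tau_h A_{s,x}-A_{s,x})]$ produces a \emph{second difference} $2\rho(u)-\rho(u+h)-\rho(u-h)$ with $\rho(u)=e^{-\frac12|\xi|^2|u|}$, integrated against $\ga(u)\le C_\beta|u|^{-\beta}$. The essential point is that no regularity of $\ga$ is available, so the second difference must land on the \emph{smooth} factor $\rho$ rather than on $\ga$; this is exactly what the above change of variables accomplishes. A one-dimensional scaling $w=\tfrac12|\xi|^2u$ then reduces the inner integral to a bound of the type $C|\xi|^{-2(1-\beta)}\min\big((|\xi|^2h)^{2-\beta},1\big)$, and splitting the $\mu$-integral at $|\xi|=1$ --- small frequencies contributing $h^{2-\beta}$ through $\mu(\{|\xi|\le 1\})<\infty$, large frequencies contributing $h^{\al}$ through Hypothesis \ref{hyp:mu-holder} --- gives $\be_B[\var_W(V_{t,x}-V_{s,x})]\le C_T(t-s)^{\al}$. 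Combined with Step 1 this produces the temporal exponent $\al/2$, and a version of Kolmogorov's criterion in the $(d+1)$ variables $(t,x)$ (with anisotropic exponents $\al/2$ and $\al$) then delivers the asserted H\"older regularity.
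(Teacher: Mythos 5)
Your overall architecture --- Kolmogorov's criterion, the linearization $|e^a-e^b|\le(e^a+e^b)|a-b|$ combined with Cauchy--Schwarz and the exponential bound of Theorem \ref{thmexp}, the reduction to $\be_B[\var_W(V_{t,x}-V_{s,y})]$, and the treatment of the space increment via $|1-\cos\langle\xi,x-y\rangle|\le|\xi|^{2\al}|x-y|^{2\al}$ together with Hypothesis \ref{hyp:mu-holder} --- coincides with the paper's proof, and those parts are correct. Your term $V^{(1)}$ also has the same $\be_B$-variance as the paper's $A_1$ (both reduce to $\int_0^h\int_0^h\gamma(u-v)\be_B[\laa(B_u-B_v)]\,du\,dv$) and is handled identically. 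The divergence, and the problem, lies in the shift term, which you yourself flag as the crux.

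The decomposition $V_{t,x}-V_{s,x}=V^{(1)}+W(\tau_hA_{s,x}-A_{s,x})$ does \emph{not} produce a second difference of $\rho_\xi(u)=e^{-\frac12|\xi|^2|u|}$ integrated against $\gamma$. Computing directly,
\[
\langle\tau_hA_{s,x},A_{s,x}\rangle_{\ch}=\int_0^s\int_0^s\gamma(u-v+h)\,\laa(B_{s-u}-B_{s-v})\,du\,dv,
\]
so that after symmetrizing and taking $\be_B$ one finds, with $w=u-v$,
\[
\be_B\lc\var_W\lp W(\tau_hA_{s,x}-A_{s,x})\rp\rc=\int_0^s\int_0^s\int_{\R^d}\lc2\gamma(w)-\gamma(w+h)-\gamma(w-h)\rc e^{-\frac12|w||\xi|^2}\,\mu(d\xi)\,du\,dv.
\]
The increment falls on $\gamma$, for which only the upper bound \eqref{t1} and no regularity whatsoever is assumed, so this expression yields no smallness in $h$ as it stands: aligning the Brownian times has pushed the entire lag $h$ into the temporal covariance, i.e.\ your change of variables accomplishes the opposite of what you intend. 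One can transfer the difference back onto $\rho_\xi$ by the substitutions $w\mapsto w\pm h$, but since $[0,s]^2$ is not translation invariant this generates boundary strips of width $h$ carrying the singularity of $\gamma$, which must be estimated separately --- a step absent from your sketch and on which your claimed bound $C|\xi|^{-2(1-\beta)}\min((|\xi|^2h)^{2-\beta},1)$ silently relies. The paper sidesteps this by splitting at noise time $r=s$ rather than $r=h$: it keeps the noise on $[0,s]$ with weight $\gamma(u-v)$ and compares the kernels $\delta_0(B^x_{t-r}-\cdot)$ and $\delta_0(B^x_{s-r}-\cdot)$, so that after taking $\be_B$ one gets a \emph{first} difference $e^{-\frac12|u-v||\xi|^2}-e^{-\frac12|t-s-u+v||\xi|^2}$ over the common domain $[0,s]^2$, disposed of with $|e^{-x}-e^{-y}|\le(e^{-x}+e^{-y})|x-y|^{\al}$ and Hypothesis \ref{hyp:mu-holder}. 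You should either adopt that decomposition or carry out the summation by parts with an explicit control of its boundary terms.
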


\begin{proof}
Owing to standard considerations involving Kolmogorov's criterion,
it is sufficient to prove the following bound for all large $p$ and
$s,t \in [0,T] $, $x,y \in\R^{d}$ with $T>0$:
\begin{equation}\label{eq:bnd-moments-u}
\be\lc |u_{t,x} - u_{s,y}|^{p} \rc \le c_{p,T} \lp |t-s|^{\frac{\al
p}{2}} + |x-y|^{\al p} \rp .
\end{equation}
We now focus on the proof of (\ref{eq:bnd-moments-u}). From   the elementary relation $|e^{x}-e^{y}|\le (e^{x }+e^{ y})
|x-y|$, valid for $x,y\in\R$ and applying the Cauchy-Schwarz inequality  it
follows
\begin{eqnarray}\label{eq:reduc-to-bnd-V}
\be\lc |u_{t,x} - u_{s,y}|^{p} \rc &=& \be_{W}\lc \left|\be_{B}\lc
\exp\lp V_{t,x}) \rp \rc
- \be_{B}\lc \exp\lp V_{s,y}) \rp \rc\right|^{p} \rc  \notag \\
&\le&
 \be_W  \lcl \be_B^{p}  \lc   \left(\exp  (V_{t,x})+\exp (V_{s,y}) \right) |V_{t,x} -V_{s,y}|\rc \rcl \\
&\leq&\be_W^{1/2} \! \lcl \be_B^{p} \left[   \left(\exp(V_{t,x})+\exp
(V_{s,y})\right)^2\right]  \rcl
\be_W^{1/2} \! \lcl
\be_B^{p} \left[
|V_{t,x}-V_{s,y}|^2\right]\rcl. \notag
\end{eqnarray}
We now resort to our exponential bound of Theorem \ref{thmexp} for
$V_{t,x}$, Minkowsky inequality and the relation between $L^{p}$ and
$L^{2}$ moments for Gaussian random variables in order to obtain:
\begin{equation*}
\be\lc |u_{t,x} - u_{s,y}|^{p} \rc \le c_{p} \,  \lc \be\lc \lln
V_{t,x} - V_{s,y}  \rrn^{2} \rc \rc^{p/2}.
\end{equation*}
We now evaluate the right hand side of this inequality.

\smallskip

Let us start by studying a difference of the form $V_{t,x} -
V_{t,y}$, for $t\in(0,T]$ and $x,y\in\R$. The variance of
$V_{t,x}-V_{t,y}$ conditioned by $B$ can be computed as in
(\ref{var}) and we can write
\begin{eqnarray*}
&&\be \lc |V_{t,x}-V_{t,y}|^2\rc\\
&=&2\be_B \lc \int_0^t \int_0^t \gamma(r-s)\laa(B_r-B_s)dr
ds-\int_0^t \int_0^t
\gamma(r-s)\laa(B_r-B_s+x-y)drds\rc\\
&=&2\int_0^t \int_0^t  \int_{\R^d} \gamma(r-s)  \lp 1-\cos\langle
\xi,x-y\rangle\rp  e^{- \frac 12 |r-s||\xi|^2}  \mu(d\xi) drds.
\end{eqnarray*}
Using condition (\ref{t1}) and  the estimate $|1-\cos\langle \xi,
x-y\rangle|\leq |\xi|^{2\alpha}|x-y|^{2\alpha}$,  where $0< \al <
1-\beta$, yields
\[
\be \lc |V_{t,x}-V_{t,y}|^2\rc \leq  C |x-y|^{2\al}  \int_0^t
\int_0^t \int_{\R^d} |r-s|^{-\beta}e^{- \frac 12 |r-s||\xi|^2}  |\xi|^{2\al}
\mu(d\xi)drds\,.
\]
Finally, as in the proof of Proposition  \ref{propdelta},
Hypothesis \ref{hyp:mu-holder} implies
\[
\int_0^T \int_0^T \int_{\R^d} |r-s|^{-\beta}e^{-\frac 12 |r-s||\xi|^2}
|\xi|^{2\al}  \mu(d\xi)drds <\infty,
\]
and thus $\be [ |V_{t,x}-V_{t,y}|^2] \leq  C |x-y|^{2\al}$.

\smallskip
The evaluation of the variance of $V_{t,x} -V_{s,x}$, with $0\le
s<t\le T$, $x\in \R^d$ goes along the same lines. Indeed, we write 
$\be [ |V_{t,x}-V_{s,x}|^2]\le 2(A_1+A_2)$, with
\begin{eqnarray*}
A_{1}&=& \be \lc  \left |\int_s^t
\int_{\R^d}\delta_0 (B_{t-r}^x-y)W(dr,dy)\right |^2  \rc \\
A_{2}&=& 
\be  \lc \left |\int_0^s
\int_{\R^d}\left(\delta_0(B_{t-r}^x-y)-\delta_0(B_{s-r}^x-y)\right)W(dr,dy)\right|^2\rc.
\end{eqnarray*}
For the term $A_1$, computing the variance as in (\ref{var}) and
using condition (\ref{t1}), we obtain
\begin{eqnarray*}
A_1 
&=&\be_B  \lc
\int_0^{t-s}\int_0^{t-s}\gamma(u-v)\laa(B_{u}-B_{v}) \, dudv\rc\\
&\leq & C_{\beta} \int_0^{t-s}\int_0^{t-s}   \int_{\R^d}  |u-v|^{-\beta} e^{- \frac 12|u-v||\xi|^2} \, \mu(d\xi) du dv\\
&\leq& C (t-s) \int_0^{t-s}   \int_{\R^d}  u^{-\beta} e^{-\frac 12u
|\xi|^2} \,\mu(d\xi) du.
\end{eqnarray*}
Then,  Hypothesis \ref{hyp:mu-holder} allows us to write
\[
 \int_{\R^d}   e^{-\frac 12u |\xi|^2}\mu(d\xi)= C_1 +u^{\alpha+\beta -1}  \int_{|\xi| > 1}  |\xi|^{2( \al+\beta-1)}\mu(d\xi)
\]
for any $\alpha <1-\beta$, which leads to the bound $A_1 \le C (t-s)^{1+\al}$.

\smallskip

The term $A_2$ can be handled as follows:  as in \eref{var} we write:
\begin{multline}
A_2 =
\be_B   \Big[ \int_0^s \int_0^s
\gamma(u-v )
\big[\laa(B_{t-u}-B_{t-v }) + \laa(B_{s-u}-B_{s-v }) \\
- 2 \laa(B_{t-u}-B_{s-v }) \big]
\, du dv
\Big],
\end{multline}
and changing to Fourier coordinates, this yields:
\begin{equation}\label{e.A2}
A_2 \le
2 \int_0^s \int_0^s \gamma(u-v)\int_{\R^d}\left|e^{-\frac
12|u-v||\xi|^2}- e^{-\frac 12|t-s-u+v||\xi|^2}\right|\mu(d\xi)du
dv\,. 
\end{equation}
Using the estimate $|e^{-x}-e^{-y}|\leq (e^{-x}+e^{-y})|x-y|^{\al}$,
for any $0< \al <1-\beta$ and $x,y \geq 0$ and condition (\ref{t1}),
we obtain
\begin{eqnarray*}
A_2 \le C  |t-s|^{\al}\int_0^s \int_0^s   \int_{\R^d} |u-v|^{-\beta}
\left(e^{-\frac 12|u-v ||\xi|^2}+e^{-\frac
12|t-s-u+v||\xi|^2}\right)|\xi|^{2\al}\mu(d\xi) dudv\,.
\end{eqnarray*}
Then,  in order to achieve the bound $A_{2}\le |t-s|^{\al}$, it suffices to prove that
\begin{equation*}
\int_0^s \int_0^s |u-v |^{-\beta}\int_{\R^d} e^{-\frac 12|t-s-u+v
||\xi|^2}|\xi|^{2\al } \mu(d\xi)du dv
\end{equation*}
is  uniformly bounded for $0\le s< t \leq T$.  We decompose the
integral with respect to the measure $\mu$ into the regions $\{|\xi|
\le 1\}$ and $\{|\xi| >1\}$. The integral on  $\{|\xi| \le 1\}$ is
clearly bounded because $\mu$ is finite on compact sets. Taking into
account  of the hypothesis \ref{hyp:mu-holder}, the integral over
$\{ |\xi |>1\}$ can be handled using the estimate
\[
\sup_{s,t \in [0,T]} \int_0^s \int_0^s |u-v|^{-\beta}e^{-\frac 12
|t-s-u+v||\xi|^2} dudv \le C |\xi|^{2\beta-2}.
\]
 Putting together our bounds on $A_{1}$ and $A_{2}$, we have been able to prove that $\be [ |V_{t,x}-V_{s,x}|^2] \le |t-s|^{\al}$. Furthermore, gathering our estimates for $V_{t,x}-V_{t,y}$ and $V_{t,x}-V_{s,x}$,  this completes the proof of the     theorem.
\end{proof}

\begin{remark}

The results of Theorem \ref{thmHolder} do not give the optimal H\"older continuity exponents for the process $u$ defined by \eqref{FK}. Another strategy could be implemented, based on the Feynman-Kac representation for the $(2p)$-th moments of $u$. This method is longer than the one presented here, but should lead to some better estimates of the continuity exponents. We stick to the shorter version of Theorem \ref{thmHolder} for sake of conciseness, and also because optimal exponents will be deduced from the pathwise results of Section \ref{sec:pathwise-solution} (in particular Proposition~\ref{prop:identif-FK-pathwise}).

\end{remark}

\subsubsection{Time independent noise}
In the case of time independent noise, the next result provides a
result on the H\"older continuity of the  Feynman-Kac functional
defined in (\ref{FK2}).   In this case we impose the following
additional integrability condition on $\mu$.

\begin{hypothesis}\label{hyp:mu-holder2}
Let $W$ be a spatial Gaussian noise with covariance structure
encoded by  $\laa$. Suppose that   the spectral measure $\mu$
satisfies
\begin{equation*}
\int_{\R^{d}}\frac{\mu(d\xi)}{1+|\xi|^{2(1-\al)}} <\infty
\end{equation*}
for some $\al \in (0,1)$.
\end{hypothesis}

\begin{theorem}\label{thmHolder2}
Let $u$ be the  Feynman-Kac functional  defined in (\ref{FK2}) with $u_0\equiv 1$,
namely:
\begin{equation*}
u_{t,x} = \be_{B}\lc \exp\lp V_{t,x} \rp \rc, \quad\text{where}\quad
V_{t,x} = \int_{0}^{t}  \lp \int_{\R^d} \delta_0(B^x_{r}-y) W(dy)
\rp dr.
\end{equation*}
Then $u$ admits a version which is $(\ga_{1},\ga_{2})$-H\"older
continuous on any compact set of the form $[0,T]\times[-M,M]^{d}$,
with any $\ga_{1} < \frac{1+\al}{2}$, $\ga_{2}< \al$ and $T,M>0$.
\end{theorem}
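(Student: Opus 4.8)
The plan is to mirror the proof of Theorem \ref{thmHolder}, reducing everything to second-moment estimates on the conditionally Gaussian functional $V_{t,x}$ and then invoking Kolmogorov's criterion. By that criterion it suffices to show that for all large $p$ and all $s,t\in[0,T]$, $x,y\in\R^d$,
\begin{equation*}
\be\lc |u_{t,x}-u_{s,y}|^p\rc \le c_{p,T}\lp |t-s|^{\frac{(1+\al)p}{2}} + |x-y|^{\al p}\rp,
\end{equation*}
since letting $p\to\infty$ then yields H\"older regularity of any order $\ga_1<\frac{1+\al}{2}$ in time and $\ga_2<\al$ in space. To obtain this I would first reduce to an estimate on $V$ exactly as in \eqref{eq:reduc-to-bnd-V}: using $|e^x-e^y|\le(e^x+e^y)|x-y|$, the Cauchy-Schwarz inequality, the exponential integrability $\sup_{t\in[0,T],x\in\R^d}\be[\exp(\lambda V_{t,x})]<\infty$ (the time-independent analogue of Theorem \ref{thmexp}, obtained by the same Le Gall decomposition and which I would invoke rather than reprove), Minkowski's inequality, and the equivalence of $L^p$ and $L^2$ norms for conditionally Gaussian variables, one arrives at $\be[|u_{t,x}-u_{s,y}|^p]\le c_p(\be[|V_{t,x}-V_{s,y}|^2])^{p/2}$. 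It then remains to control the two pure increments via the triangle inequality in $L^2(\Omega)$.

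For the spatial increment I would compute the conditional variance of $V_{t,x}-V_{t,y}$ as in \eqref{var} and pass to Fourier variables, obtaining
\begin{equation*}
\be\lc |V_{t,x}-V_{t,y}|^2\rc = 2\iot\iot\int_{\R^d}\lp 1-\cos\lla\xi,x-y\rra\rp e^{-\frac12|r-s||\xi|^2}\mu(d\xi)\,drds.
\end{equation*}
Using the interpolation bound $1-\cos\lla\xi,x-y\rra\le C|\xi|^{2\al}|x-y|^{2\al}$ and the elementary time integration $\iot\iot e^{-\frac12|r-s||\xi|^2}drds\le C_T/(1+|\xi|^2)$, the matter reduces to the finiteness of $\int_{\R^d}\frac{|\xi|^{2\al}}{1+|\xi|^2}\mu(d\xi)$, which holds by Hypothesis \ref{hyp:mu-holder2} since $|\xi|^{2\al}/(1+|\xi|^2)\le C/(1+|\xi|^{2(1-\al)})$. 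This gives $\be[|V_{t,x}-V_{t,y}|^2]\le C|x-y|^{2\al}$.

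The temporal increment is where the time-independent case is genuinely simpler than Theorem \ref{thmHolder}, and where the improved exponent $\frac{1+\al}{2}$ originates. Because in \eqref{equ8} the Brownian position $B_r^x$ does not depend on the terminal time, the difference $V_{t,x}-V_{s,x}$ is merely the contribution of the strip $[s,t]$, so the delicate $A_2$-type comparison of the time-dependent proof disappears completely. I would write $V_{t,x}-V_{s,x}=\int_s^t(\int_{\R^d}\delta_0(B_r^x-y)W(dy))dr$, compute its conditional variance, and pass to Fourier; after the change of variables $u=r-s$, $v=r'-s$ this yields
\begin{equation*}
\be\lc |V_{t,x}-V_{s,x}|^2\rc = \int_0^{t-s}\int_0^{t-s}\int_{\R^d}e^{-\frac12|u-v||\xi|^2}\mu(d\xi)\,dudv.
\end{equation*}
Splitting the inner integral over $\{|\xi|\le1\}$ and $\{|\xi|>1\}$ and using $x^{1-\al}e^{-x}\le C$ together with Hypothesis \ref{hyp:mu-holder2} gives the bound $C_1+C_2|u-v|^{-(1-\al)}$; integrating the singular term over the square of side $h=|t-s|$ produces $\int_0^h\int_0^h|u-v|^{-(1-\al)}dudv\le Ch^{1+\al}$, while the bounded term contributes $C_1h^2\le C_1T^{1-\al}h^{1+\al}$. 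Hence $\be[|V_{t,x}-V_{s,x}|^2]\le C|t-s|^{1+\al}$.

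The only step requiring real care is the uniform-in-$(t,x)$ exponential integrability used in the reduction, which I would take from the time-independent version of Theorem \ref{thmexp}; the rest is a routine combination of Fourier analysis, the elementary bounds above, and Kolmogorov's criterion. Combining the spatial estimate $C|x-y|^{2\al}$ and the temporal estimate $C|t-s|^{1+\al}$ through $\be[|V_{t,x}-V_{s,y}|^2]\le 2(\be[|V_{t,x}-V_{t,y}|^2]+\be[|V_{t,y}-V_{s,y}|^2])$ then yields the displayed moment bound for $u$ and completes the argument.
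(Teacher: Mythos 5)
Your proposal is correct and follows exactly the route the paper intends: the paper omits the proof of Theorem \ref{thmHolder2}, stating only that it is similar to that of Theorem \ref{thmHolder}, and your argument is precisely that adaptation — the same reduction via exponential integrability, Cauchy--Schwarz and conditional Gaussianity to a bound on $\be[|V_{t,x}-V_{s,y}|^2]$, the same Fourier-mode estimates for the spatial increment, and the observation that the $A_2$-type term disappears so that the strip contribution alone yields $\be[|V_{t,x}-V_{s,x}|^2]\le C|t-s|^{1+\al}$ and hence the improved time exponent $\frac{1+\al}{2}$. All the individual estimates check out against Hypothesis \ref{hyp:mu-holder2}.
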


\begin{proof} The proof is similar to  the proof  
of Theorem \ref{thmHolder}  and we omit the details.
\end{proof}

\subsection{Examples}
Let us  discuss  the validity of Hypothesis \ref{hyp:mu-holder} and
Hypothesis \ref{hyp:mu-holder2} in the  examples  presented in the
introduction. In the case of a time dependent noise we assume that
the time covariance has the form  $\gamma(x)=|x|^{-\beta}$, $0<
\beta < 1$.

\smallskip
For the Riesz kernel $\laa(x)=|x|^{-\eta}$, where
$\mu(d\xi)=C_{\beta}|\xi|^{\eta-d}d\xi$,  we already know that
Hypothesis \ref{hyp:mu} holds if $\eta<2$.  On the other hand,
Hypothesis \ref{hyp:mu2}, which allows us to define the Feynman-Kac
functional in the time dependent case,  is satisfied if $\eta <
2-2\beta$.  For the H\"older continuity,  Hypothesis
\ref{hyp:mu-holder} holds for any   $\al \in (0,
1-\beta-\frac{\eta}{2} )$ and  Hypothesis \ref{hyp:mu-holder2} holds
for any  $\al' \in (0, 1-\frac{\eta}{2})$. Then, by Theorem
\ref{thmHolder} and \ref{thmHolder2}, for any $\al \in (0,
1-\beta-\frac{\eta}{2} )$, $\al' \in (0, 1-\frac{\eta}{2})$,  assuming $u_0\equiv 0$, the
Feynman-Kac functional (\ref{FK})  is H\"older continuous of order
$\al$ in the space variable and of order $\frac{\al}{2}$ in the time
variable, and  the Feynman-Kac functional  (\ref{FK2}) is H\"older
continuous of order $\al$ in the space variable and of order
$\frac{\al'+1}{2}$ in the time variable.

\smallskip

For the  Bessel kernel, we know that Hypothesis \ref{hyp:mu}  is
satisfied when  $\eta> d -2$, and Hypothesis \ref{hyp:mu-holder}
holds when $\eta >d+2\beta-2$.  By Theorem \ref{thmHolder} and
\ref{thmHolder2}, for any $\al \in (0,\min (\frac{\eta
-d}{2}-\beta+1, 1))$ and $\al' \in (0,
\min(\frac{\beta-d}{2}+1,1))$,  assuming $u_0\equiv 0$,  the   Feynman-Kac functional (\ref{FK})
is H\"older continuous of order $\al$ in the space variable and of
order $\frac{\al}{2}$ in the time variable, the Feynman-Kac functional  (\ref{FK2}) is H\"older continuous of order $\al'$ in the space
variable and of order $\frac{\al'+1}{2}$ in the time variable.

\smallskip
 Consider the case of a  fractional noise with
covariance function $\gamma(t)=H(2H-1)  |t|^{2H-2}$ and
$\laa(x)=\prod_{i=1}^d H_i(2H_i-2) |x_i|^{2H_i-2}$.  We know that
Hypothesis \ref{hyp:mu} holds when $\sum_{i=1}^d H_i > d-1$.
Moreover, when $\sum_{i=1}^d H_i > d-2H+1$, Hypothesis
\ref{hyp:mu-holder}  is satisfied. By Theorem \ref{thmHolder} and
\ref{thmHolder2}, for any $\al \in (0, \sum_{i=1}^d H_i+2H-d-1)$ and
$\al' \in (0, \sum_{i=1}^d H_i-d+1)$,  assuming $u_0 \equiv 0$, Feynman-Kac functional (\ref{FK})  is H\"older continuous of order $\al$ in the space
variable and of order $\frac{\al}{2}$ in the time variable, which
recovers the result in \cite{Song Jian}). On the other hand,  Feynman-Kac functional (\ref{FK2}) is H\"older continuous of order
$\al^{\prime}$ in the space variable and of order
$\frac{\al^{\prime}+1}{2}$ in the time variable.

\section{Equation in the Stratonovich sense}\label{sec:pathwise-solution}

 In    this section we consider the following  stochastic heat equation of
 Stratonovich type with the multiplicative Gaussian noise introduced in Section \ref{sec2.1}:
\begin{equation}\label{eqSt1}
\frac{\partial u}{\partial t}=\frac{1}{2}\Delta u + u\frac{\partial
^{d+1} W}{\partial t \partial x_1 \cdots \partial x_d}\,.
\end{equation}
As in the previous sections, the initial condition is a continuous and bounded function $u_0$.
 We will  discuss two notions of
solution.  The  first one is based on the Stratonovich integral,
which is controlled using techniques of Malliavin calculus and a
second one is completely pathwise  and is based on Besov spaces. We
will show that the   Feynman-Kac functional (\ref{FK})  is a
solution in both senses,  and in the pathwise formulation it is the    unique solution to equation \eqref{eqSt1}.

We will also discuss the case of  a time independent multiplicative Gaussian
noise introduced in Section \ref{sec2.2}, that is
\begin{equation}\label{eqSt2}
\frac{\partial u}{\partial t}=\frac{1}{2}\Delta u+u
\frac{\partial^{d}W}{\partial x_1 \cdots \partial
x_d}\,,
\end{equation}
with an initial condition  $u_0\in C_b(\mathbb{R}^d)$. As in the case of a time dependent noise,   we will show that the Feynman-Kac functional (\ref{FK2}) is both a mild Stratonovich solution and a pathwise solution.

\subsection{Stratonovich solution}\label{sec:strato-solution}

Our aim  is to define a notion of solution
 to equation~(\ref{eqSt1}) by means of a Russo-Vallois type approach, which happens to be compatible with Malliavin calculus tools. As usual, we divide our study into time dependent and time independent cases.
 
\subsubsection{Time dependent case}
 Let $W$ be the  time dependent noise  introduced in Section \ref{sec2.1}. In this case, we make use of  the following definition of  Stratonovich integral.
\begin{definition}\label{defSt}
Given a random field $v=\{v_{t,x}; t\geq 0, x\in \R^d\}$ such that
\begin{equation*}
\int_0^T \int_{\R^d}|v_{t,x}| \, dxdt < \infty
\end{equation*}
almost surely for all $T> 0$, the Stratonovich integral $\int_0^T
\int_{\R^d} v_{t,x}W(dt,dx)$ is defined as the following limit in
probability, if it exists:
\begin{equation*}
\lim_{\ep \downarrow 0} \lim_{ \delta \downarrow 0} \int_0^T \int_{\R^d} v_{t,x}
\dot{W}^{\varepsilon,\delta}_{t,x}dx dt\,,
\end{equation*}
where $\dot{W}^{\varepsilon,\delta}_{t,x}$ is the regularization of $W$
defined in (\ref{regW}).
\end{definition}
 With this definition of integral, we have the following notion of
solution for equation (\ref{eqSt1}).

\begin{definition}\label{def4}
A random field $u=\{u_{t,x}; t\geq 0, x \in \R^d\}$ is a mild
solution of equation (\ref{eqSt1})  with initial condition $u_0\in
C_b (\R^d)$ if for any $t\ge0$ and $x\in \R^d$ the following
equation holds
\begin{equation}\label{eq:mild-russo-vallois}
u_{t,x}=p_t u_0(x)+\int_0^t\int_{\mathbb{R}^d}p_{t-s}(x-y)u_{s,y} \,  W(ds,dy)  ,
\end{equation}
 where the last term is a
Stratonovich stochastic integral in the sense of Definition \ref{defSt}.
\end{definition}

 The next result asserts the existence of a solution to equation \eqref{eq:mild-russo-vallois} based on the Feynman-Kac representation. 

\begin{theorem}\label{thmFK1}
Assume Hypothesis \ref{hyp:mu2} holds true.  Then,
the process $u$ defined in (\ref{FK})
is a mild solution of equation (\ref{eqSt1}), in the sense given by Definition \ref{def4}.
\end{theorem}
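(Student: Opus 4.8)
The plan is to recover the mild Stratonovich equation of Definition~\ref{def4} by a double regularization of the noise, solving the regularized equation with a classical Feynman--Kac argument and then passing to the limit along the very approximation that defines the Stratonovich integral in Definition~\ref{defSt}. To begin, I would fix $\ep,\delta>0$ and note that for each fixed $\om$ the regularized noise $\dot W^{\ep,\delta}$ of \eqref{regW} is a continuous potential in $(t,x)$. The bridge to \eqref{FKaprox} is the stochastic Fubini identity $\iot \dot W^{\ep,\delta}_{s,B^x_{t-s}}\,ds = W(A^{\ep,\delta}_{t,x})=V^{\ep,\delta}_{t,x}$, which follows directly from the definition \eqref{m3} of $A^{\ep,\delta}$. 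Consequently $u^{\ep,\delta}$ in \eqref{FKaprox} is exactly the classical Feynman--Kac representation of the solution of the heat equation with smooth (ordinary-product) potential $\dot W^{\ep,\delta}$, and a standard Duhamel verification yields the regularized mild equation
\begin{equation*}
u^{\ep,\delta}_{t,x}=p_t u_0(x)+\iot\int_{\R^d}p_{t-s}(x-y)\,u^{\ep,\delta}_{s,y}\,\dot W^{\ep,\delta}_{s,y}\,dy\,ds .
\end{equation*}

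Next I would let $\delta\downarrow 0$ and then $\ep\downarrow 0$. On the left-hand side, Proposition~\ref{proplim} gives $u^{\ep,\delta}_{t,x}\to u_{t,x}$ in $L^p$, hence in probability. For the right-hand side I would split the stochastic convolution as
\begin{equation*}
\iot\int_{\R^d}p_{t-s}(x-y)\,u^{\ep,\delta}_{s,y}\,\dot W^{\ep,\delta}_{s,y}\,dy\,ds
=
\iot\int_{\R^d}p_{t-s}(x-y)\,u_{s,y}\,\dot W^{\ep,\delta}_{s,y}\,dy\,ds + R^{\ep,\delta},
\end{equation*}
where $R^{\ep,\delta}=\iot\int_{\R^d}p_{t-s}(x-y)(u^{\ep,\delta}_{s,y}-u_{s,y})\dot W^{\ep,\delta}_{s,y}\,dy\,ds$. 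By Definition~\ref{defSt} the first term converges, as $\delta\downarrow0$ then $\ep\downarrow0$, to the Stratonovich integral $\iot\int_{\R^d}p_{t-s}(x-y)u_{s,y}W(ds,dy)$ precisely when that iterated limit exists. Granting $R^{\ep,\delta}\to 0$, the identity $u^{\ep,\delta}_{t,x}-p_tu_0(x)=\iot\int p_{t-s}(x-y)u^{\ep,\delta}_{s,y}\dot W^{\ep,\delta}_{s,y}dyds$ together with the convergence of the left-hand side forces the first term to converge to $u_{t,x}-p_tu_0(x)$; this simultaneously proves existence of the Stratonovich integral and the mild equation \eqref{eq:mild-russo-vallois}.

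The crux, and the main obstacle, is to establish $R^{\ep,\delta}\to 0$. A naive Cauchy--Schwarz bound fails, because the variance of $\dot W^{\ep,\delta}_{s,y}$ blows up like $\delta^{-\beta}\int_{\R^d}e^{-\ep^2|\xi|^2}\mu(d\xi)$ as $\delta,\ep\to0$, while Proposition~\ref{proplim} provides no rate on $u^{\ep,\delta}-u$. To extract the needed cancellation I would invoke the divergence formula \eqref{eq2}: writing $\dot W^{\ep,\delta}_{s,y}=W(g^{\ep,\delta}_{s,y})$ with $g^{\ep,\delta}_{s,y}(\tau,z)=\varphi_\delta(s-\tau)p_\ep(y-z)$ as in \eqref{regW}, it decomposes $R^{\ep,\delta}$ into a Skorohod integral $\delta\big(\iot\int p_{t-\cdot}(x-\cdot)(u^{\ep,\delta}-u)g^{\ep,\delta}\big)$ plus a trace term $\iot\int_{\R^d}p_{t-s}(x-y)\,\lla D(u^{\ep,\delta}_{s,y}-u_{s,y}),g^{\ep,\delta}_{s,y}\rra_{\ch}\,dy\,ds$. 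Conditionally on the Brownian motion $B$ one has $Du^{\ep,\delta}_{s,y}=\be_{B}[u_0(B^y_s)\exp(V^{\ep,\delta}_{s,y})A^{\ep,\delta}_{s,y}]$ and the analogous expression for $Du_{s,y}$, so the trace term is controlled by the pairings $\lla A^{\ep,\delta}_{s,y},g^{\ep,\delta}_{s,y}\rra_{\ch}$ and $\lla A_{s,y},g^{\ep,\delta}_{s,y}\rra_{\ch}$.

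The key observation is that, since $g^{\ep,\delta}_{s,y}$ concentrates at the endpoint $(s,y)$, both pairings converge to the \emph{same} half-diagonal trace $\int_0^s \ga(\tau-s)\laa(B^y_{s-\tau}-y)\,d\tau$: the potentially divergent self-interaction coming from the region where both time variables lie within $\delta$ of $s$ is of order $\delta^{-1}\!\int_{s-\delta}^{s}\!\int_{s-\delta}^{s}\ga(\tau_1-\tau_2)\,d\tau_1 d\tau_2\sim \delta^{1-\beta}\to0$, which vanishes precisely because $\beta<1$ under Hypothesis~\ref{hyp:mu2}. Hence the divergent trace contributions cancel in the difference $u^{\ep,\delta}-u$. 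Combining this cancellation with the $L^2$-convergence $V^{\ep,\delta}_{s,y}\to V_{s,y}$ from Proposition~\ref{propdelta} and Remark~\ref{rem4.3}, the uniform exponential integrability of Theorem~\ref{thmexp}, and the standard bound $\be[\delta(\Phi)^2]\le \be\|\Phi\|^2_{\ch}+\be\|D\Phi\|^2_{\ch\otimes\ch}$ for the Skorohod component (whose kernel $\iot\int p_{t-\cdot}(x-\cdot)(u^{\ep,\delta}-u)g^{\ep,\delta}$ tends to $0$ in $\ch$), yields $R^{\ep,\delta}\to 0$ in $L^1(\Omega)$ and completes the proof. The time-independent case is identical, using \eqref{FK2} and Hypothesis~\ref{hyp:mu} in place of Hypothesis~\ref{hyp:mu2}.
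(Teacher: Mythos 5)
Your proposal is correct and follows essentially the same route as the paper: regularize the noise, identify $u^{\varepsilon,\delta}$ of (\ref{FKaprox}) with the classical Feynman--Kac solution of the regularized equation, invoke Proposition \ref{proplim} for the left-hand side, and reduce everything to showing that the remainder $R^{\varepsilon,\delta}$ (the paper's $G^{\varepsilon,\delta}$) vanishes via Malliavin integration by parts. The only organizational difference is that the paper applies the second-order duality (\ref{k1}) directly to $\be[(G^{\varepsilon,\delta})^{2}]$ rather than the first-order divergence formula (\ref{eq2}) to the remainder itself, and it does not rely on any cancellation between the two pairings but only on their uniform $L^{2}$/$L^{4}$ boundedness (via the estimate (\ref{j3}) and Fourier arguments) combined with $\sup_{s,y}\be[|u^{\varepsilon,\delta}_{s,y}-u_{s,y}|^{2}]\to 0$ --- the same estimates your sketch would ultimately require.
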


\begin{proof}
We proceed similarly to Section \ref{sec:FK-moments}. Consider the following approximation to equation~(\ref{eqSt1})
\begin{equation}\label{approxSt}
\frac{\partial u^{\varepsilon,\delta}}{\partial t}=\frac{1}{2}\Delta
u^{\varepsilon,\delta}+u^{\varepsilon,\delta}\dot{W}^{\varepsilon,\delta}_{t,x},
\end{equation}
with initial condition $u_0$, where
$\dot {W}_{t,x}^{\varepsilon,\delta}$ is defined in (\ref{regW}).
From the classical Feynman-Kac formula, we know that
 \begin{equation}\label{eq:FK-approx-u}
u^{\varepsilon,\delta}_{t,x}= \be_B  \lc u_0(B^x_t)  \exp \left(\int_0^t
\dot{W}^{\varepsilon,\delta}(t-s,B_{s}^x)ds\right)\rc\,.
\end{equation}
 Moreover, thanks to Fubini's theorem, we can write
\begin{eqnarray*}
\int_0^t \dot{W}^{\varepsilon,\delta}(t-s,B_s^x)ds &=& \frac 1\delta
\int_0^t \left(\int_{(t-s-\delta)_+}^{t-s} \int_{\R^d}
p_{\varepsilon}(B_s^x-y)W(dr,dy)\right)ds\\
&=& W(A^{\ep.\delta}_{t,x}) =V_{t,x}^{\varepsilon,\delta}\,,
\end{eqnarray*}
where $A_{t,x}^{\varepsilon,\delta}$ is defined in
 (\ref{m3}) and $V_{t,x}^{\varepsilon,\delta}$ is defined in (\ref{rem5}).  Therefore, the process
$u^{\varepsilon,\delta}_{t,x}$ is given by (\ref{FKaprox}), and Proposition \ref{proplim} implies that
(\ref{m9}) holds.

\smallskip

Next we prove that $u$ is a mild solution of equation (\ref{eqSt1})
in the sense of Definition \ref{def4}.  Taking into account of the
definition of the Stratonovich integral, is suffices to show that  
\[
G^{\varepsilon,\delta}: =\int_0^t
\int_{\R^d} p_{t-s}(x-y) \lp u^{\varepsilon,\delta}_{s,y}-u_{s,y}\rp \dot{W}^{\varepsilon,\delta}_{s,y} \,
dyds
\end{equation*}
converges in $L^2(\Omega)$ to zero when first $\delta$ tends to zero and later $\ep$ tends to zero. To this aim, we are going to use  the following notation:
\[
\psi^{\ep,\delta}_{s,y} (r,z)= \frac 1\delta\mathbf{1}_{[(s-\delta)_+,s]} (r) p_\ep(y-z),
\quad\text{and}\quad
\widetilde{u}^{\ep,\delta}_{s,y}=  u^{\varepsilon,\delta}_{s,y}-u_{s,y}.
\]
In particular, notice that $ \dot{W}^{\varepsilon,\delta}_{s,y}=W\lp \psi^{\ep,\delta}_{s,y} \rp$.   Then,
\[
\be\lc   \lp G^{\varepsilon,\delta} \rp^2 \rc =   \iot \iot \int_{\R^{2d}} p_{t-s}(x-y)p_{t-r}(x-z) \be \lc  \widetilde{u}^{\ep,\delta}_{s,y}\widetilde{u}^{\ep,\delta}_{r,z}
W\lp \psi^{\ep,\delta}_{s,y} \rp W\lp \psi^{\ep,\delta}_{r,z}  \rp  \rc  dydz dsdr,
\]
and the expected value above can be analyzed by integration by parts.   Indeed, according to relation \eqref{eq:FK-approx-u}, it is readily checked that $\widetilde{u}^{\ep,\delta}_{s,y}\widetilde{u}^{\ep,\delta}_{r,z} =\be_{B, \widetilde {B}}[Z^{\ep,\delta}_{s,y,r,z}]$, with
\[
Z^{\ep,\delta}_{s,y,r,z}= u_0(B^y_s)  \lc\exp\lp
V^{\ep,\delta,B}_{s,y} \rp - \exp\lp V_{s,y}^B \rp \rc
u_0(\widetilde{B}^z_r)  
\lc\exp\lp
V^{\ep,\delta,\widetilde{B}}_{r,z} \rp - \exp\lp
V_{r,z}^{\widetilde{B}} \rp \rc,
\]
and where $B,\widetilde{B}$ designate two independent $d$-dimensional Brownian motions.
Moreover, a straightforward application of Fubini's theorem yields:
\begin{equation*}
\be \lc  \widetilde{u}^{\ep,\delta}_{s,y}\widetilde{u}^{\ep,\delta}_{r,z} \,
W\lp \psi^{\ep,\delta}_{s,y} \rp W\lp \psi^{\ep,\delta}_{r,z}  \rp  \rc
=
\be_{B, \widetilde {B}}\lcl  \be_{W}\lc Z^{\ep,\delta}_{s,y,r,z} \,  
W\lp \psi^{\ep,\delta}_{s,y} \rp W\lp \psi^{\ep,\delta}_{r,z}  \rp
\rc \rcl.
\end{equation*}
We can now invoke formula  (\ref{k1})  plus some easy computations of Malliavin derivatives in order to get:
\begin{equation}\label{eq:dcp-G-ep-delta}
\be\lc   \lp G^{\varepsilon,\delta} \rp^2 \rc = A_1+A_2,
\end{equation}
where
\[
A_1=\iot \iot \int_{\R^{2d}} p_{t-s}(x-y)p_{t-r}(x-z) \be \lc  \widetilde{u}^{\ep,\delta}_{s,y}\widetilde{u}^{\ep,\delta}_{r,z}  \rc
\langle \psi^{\ep,\delta}_{s,y} , \psi^{\ep,\delta}_{r,z}  \rangle_{\mathcal{H}}    dydz dsdr
\]
and
\[
A_2=\iot \iot \int_{\R^{2d}} p_{t-s}(x-y)p_{t-r}(x-z)\be \lc Z^{\ep,\delta}_{s,y,r,z}
 \Gamma^{\ep,\delta}_{s,y,r,z}  \rc dydz dsdr,
\]
with the notation
\begin{eqnarray*}
\Gamma^{\ep,\delta}_{s,y,r,z} &=& \langle \psi^{\ep,\delta}_{s,y} , A^{\ep,\delta,\widetilde{B}}_{r,z}- \delta_0(\widetilde{B}^z_{r-\cdot}-\cdot)  \rangle_{\mathcal{H}}
 \langle \psi^{\ep,\delta}_{r,z} , A^{\ep,\delta,B}_{s,y}- \delta_0(B^y_{s-\cdot}-\cdot)  \rangle_{\mathcal{H}} \\
 &&+  \langle \psi^{\ep,\delta}_{s,y} , A^{\ep,\delta,B}_{s,y}- \delta_0(B^y_{s-\cdot}-\cdot)  \rangle_{\mathcal{H}}
 \langle \psi^{\ep,\delta}_{r,z} , A^{\ep,\delta,\widetilde{B}}_{r,z}- \delta_0(\widetilde{B}^r_{z-\cdot}-\cdot)  \rangle_{\mathcal{H}} \\
&&+  \langle \psi^{\ep,\delta}_{s,y} , A^{\ep,\delta,B}_{s,y}- \delta_0(B^s_{y-\cdot}-\cdot)  \rangle_{\mathcal{H}}
 \langle \psi^{\ep,\delta}_{r,z} , A^{\ep,\delta,B}_{s,y}- \delta_0(B^y_{s-\cdot}-\cdot)  \rangle_{\mathcal{H}}\\
 &&+  \langle \psi^{\ep,\delta}_{s,y} , A^{\ep,\delta,\widetilde{B}}_{r,z}- \delta_0(\widetilde{B}^r_{z-\cdot}-\cdot)  \rangle_{\mathcal{H}}
 \langle \psi^{\ep,\delta}_{r,z} , A^{\ep,\delta,\widetilde{B}}_{r,z}- \delta_0(\widetilde{B}^r_{z-\cdot}-\cdot)  \rangle_{\mathcal{H}}
 \end{eqnarray*}

According to Proposition \ref{proplim}, we know that
\[
\lim_{\ep \downarrow 0} \lim_{\delta \downarrow 0} \sup_{s\in[0,T], y\in\R^d}
   \be \lc |\widetilde{u}^{\ep,\delta}_{s,y}|^2 \rc =0,
   \]
   and with the same arguments as in Proposition \ref{proplim} we can also show that
\[
\lim_{\ep \downarrow 0} \lim_{\delta \downarrow 0} \sup_{s,r\in[0,T], y,z\in\R^d}
   \be \lc |Z ^{\ep,\delta}_{s,yr,z}|^2 \rc =0.
   \]
   Therefore,  with formula \eqref{eq:dcp-G-ep-delta} in mind, the convergence to zero of $B^{\ep,\delta}$ will follow, provided we show the following quantities are uniformly bounded in $\ep \in (0,1)$ and $\delta \in (0,1)$
   \begin{equation} \label{j1}
\theta_1:=   \iot \iot \int_{\R^{2d}} p_{t-s}(x-y)p_{t-r}(x-z)
 \left|  \langle \psi^{\ep,\delta}_{s,y} , \psi^{\ep,\delta}_{r,z}  \rangle_{\mathcal{H}} \right|   dydz dsdr
   \end{equation}
   and
      \begin{equation} \label{j2}
 \theta_2:=  \iot \iot \int_{\R^{2d}} p_{t-s}(x-y)p_{t-r}(x-z)
 \left\|    \Gamma^{\ep,\delta}_{s,y,r,z}   \right\|_2  dydz dsdr  ,
   \end{equation}
    where $\| \Gamma^{\ep,\delta}_{s,y,r,z} \|_2 $ stands for the norm of $\Gamma^{\ep,\delta}_{s,y,r,z}$ in $L^{2}(\Omega)$. The remainder of the proof is thus just reduced to an estimation of \eqref{j1} and \eqref{j2}. 
   
   \smallskip
     In order to bound $\theta_{1}$, we apply  the estimate  (\ref{j3}) and  the semigroup property of the heat kernel, which yields
   \begin{align*}
    \langle \psi^{\ep,\delta}_{s,y} , \psi^{\ep,\delta}_{r,z}  \rangle_{\mathcal{H}}
   & =    \lp  \frac 1{\delta^2} \int^s_{(s- \delta)_+} \int^r _{(r- \delta)_+} \gamma(u-v)  dudv \rp 
    \int_{\R^{2d}} p_{\ep} (y-z_1) p_\ep(z-z_2) \laa(z_1-z_2) dz_1 dz_2\\
   &\le c_{T,\beta} |r-s|^{-\beta}   \int_{\R^{d}} p_{2\ep} (y-z-w)\laa(w) dw.
    \end{align*}
   Substituting  this estimate into (\ref{j1}), we obtain
   \[
   \theta_1 \le  c_{T,\beta} \iot \iot \int_{\R^{d}} p_{2t-s-r+2\ep }(w) \laa (w) |r-s|^{-\beta} dw
   \le   c'_{T,\beta} \int_0^{2t}  \int_{\R^{d}} p_{2t-s }(w) \laa (w)dw ds   <\infty,
   \]
where we get rid of $\ep$ in Fourier mode, similarly to the proof of \eqref{expint}.
   
   \smallskip
   
   We now turn to the control of  $\theta_2$: we first write,  using the estimate  (\ref{j3}) and  the semigroup property of the heat kernel,
       \begin{eqnarray*}
    \langle \psi^{\ep,\delta}_{s,y} , A^{\ep,\delta}_{r,z} \rangle{_\mathcal{H}} &=&
  \frac 1{\delta^2}  \int_{(s-\delta)_+}^s   \int_{( r-\sigma-\delta)_+} ^{r-\sigma}  \int_0^r   \int_{\R^{2d}} p_\ep(y-z_1)  p_{\ep} (B^z_{\sigma} -z_2)  \gamma(u-v) \\
  &&\hspace{3in}\times \laa(z_1-z_2) dz_1dz_2   d\sigma dv du \\
  &\le&c_{T,\beta}    \int_0^r \int_{\R^d} p_{2\ep} (y-B^z_{r-\sigma}-w) \laa(w)  |s-\sigma|^{-\beta}dw d\sigma
       \end{eqnarray*}
       Invoking again arguments of Fourier analysis, analogous to those in the proof of  (\ref{expint}), we can show that
       \[
       \be \lc   \left| \int_0^r \int_{\R^d} p_{2\ep} (y-B^z_{r-\sigma}-w) \laa(w) |s-\sigma|^{-\beta}dw d\sigma \right|^4 \rc
       \le  \be \lc   \left| \int_0^r   \laa (B_{r-\sigma}) |s-\sigma|^{-\beta}d\sigma \right|^4 \rc ,
       \]
    and
    \[
    \sup_{r,s\in [0,T]}  \be \lc   \left| \int_0^r   \laa (B_{r-\sigma}) |s-\sigma|^{-\beta}d\sigma \right|^4 \rc  <\infty.
    \]
    This implies that  $ \left\|    \Gamma^{\ep,\delta}_{s,y,r,z}   \right\|_2  $, and thus, $\theta_2$, are uniformly bounded.
    The proof of the theorem is complete.
    \end{proof}

\begin{remark}
Consider the case where the space dimension is $1$, $\laa(x)$ is the Dirac delta function $\delta_0(x)$  corresponding to the  white noise, which in our setting means that condition~\eqref{t2} is satisfied with $0 < \beta < \frac{1}{2}$. Then our theorems of Section \ref{sec:eq-strato} cover assumption \eqref{t1}, with  $0 < \beta < \frac{1}{2}$ too,  if we interpret the composition $\laa(B_r-B_s)$ as a generalized Wiener functional. 
  Notice that in the case of the fractional Brownian motion with Hurst parameter $H$ (that is $\gamma(x) =c_{H} |x|^{2H-2}$) the condition  $0 < \beta < \frac{1}{2}$ means that
$H >\frac 34$. In this case it is already known that the process defined by (\ref{FK}) is still a solution to equation (\ref{eqSt1}) (see \cite{Song Jian}).
\end{remark}

\subsubsection{Time independent case}
Let $W$ be the time independent noise introduced in Section~\ref{sec2.2}. We claim that as in the time independent case, the Feynman-Kac functional given by~(\ref{FK2}) is a 
  mild solution to equation (\ref{eqSt2}) in the Stratonovich  sense.
  
  \smallskip
  
The Stratonovich integral with respect to the noise $W$ is defined as the limit of the integrals with respect to  regularization of the noise.
\begin{definition}\label{def5}
Given a random field $v=\{v_{x}; x\in \R^d\}$ such that
$ \int_{\R^d}|v_{x}|dx < \infty$
 almost surely, the Stratonovich integral $
\int_{\R^d} v_{x}W(dx)$ is defined as the following limit in
probability, if it exists:
\begin{equation*}
\lim_{\varepsilon\downarrow 0} \int_{\R^d} v_{x}
\dot{W}^{\varepsilon}_{x}dx \,,
\end{equation*}
where $\dot{W}^{\varepsilon}_{x}=  \int_{\mathbb{R}^d}  p_\varepsilon(x-y) W(dy) $.
\end{definition}
 With this definition of integral, we have the following notion of
solution for equation (\ref{eqSt2}).

\begin{definition}\label{def6}
A random field $u=\{u_{t,x}; t\geq 0, x \in \R^d\}$ is a mild
solution of equation (\ref{eqSt2}) if  we
have
\begin{eqnarray*}
 u_{t,x}= p_t u_0(x)+\ \int_0^t  \left( \int_{\R^d} p_{t-s}(x-y) u_{s,y}W(dy) \right)ds
\end{eqnarray*}
almost surely for all $t \geq 0$, where the last term is a
Stratonovich stochastic integral in the sense of Definition \ref{def5}.
\end{definition}

The next result is the existence of a solution based on the Feynman-Kac representation.

\begin{theorem}\label{thmFK2n}
Suppose that $\mu$ satisfies (\ref{mu1}).  Then,
the process $u_{t,x}$ given by  (\ref{FK2})
is a mild solution of equation (\ref{eqSt2}).
\end{theorem}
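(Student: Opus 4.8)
The plan is to follow the proof of Theorem \ref{thmFK1} line by line, the essential simplification being that the time-independent noise involves a single regularization parameter $\ep$ and no singular time kernel $\gamma$. First I would introduce the regularized equation
\[
\frac{\partial u^{\ep}}{\partial t}=\frac12\Delta u^{\ep}+u^{\ep}\dot W^{\ep}_{x},
\qquad
\dot W^{\ep}_{x}=\int_{\R^d}p_{\ep}(x-y)W(dy),
\]
for which the classical Feynman-Kac formula yields
\[
u^{\ep}_{t,x}=\be_B\lc u_0(B^x_t)\exp\lp V^{\ep}_{t,x}\rp\rc,
\qquad
V^{\ep}_{t,x}=\int_0^t\int_{\R^d}p_{\ep}(B^x_r-y)W(dy)dr=W(A^{\ep}_{t,x}),
\]
with $A^{\ep}_{t,x}(y)=\int_0^t p_{\ep}(B^x_r-y)dr$. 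Using the $L^2$-convergence of $V^{\ep}_{t,x}$ to $V_{t,x}$ and the uniform exponential integrability recorded at the end of Section \ref{sec:eq-strato} for the time-independent case (both consequences of condition \eqref{mu1}), the analogue of Proposition \ref{proplim} holds, so that $u^{\ep}_{t,x}\to u_{t,x}$ in $L^p$, uniformly for $(t,x)$ in compacts.

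Since $u^{\ep}$ satisfies the mild equation $u^{\ep}_{t,x}=p_tu_0(x)+\int_0^t\int_{\R^d}p_{t-s}(x-y)u^{\ep}_{s,y}\dot W^{\ep}_y\,dyds$, verifying Definition \ref{def6} reduces to showing that
\[
G^{\ep}:=\int_0^t\int_{\R^d}p_{t-s}(x-y)\lp u^{\ep}_{s,y}-u_{s,y}\rp\dot W^{\ep}_y\,dyds
\]
tends to zero in $L^2(\Omega)$ as $\ep\downarrow0$. Writing $\dot W^{\ep}_y=W(\psi^{\ep}_y)$ with $\psi^{\ep}_y=p_{\ep}(y-\cdot)$ and $\widetilde u^{\ep}_{s,y}=u^{\ep}_{s,y}-u_{s,y}$, I would expand $\be[(G^{\ep})^2]$ and apply the duality identity \eqref{k1} with $F=\widetilde u^{\ep}_{s,y}\widetilde u^{\ep}_{r,z}$, $h=\psi^{\ep}_y$, $g=\psi^{\ep}_z$, which splits $\be[(G^{\ep})^2]=A_1+A_2$ exactly as in \eqref{eq:dcp-G-ep-delta}: a trace term $A_1$ carrying the factor $\langle\psi^{\ep}_y,\psi^{\ep}_z\rangle_{\ch}$, and a term $A_2$ carrying the second Malliavin derivative of $\widetilde u^{\ep}_{s,y}\widetilde u^{\ep}_{r,z}$. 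Because $\be[|\widetilde u^{\ep}_{s,y}|^2]\to0$ and the analogous $Z^{\ep}$ quantity tends to zero uniformly, by Cauchy-Schwarz it suffices to bound the two integrals $\theta_1$ and $\theta_2$ (the time-independent analogues of \eqref{j1}--\eqref{j2}) uniformly in $\ep\in(0,1)$.

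The bound on $\theta_1$ is short: by the semigroup property $\langle\psi^{\ep}_y,\psi^{\ep}_z\rangle_{\ch}=\int_{\R^d}p_{2\ep}(y-z-w)\laa(w)dw$, and integrating the two heat kernels in $y,z$ gives
\[
\theta_1=\int_0^t\int_0^t\int_{\R^d}p_{2t-s-r+2\ep}(w)\laa(w)\,dw\,dsdr,
\]
which in Fourier variables is bounded by a constant times $\int_0^t\int_0^t\int_{\R^d}e^{-\frac12(2t-s-r)|\xi|^2}\mu(d\xi)dsdr$ and is hence finite uniformly in $\ep$ by condition \eqref{mu1}, exactly as in the control of $\theta_1$ in Theorem \ref{thmFK1} but without the weight $|r-s|^{-\beta}$. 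For $\theta_2$ one expands $\Gamma^{\ep}_{s,y,r,z}$ into products of inner products of the form $\langle\psi^{\ep}_y,\,A^{\ep}_{r,z}-A_{r,z}\rangle_{\ch}$, and, after using Fourier analysis to remove $\ep$ and the semigroup property (as in the proof of \eqref{expint}), reduces the estimate to the uniform bound $\sup_{r\in[0,T]}\be_B[|\int_0^r\laa(B_\sigma)d\sigma|^4]<\infty$, again a consequence of \eqref{mu1} and of the exponential integrability of Section \ref{sec:eq-strato}. Gathering these bounds yields $G^{\ep}\to0$ in $L^2$ and hence the claim.

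The step I expect to require the most care is the estimation of $\theta_2$, since it involves the second-derivative term and the bookkeeping of the four products in $\Gamma^{\ep}$; however, it is genuinely easier than its time-dependent counterpart in Theorem \ref{thmFK1}, because the absence of $\gamma$ removes the singular weights $|s-\sigma|^{-\beta}$ and the auxiliary estimate \eqref{j3}, so that all moment bounds follow directly from \eqref{mu1} and the Brownian functional integrability already established. Given the near-perfect parallel with Theorem \ref{thmFK1}, the remaining details can be safely omitted.
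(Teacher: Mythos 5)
Your proposal is correct and follows exactly the route the paper intends: the paper omits this proof, stating only that it is similar to that of Theorem \ref{thmFK1}, and your argument is precisely that adaptation (single spatial regularization parameter, classical Feynman--Kac formula for $u^{\ep}$, the analogue of Proposition \ref{proplim}, and the $A_1+A_2$ decomposition of $\be[(G^{\ep})^2]$ via \eqref{k1} with uniform bounds on $\theta_1,\theta_2$). The simplifications you identify --- no $\delta$-regularization, no singular weight $|s-\sigma|^{-\beta}$, no need for \eqref{j3} --- are exactly the ones that make the time-independent case easier, so nothing further is needed.
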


The proof of this theorem is similar to that of Theorem \ref{thmFK1}, and it is omitted.

\subsection{Existence and uniqueness of a pathwise solution}
In this section we define and solve equations \eqref{eqSt1} and  \eqref{eqSt2}   in a pathwise manner in $\R^{d}$, when the noise $W$ satisfies  some additional hypotheses.  Contrarily to the Stratonovich technology invoked at Section \ref{sec:strato-solution}, the pathwise method yields uniqueness theorems, which will be used in order to identify Feynman-Kac and pathwise solutions. At a technical level, our  results will be achieved in the framework of weighted Besov spaces, that  we proceed to recall now.

\subsubsection{Besov spaces}\label{sec:besov-spaces}

The definition of Besov spaces is based on Littlewood-Paley theory, which relies on decompositions of functions into spectrally localized blocks. We thus first introduce the following basic definitions.
\begin{definition}
We call \textit{annulus} any set of the form $C= \{ x \in
\mathbb{R}^d : a \leqslant | x | \leqslant b \}$ for some $0 < a < b$. A
{\textit{ball}} is a set of the form $B= \{ x \in \mathbb{R}^d : | x
| \leqslant b \}$.
\end{definition}

\smallskip
The localizing functions for the Fourier domain alluded to above are defined as follows.

\begin{notation}\label{not:partition-unity} In the remaining part of this section,  we
shall use    $\chi , \vp  $ to denote two smooth nonnegative radial
functions with compact support such that:
\begin{enumerate}
  \item The support of $\chi$ is contained in a ball and the support of $\vp$
  is contained in an annulus $C$ with $a=3/4$ and $b=8/3$;

  \item We have $\chi ( \xi) + \sum_{j \geqslant 0} \vp ( 2^{- j} \xi) = 1$ for all
  $\xi \in \mathbb{R}^d$;

  \item It holds that $\text{supp} ( \chi) \cap \text{supp} ( \vp ( 2^{- i} \cdot)) =
  \varnothing$ for $i \geqslant 1$ and if $| i - j | > 1$, then $\text{supp} (
  \vp ( 2^{- i} \cdot)) \cap \text{supp} ( \vp ( 2^{- j} \cdot)) =
  \varnothing$.
\end{enumerate}
In the sequel, we set $\vp_{j}(\xi):=\vp(2^{- j} \xi)$.
\end{notation}
For the existence of $\chi$ and $\vp$ see {\cite[Proposition 2.10]{BCD-bk}}. With this notation in mind, the Littlewood-Paley blocks are now defined as follows.
\begin{definition}
Let $u\in\cs'(\R^{d})$. We set
\[ \Delta_{- 1} u =\mathcal{F}^{- 1} ( \chi \, \mathcal{F}u), \quad
   \text{and for } j \geqslant 0 \quad
   \Delta_j u=\mathcal{F}^{- 1} ( \vp_{j}  \, \mathcal{F}u)
   . \]
We also use the notation $S_{k}u=\sum_{j=-1}^{k-1}\Delta_j u$, valid for all $k\ge 0$.
\end{definition}
Observe that one can also write $\Delta_{- 1} u =  \widetilde {K} \ast u$ and $\Delta_j u =
K_j \ast u$ for $j \geqslant 0$ , where $ \widetilde {K} =\mathcal{F}^{- 1} \chi$ and  $K_j =
2^{jd}\mathcal{F}^{- 1} \vp (2^j \cdot)$. In particular the $\Delta_j u$ are
smooth functions for all $u\in\cs'(\R^{d})$.

\smallskip

 In order to handle equations whose space parameter lies in   an unbounded domain like $\R^{d}$,
we shall use  spaces of weighted H\"older type functions for
polynomial or exponential weights, where the weights satisfy some
smoothness conditions. In this way we define the following class of weights.

\begin{definition}
We denote by $\cw$ the class of weights $w\in \cac_{b}^{\infty}(\R^{d};\R_{+})$  consisting of:
\begin{itemize}
\item
The weights $\rho_{\ka}$ obtained as functions of the form $c \,
(1+|x|^{\ka})^{-1}$, with $\ka \ge 1$, smoothed at 0.
\item
The weights $e_{\lambda}$ obtained as functions of the form $c \, e^{-\lambda |x|}$, with $\lambda>0$, smoothed at 0.
\item
Products of these functions.
\end{itemize}
\end{definition}

Notice that more general classes of weights are introduced in \cite{Tr-bk}. We have also tried to stick to the notation given in \cite{HPP}, from which our developments are inspired.

\smallskip

Weighted Besov spaces are sets of functions characterized by their Littlewood-Paley block decomposition. Specifically, their definition is as follows.
\begin{definition}
Let $w\in\cw$ and $\ka\in\R$. We set
\begin{equation}\label{eq:def-Besov-gamma}
\cb_{w}^{\ka}(\R^{d}) = \lcl f\in\cs^{\prime}(\R^d) ; \,
\|f\|_{w,\ka}:=\sup_ {j\ge -1 } 2^{j\ka} \| w \, \Delta_{j}
f\|_{L^{\infty}} < \infty \rcl\,.
\end{equation}
We  call this space a weighted Besov-H\"older space. When $w=1$, we
just denote the space by $\cb^{\ka}(\R^{d})$, and it corresponds to
the usual Besov space $\cb^{\ka}_{\infty,\infty}(\R^{d})$.
\end{definition}

Notice that we follow here the terminology of \cite{Tr-bk}. The
weighted Besov-H\"older spaces are well understood objects, and let
us recall some basic facts about them.

\begin{proposition}\label{prop:basic-weighted-besov}
Let $w,w_{1},w_{2}\in\cw$, $\ka\in\R$ and $f\in\cb_{w}^{\ka}(\R^{d})$. Then the following holds true:

\smallskip

\noindent \emph{(i)} There exist some positive constants
$c_{\ka,w}^{1},c_{\ka,w}^{2}$ such that $c_{\ka,w}^{1} \|f
w\|_{\ka}\le \|f\|_{w,\ka} \le c_{\ka,w}^{2} \|f w\|_{\ka}$.

\smallskip

\noindent \emph{(ii)} For $\ka\in  (0,1)$, we have $f\in
\cb_{w}^{\ka}(\R^{d})$ iff $f w$ is a $\ka$-H\"older  function.

\smallskip

\noindent
\emph{(iii)}
If $w_{1}<w_{2}$ we have $\|f\|_{w_{1},\ka}\le \|f\|_{w_{2},\ka}$.
\end{proposition}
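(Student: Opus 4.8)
The plan is to treat the three assertions in order of increasing difficulty: (iii) is immediate, (ii) reduces to (i) together with classical unweighted theory, and essentially all the work sits in the norm equivalence (i). For (iii), note that if $0\le w_{1}\le w_{2}$ pointwise then $|w_{1}(x)\,\Delta_{j}f(x)|\le|w_{2}(x)\,\Delta_{j}f(x)|$ for every $x$ and every $j\ge-1$, so $\|w_{1}\Delta_{j}f\|_{L^{\infty}}\le\|w_{2}\Delta_{j}f\|_{L^{\infty}}$; multiplying by $2^{j\ka}$ and taking the supremum over $j$ gives $\|f\|_{w_{1},\ka}\le\|f\|_{w_{2},\ka}$.

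For (i) the issue is to compare the ``weight outside the block'' quantity $\|w\,\Delta_{j}f\|_{\infty}$ with the ``weight inside'' quantity $\|\Delta_{j}(wf)\|_{\infty}$, the two differing by the commutator $[\Delta_{j},w]f=\Delta_{j}(wf)-w\,\Delta_{j}f$. I would first record the two structural features of the class $\cw$ that make this commutator harmless: every $w\in\cw$ is smooth with self-controlled derivatives, $|\partial^{\alpha}w(x)|\le C_{\alpha}\,w(x)$, and is of moderate growth, $w(x)\le C\,v(x-y)\,w(y)$ for a radial, increasing, submultiplicative weight $v$ (a power of $(1+|\cdot|)$ for the $\rho_{\ka}$'s, an exponential $e^{c|\cdot|}$ for the $e_{\lambda}$'s); both properties pass to $1/w$. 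Writing $\Delta_{j}=K_{j}\ast\,$ with $K_{j}(z)=2^{jd}K(2^{j}z)$ and $K=\cf^{-1}\vp$ Schwartz, the mean value theorem together with $|\nabla w|\le Cw$ and moderateness gives, with $g:=wf$,
\[
\big|[\Delta_{j},w]f(x)\big|\le C\int|K_{j}(x-y)|\,|x-y|\,v(x-y)\,|g(y)|\,dy .
\]
When $\ka>0$ one has $\|g\|_{\infty}\le C\|g\|_{\ka}\le C\|f\|_{w,\ka}$ by summing the blocks, and for polynomial $v$ the Schwartz decay of $K$ makes $\int|K_{j}(z)|\,|z|\,v(z)\,dz\le C2^{-j}$, so that $\|[\Delta_{j},w]f\|_{\infty}\le C2^{-j}\|f\|_{w,\ka}$; the identical computation with $1/w$ replacing $w$ controls $w\,[\Delta_{j},1/w]g$.

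Feeding these into the identities $\Delta_{j}(wf)=w\Delta_{j}f+[\Delta_{j},w]f$ and $w\Delta_{j}f=\Delta_{j}g+w[\Delta_{j},1/w]g$ yields, for $0<\ka<1$,
\[
2^{j\ka}\|\Delta_{j}(wf)\|_{\infty}\le\|f\|_{w,\ka}+C2^{(\ka-1)j}\|f\|_{w,\ka},\qquad 2^{j\ka}\|w\Delta_{j}f\|_{\infty}\le\|wf\|_{\ka}+C2^{(\ka-1)j}\|wf\|_{\ka},
\]
and since $2^{(\ka-1)j}$ is bounded over $j\ge-1$ when $\ka<1$, taking the supremum gives both inequalities of (i). The main obstacle is exactly these commutator estimates, and two points demand care. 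First, for $\ka\ge1$ the single gain $2^{-j}$ no longer bounds $2^{(\ka-1)j}$, so I would Taylor-expand $w$ to higher order and exploit the vanishing moments of $K_{j}$ (legitimate because $\widehat{\Delta_{j}f}$ is annulus-supported) to extract as many factors $2^{-j}$ as needed, the property $|\partial^{\alpha}w|\le C_{\alpha}w$ keeping the weight intact at each order. Second, and more seriously, for the exponential weights $e_{\lambda}$ the companion weight $v$ is itself exponential, and since a band-limited kernel cannot decay exponentially the integral $\int|K(u)|\,v(u)\,du$ diverges: the crude absolute-value (Schur) bound is then unavailable and one must use the oscillation of $K_{j}$ to recover the cancellation. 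This is precisely the content of Triebel's theory of spaces with admissible weights, so at this step I would either perform the refined non-absolute kernel estimates or invoke \cite{Tr-bk} directly.

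Finally, (ii) follows by combining (i) with the classical identification, for $\ka\in(0,1)$, of the unweighted space $\cb^{\ka}(\R^{d})=\cb^{\ka}_{\infty,\infty}(\R^{d})$ with the space of bounded $\ka$-H\"older functions (see \cite{BCD-bk}): by (i) one has $f\in\cb^{\ka}_{w}(\R^{d})$ if and only if $wf\in\cb^{\ka}(\R^{d})$, and the latter holds if and only if $wf$ is $\ka$-H\"older, which is exactly the claim.
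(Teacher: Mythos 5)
Your proposal is correct in substance, but it is worth noting that the paper's own ``proof'' is essentially a pair of citations: items (i) and (iii) are simply borrowed from \cite[Chapter 6]{Tr-bk}, and (ii) is deduced from (i) together with the identification $\cb^{\ka}(\R^{d})=\cac^{\ka}(\R^{d})$ for $\ka\in[0,1]$ from \cite[Theorem 2.36]{BCD-bk} --- exactly the reduction you perform at the end. Where you go further is in attempting a self-contained commutator proof of (i): your computation is sound for the polynomial weights $\rho_{\ka}$ and $0<\ka<1$ (the only cosmetic slip is the chain $\|g\|_{\infty}\le C\|g\|_{\ka}\le C\|f\|_{w,\ka}$, which reads as circular in the direction where $\|g\|_{\ka}=\|wf\|_{\ka}$ is the quantity being estimated; the direct bound $\|wf\|_{\infty}\le\sum_{j}\|w\Delta_{j}f\|_{\infty}\le C_{\ka}\|f\|_{w,\ka}$ is what you actually use and is fine), and you correctly identify the two genuine obstructions --- higher $\ka$, and the exponential weights $e_{\lambda}$, for which a band-limited kernel cannot absorb the companion weight $v$ by absolute-value estimates --- at which point you fall back on \cite{Tr-bk}, i.e.\ on the same reference the authors invoke wholesale. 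So the net effect is that your argument is a partial unpacking of the cited result plus an honest delegation of the hard case; this buys some transparency for the reader (one sees why the class $\cw$ is designed so that $|\partial^{\alpha}w|\le C_{\alpha}w$ and $w$ is moderate), at the cost of not actually closing the exponential-weight case that the paper needs, since the weights $w_{t}=e_{\lambda+\si t}$ used in the fixed-point argument are precisely of that type. One small further remark: the vanishing moments of $K_{j}$ follow from the fact that $\vp_{j}$ vanishes near the origin, so no appeal to the spectral support of $\Delta_{j}f$ is needed there.
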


\begin{proof}
Item (i) is borrowed from \cite[Chapter 6]{Tr-bk}.
The fact that $\cb^{\ka}(\R^{d})$ coincides with the space of H\"older continuous functions $\cac^{\ka}(\R^{d})$ for $\ka\in [0,1]$ is shown in \cite[Theorem 2.36]{BCD-bk}, and it yields item (ii) thanks to (i). Finally, item (iii) is also taken from  \cite[Chapter 6]{Tr-bk}.
\end{proof}

Let us now state a result about products of distributions which
turns out to be useful for our existence and uniqueness result.

\begin{proposition}
Let $w_{1},w_{2}$ be two  weight functions in $\cw$, and
$\ka_{1},\ka_{2}\in\R$ such that $\ka_{2}<0<\ka_{1}$ and
$\ka_{1}>|\ka_{2}|$ and let $w=w_{1}w_{2}$.   Then
\begin{equation}\label{eq:cty-product-besov}
(f_{1},f_{2}) \in \cb_{w_{1}}^{\ka_{1}} \times \cb_{w_{2}}^{\ka_{2}}
\longmapsto f_{1} \, f_{2} \in \cb_{w}^{\ka_{2}} \quad\text{is
continuous.}
\end{equation}
Furthermore, the following bound holds true:
\begin{equation}\label{eq:bnd-product-besov}
\left\|  f_{1} \, f_{2} \right\|_{\cb_{w}^{\ka_{2}}} \le \left\|
f_{1} \right\|_{\cb_{w_{1}}^{\ka_{1}}} \, \left\|  f_{2}
\right\|_{\cb_{w_{2}}^{\ka_{2}}} .
\end{equation}
\end{proposition}

Finally we label the action of the heat semigroup on functions in weighted Besov spaces.

\begin{proposition}\label{prop:heat-semigroup-besov}
Let $w\in\cw$, $\ka\in\R$ and $f\in\cb_{w}^{\ka}(\R^{d})$. Then for
all $t\in[0,\tau]$, $ \gamma>0$ and $\hat{\ka}>\ka$ we have
\begin{equation*}
\|p_{t}f\|_{w,\hat{\ka}} \le c_{\tau,w,\kappa ,\hat{\kappa}} t^{-\frac{\hat{\ka}-\ka}{2}} \|f\|_{w,\ka},
\quad\text{and}\quad
\|[\id- p_{t}] f\|_{w,\ka-2\ga} \le c_{\tau,w,\gamma}  \, t^{\ga} \|f\|_{w,\ka}.
\end{equation*}
\end{proposition}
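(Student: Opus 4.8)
The two bounds are classical parabolic smoothing estimates, and the plan is to prove them first in the unweighted spaces $\cb^{\ka}(\R^{d})$ and then to transfer them to $\cb_{w}^{\ka}(\R^{d})$ by means of the norm equivalence of Proposition~\ref{prop:basic-weighted-besov}(i). The starting observation is that $p_{t}$, $\id-p_{t}$ and the Littlewood--Paley projectors $\Delta_{j}$ are all Fourier multipliers, hence they commute; in particular $\Delta_{j}p_{t}f=p_{t}\Delta_{j}f$. Since $\cf(\Delta_{j}f)$ is supported in a dyadic annulus of the form $2^{j}C$, I would fix a smooth $\psi$ equal to $1$ on $C$ and supported on a slightly larger annulus, set $\psi_{j}(\xi)=\psi(2^{-j}\xi)$, and write $p_{t}\Delta_{j}f=K_{j,t}\ast\Delta_{j}f$ with $K_{j,t}=\cf^{-1}(\psi_{j}\,e^{-t|\cdot|^{2}/2})$, and likewise $\Delta_{j}[\id-p_{t}]f=L_{j,t}\ast\Delta_{j}f$ with $L_{j,t}=\cf^{-1}(\psi_{j}\,(1-e^{-t|\cdot|^{2}/2}))$. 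Everything then reduces to $L^{1}$ estimates for these two block kernels.

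The engine of the proof is the bound $\|K_{j,t}\|_{L^{1}}\le C\,e^{-c\,t2^{2j}}$ for $j\ge0$ (and $\|K_{-1,t}\|_{L^{1}}\le C$). I would obtain it from the scaling $\xi=2^{j}\zeta$, which gives $K_{j,t}(x)=2^{jd}(\cf^{-1}h_{s})(2^{j}x)$ with $s=t2^{2j}$ and $h_{s}(\zeta)=\psi(\zeta)e^{-s|\zeta|^{2}/2}$, so that $\|K_{j,t}\|_{L^{1}}=\|\cf^{-1}h_{s}\|_{L^{1}}$. On the support of $\psi$ one has $|\zeta|\ge 3/4$, so factoring out $e^{-cs}$ with $c$ slightly below $(3/4)^{2}/2$ leaves a family $h_{s}e^{cs}$ that is smooth, supported in a fixed annulus, and whose Schwartz seminorms are bounded uniformly in $s\ge0$ (because $\sup_{s}s^{k}e^{-cs}<\infty$); hence $\|\cf^{-1}h_{s}\|_{L^{1}}\le Ce^{-cs}$. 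For $L_{j,t}$ the symbol satisfies $|1-e^{-t|\xi|^{2}/2}|\le\min(1,t|\xi|^{2})\le C(t2^{2j})^{\ga}$ on $2^{j}C$ (for $\ga\in(0,1]$), and the same scaling argument yields $\|L_{j,t}\|_{L^{1}}\le C(t2^{2j})^{\ga}$. Granting these, the unweighted estimates follow by inserting the factors $2^{j\hat{\ka}}$ and $2^{j(\ka-2\ga)}$ and taking the supremum over $j$: for the first,
\[
2^{j\hat{\ka}}\|p_{t}\Delta_{j}f\|_{L^{\infty}}\le C\,e^{-c\,t2^{2j}}\,2^{j(\hat{\ka}-\ka)}\,2^{j\ka}\|\Delta_{j}f\|_{L^{\infty}},
\]
and $\sup_{r>0}r^{a}e^{-ctr}\le C(ct)^{-a}$ with $a=(\hat{\ka}-\ka)/2$ produces the factor $t^{-(\hat{\ka}-\ka)/2}$; for the second the gain $2^{2j\ga}$ carried by $L_{j,t}$ cancels the loss $2^{-2j\ga}$ hidden in the index $\ka-2\ga$, leaving exactly $Ct^{\ga}\,2^{j\ka}\|\Delta_{j}f\|_{L^{\infty}}$.

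The main obstacle is the passage to the weighted spaces. Using Proposition~\ref{prop:basic-weighted-besov}(i), the two claims are equivalent to the corresponding smoothing bounds for the conjugated operator $T_{t}=w\,p_{t}\,w^{-1}$ on $\cb^{\ka}(\R^{d})$, whose integral kernel is $w(x)p_{t}(x-y)w(y)^{-1}$. Repeating the block computation now produces the quantity $\sup_{x}\int|K_{j,t}(x-y)|\,\tfrac{w(x)}{w(y)}\,dy$. For the polynomial weights $\rho_{\ka}$ the Peetre-type inequality $\tfrac{w(x)}{w(y)}\le C(1+|x-y|)^{N}$ holds, and since the rescaled kernels $\cf^{-1}h_{s}$ are Schwartz with uniformly controlled decay, this weighted $L^{1}$ norm is still bounded by $Ce^{-c\,t2^{2j}}(1+t2^{2j})^{N}\le Ce^{-c'\,t2^{2j}}$, so the argument closes verbatim. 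The genuinely delicate case is that of the exponential weights $e_{\lambda}$, for which $\tfrac{w(x)}{w(y)}$ may grow like $e^{\lambda|x-y|}$ while the block kernels $K_{j,t}$ possess only Schwartz (polynomial) decay and hence no finite exponential moments; the naive weighted kernel bound therefore diverges. Here I would not estimate the blocks directly, but instead exploit that every $w\in\cw$ is an admissible weight in Triebel's sense, namely $|\partial^{\alpha}w|\le C_{\alpha}w$ for all $\alpha$, which is precisely the condition under which the scale $\{\cb_{w}^{\ka}\}$ is characterised by localisation and the heat semigroup has the stated mapping properties; see \cite[Chapter~6]{Tr-bk}. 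Concretely one writes $T_{t}=p_{t}+R_{t}$, where $R_{t}$ has kernel $p_{t}(x-y)\big(\tfrac{w(x)}{w(y)}-1\big)$, and uses admissibility to see that $\tfrac{w(x)}{w(y)}-1=O(|x-y|)$ at the kernel scale $\sqrt{t}$, so that $R_{t}$ gains one extra order of regularity and can be summed. Establishing this perturbative bound rigorously and uniformly in $t\in[0,\tau]$ is the crux of the whole argument.
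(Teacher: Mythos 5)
The paper does not actually prove Proposition \ref{prop:heat-semigroup-besov}: it is stated and used as a known fact, with no argument supplied, so there is no in-paper proof to compare against. On its own merits, your unweighted Littlewood--Paley argument is correct and standard (the block-kernel bounds $\|K_{j,t}\|_{L^{1}}\le Ce^{-ct2^{2j}}$ and $\|L_{j,t}\|_{L^{1}}\le C(t2^{2j})^{\ga}$ are obtained exactly as you describe; note only that the second estimate of the proposition is genuinely restricted to $\ga\le 1$, as your parenthetical acknowledges, which is all the paper ever uses). The transfer to the polynomial weights $\rho_{\ka}$ via the Peetre inequality and the uniform Schwartz control of the rescaled kernels is also complete and correct.

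The genuine gap is exactly where you locate it, and it is not optional: the exponential weights $e_{\la}$ are the ones the paper actually needs, since the solution space $\cd_{\la,\si}^{\te_{u},\ka_{u}}$ is built on $w_{t}=e_{\la+\si t}$. Your proposed repair does not close it, for two reasons. First, the appeal to \cite[Chapter 6]{Tr-bk} is misplaced: Triebel's admissible weights satisfy, besides $|\partial^{\al}w|\le C_{\al}w$, the polynomial ratio bound $w(x)\le C\,w(y)(1+|x-y|)^{N}$, which $e_{\la}$ violates; that framework therefore does not deliver the stated mapping properties for exponential weights. Second, the decomposition $T_{t}=p_{t}+R_{t}$ is only a sketch: $R_{t}$ is not a Fourier multiplier, so a pointwise kernel bound of the form $|x-y|\,p_{2t}(x-y)$ does not by itself give an estimate from $\cb^{-\ka}$ into $\cb^{\ka_{u}}$ when acting on distributions of negative regularity --- one would still have to run a commutator or paraproduct argument, and this is precisely the step you defer as ``the crux.'' The clean way to finish is to avoid the block kernels altogether in the weighted step and exploit that the full Gaussian, unlike a merely Schwartz kernel, has finite exponential moments: $e^{\la|z|}p_{t}(z)\le 2^{d/2}e^{\la^{2}t}p_{2t}(z)$, so the conjugated operator $w\,p_{t}\,w^{-1}$ has a kernel dominated, uniformly for $t\le\tau$, by a Gaussian at time $2t$ (and similarly for its time derivatives). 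Combining this with the thermic characterization of the H\"older--Besov norm, $\|g\|_{\ka}\asymp\|p_{1}g\|_{L^{\infty}}+\sup_{0<t\le 1}t^{m-\ka/2}\|\partial_{t}^{m}p_{t}g\|_{L^{\infty}}$ for $m>\ka/2$, in which every operation is a convolution with a Gaussian-decaying kernel, yields both estimates for $w=e_{\la}$ and for products of such weights, without ever integrating a Schwartz kernel against $e^{\la|x-y|}$.
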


\medskip
\subsubsection{Notion of solution}
In order to give a pathwise definition of solution for equation
\eqref{eqSt1}, we will replace the noise $W$ by a  nonrandom H\"older continuous function in time with values in a
Besov space of distributions, denoted by ${\mathscr W}$.  We will show later (see Proposition \ref{prop:hyp-gauss-pathwise})  that under Hypothesis \ref{hyp:mu-holder}, almost surely  the mapping $t\rightarrow W(\mathbf{1}_{[0,t]} \varphi)$, $\varphi \in \mathcal{D}$,  is H\"older continuous with values in this Besov space.  We thus label a notation for this kind of
space.

\begin{notation}
Let $\theta\in(0,1)$, $\ka\in\R$ and $w\in\cw$. The space of
$\te$-H\"older continuous functions from $[0,T]$ to a weighted
Sobolev space $\cb_{ w}^{\ka} $ is denoted by
$\cac_{T,w}^{\te,\ka}$. Otherwise stated, we have
$\cac_{T,w}^{\te,\ka}=\cac^{\te}([0,T]; \cb_{ w}^{\ka} )$. In
order to alleviate notations, we shall write $\cac_{w}^{\te,\ka}$
only when the value of $T$ is non ambiguous.
\end{notation}

Now we introduce the pathwise type assumption  that we shall make on  the multiplicative input distribution ${\mathscr W}$.

\begin{hypothesis}\label{hyp:W-pathwise-space-time}
We assume that there exist 
 two constants $\te, \ka\in(0,1)$   satisfying 
$\frac{1+\ka}{2} < \te <1$, 
such that 
$\W \in\cac_{T,\rho_{\si}}^{\te,-\ka}$, for any $\si >0$ arbitrarily small.
\end{hypothesis}

We also label some more notation for further use:

\begin{notation}
 For a function $f:[0,T]\to\cb$, where $\cb$ stands for a generic Banach space, we set $\delta f_{st}= f_{t}-f_{s}$ for $0\le s \le t \le T$. Notice that $\delta$ has also been used for Skorohod integrals, but this should not lead to ambiguities since Skorohod integrals won't be used in this section.  
 \end{notation}

With these preliminaries in hand, we shall combine the following ingredients in order to solve equation \eqref{eqSt1}: 
\smallskip

\noindent $\bullet$ Like the input  $\W$, the solution $u$ will live
in a space of H\"older functions in time, with values in a weighted
Sobolev space of the form $\cb_{ e_{\lambda}}^{\ka_{u}}$. This allows
the use of estimates of Young integration type in order  to define
integrals involving increments of the form $u \, d\W$.

\smallskip

\noindent $\bullet$ We have to take into account of the fact that,
when one multiplies the function $u_{s}\in\cb_{ e_{\la}}^{\ka_{u}}$
by the distribution $\delta \W_{st}\in\cb_{\rho_{\si}}^{-\ka}$, the
resulting distribution $u_{s}\delta \W_{st}$ lies (provided
$\ka_{u}>\ka$) into the space $\cb_{e_{\lambda}\rho_{\si}}^{-\ka}$.
This will force us to assume in fact
$u_{s}\in\cb_{w_{s}}^{\ka_{u}}$, where the weight $w_{s}\in\cw$
decreases with $s$.

\smallskip

\noindent Let us turn now to the technical part of our task. We first  fix positive constants $\lambda, \sigma$ and define
a weight $w_{t}=e_{\lambda+\si t}$. We shall seek the solution to
equation \eqref{eqSt1} in  the following space:
\begin{multline}\label{eq:def-D-gamma-kappa}
\cd_{\lambda, \si} ^{\te_{u},\ka_{u}} = \Big\{ f\in
\cac([0,T]\times\R^{d}) ; \, \|f_{t}\|_{\cb_{ w_{t}}^{\ka_{u}} }
\le c_{T,f} \\
\text{ and } \|f_{t} -  f_{s}\|_{\cb_{
w_{t}}^{\ka_{u}} } \le c_{f} \, |t-s|^{\te_{u}} \ \ \forall 0\le
s<t\le  T \Big\}.
\end{multline}
We introduce the H\"older  norm in this space by
\begin{equation}  \label{norm1}
\| f\| _{\cd _{\lambda, \si}^{\te_{u},\ka_{u}}} 
=\sup_{0\le s <t \le T} \frac {\|f_{t} -  f_{s}\|_{\cb_{
w_{t}}^{\ka_{u}} }}  {  |t-s|^{\te_{u}}}.
\end{equation}

We now introduce a  pathwise mild formulation  for equation \eqref{eqSt1}   in the spaces $\cd_{\lambda, \si} ^{\te_{u},\ka_{u}}$.

\begin{definition}\label{def:heat-mild-space-time}
Suppose that $\W$ satisfies Hypothesis \ref{hyp:W-pathwise-space-time}.
Let $u\in\cd_{\la, \si} ^{\te_{u},\ka_{u}}$ for  fixed $\lambda, \si>0$, $\te_{u}+\te >1$ and $\ka_{u}\in (\ka  , 1)$.  Consider an initial condition $u_{0}\in\cb_{e_{\lambda}}^{\ka_{u}}$.  We say that $u$ is a mild solution to equation
\begin{equation}\label{eqBe1}
\frac{\partial u}{\partial t}=\frac{1}{2}\Delta u + u\frac{\partial \W}{
\partial t}  
\end{equation}
 with initial condition $u_0$ if it satisfies the following integral equation 
\begin{equation}\label{eq:mild-young-she-space-time}
u_{t} = p_{t}u_{0} + \int_{0}^{t} p_{t-s} \lp u_{s} \, \W(ds) \rp ,
\end{equation}
where the product $u \, \W$ is interpreted in the distributional sense  of \eqref{eq:cty-product-besov}  and the time integral is understood in the Young sense.
\end{definition}

\begin{remark}\label{rmk:def-J-u}
Let us specify what we mean by $J_{t}^{u} :=\int_{0}^{t} p_{t-s} \lp u_{s} \, \W(ds) \rp$ under the conditions of Definition \ref{def:heat-mild-space-time}. First, we should understand $J_{t}^{u}$ as
\begin{equation*}
J_{t}^{u} = \lim_{\ep\to 0} J_{t}^{u,\ep},
\quad\text{where}\quad
J_{t}^{u,\ep} = \int_{0}^{t-\ep} p_{t-s} \lp u_{s} \, \W(ds) \rp .
\end{equation*}
The integration on $[0,t-\ep]$ avoids any singularity of $p_{t-s}$
as an operator from $\cb^{-\ka}$ to $\cb^{\ka_{u}}$, so that
$J_{t}^{u,\ep}$ is  defined as a Young integral. This integral  is
in particular limit of Riemann sums along dyadic partitions of
$[0,t]$:
\begin{equation*}
J_{t}^{u,\ep} = \lim_{n\to\infty} \sum_{j=0}^{2^{n}-1} p_{t-t_{j}^{n}} \lp u_{t_{j}^{n}} \, \delta \W_{t_{j}^{n}t_{j+1}^{n}} \rp \1_{[0,t-\ep]}(t_{j+1}^{n}),
\quad\text{where}\quad
t_{j}^{n} = \frac{j t}{2^{n}}.
\end{equation*}
We then assume that one can combine the limiting procedures in $n$ and $\ep$ (the justification of this step is left to the patient reader), and finally we define:
\begin{equation}\label{eq:def-J-u}
J_{t}^{u} = \lim_{n\to  \infty} J_{t}^{u,n},
\quad\text{where}\quad
J_{t}^{u,n} = \sum_{j=0}^{2^{n}-1} p_{t-t_{j}^{n}} \lp u_{t_{j}^{n}} \, \delta \W_{t_{j}^{n}t_{j+1}^{n}} \rp .
\end{equation}
Here again, recall that the product $u_{t_{j}^{n}} \, \delta \W_{t_{j}^{n}t_{j+1}^{n}}$ is interpreted according to \eqref{eq:cty-product-besov}.
This will be our way to understand equation \eqref{eq:mild-young-she-space-time}.
\end{remark}

We can now turn to the resolution of the equation in this context.

\subsubsection{Resolution of the equation}
Our existence and uniqueness result takes the following form:

\begin{theorem}\label{thm:pathwise-solt-space-time}
Let $\W$ be a H\"older continuous distribution valued function satisfying Hypothesis \ref{hyp:W-pathwise-space-time}, and let $\lambda,\sigma$ be two strictly positive constants. Consider an initial condition $u_{0}\in\cb_{e_{\la}}^{\ka_{u}}$. Then:

\smallskip

\noindent
\emph{(a)} There exist $\te_{u},\ka_{u}$ satisfying $\te_{u}+\te >1$ and $\ka_{u}\in (\ka , 1)$, such that equation~\eqref{eq:mild-young-she-space-time} admits a unique solution in $\cd_{\lambda, \si}^{\te_{u},\ka_{u}}$.

\smallskip

\noindent
\emph{(b)} The application $(u_{0},\W)\mapsto u$ is continuous from $\cb_{e_{\la}}^{\ka_{u}} \times \cac_{T,\rho_{\si}}^{\theta,-\ka}$ to  $\cd_{\lambda, \si}^{\te_{u},\ka_{u}}$.
\end{theorem}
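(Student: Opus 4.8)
The plan is to solve the mild equation \eqref{eq:mild-young-she-space-time} by a Banach fixed point argument in the space $\cd_{\la,\si}^{\te_{u},\ka_{u}}$, exploiting the fact that the equation is linear in $u$. I would introduce the solution map
\[
\cg(u)_{t} = p_{t}u_{0} + J_{t}^{u}, \qquad J_{t}^{u} = \int_{0}^{t} p_{t-s}\lp u_{s}\,\W(ds)\rp,
\]
where $J^{u}$ is the convolutional Young integral described in Remark \ref{rmk:def-J-u}, and show that for a suitable choice of $\te_{u},\ka_{u}$ (and on a short enough time interval) $\cg$ maps $\cd_{\la,\si}^{\te_{u},\ka_{u}}$ into itself and is a contraction there. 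The contribution $p_{t}u_{0}$ is directly controlled by the smoothing estimate of Proposition \ref{prop:heat-semigroup-besov}, so the whole analysis reduces to estimating the Young integral $J^{u}$.

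The technical core is therefore a sewing-type bound for $J^{u}$. Writing $\hat J_{st} = \int_{s}^{t} p_{t-v}(u_{v}\,\W(dv))$, so that the semigroup property gives $J_{t}^{u} = p_{t-s}J_{s}^{u} + \hat J_{st}$, I would approximate $\hat J_{st}$ by the germ $A_{st} = p_{t-s}(u_{s}\,\delta\W_{st})$ and control its defect across an intermediate point $m\in(s,t)$, which after using the semigroup property reads
\[
A_{st} - A_{mt} - p_{t-m}A_{sm} = p_{t-m}\lc \lp p_{m-s}u_{s} - u_{m}\rp \delta\W_{mt}\rc.
\]
The bracket splits as $(p_{m-s}-\id)u_{s} + (u_{s}-u_{m})$, the first term being estimated by the second bound of Proposition \ref{prop:heat-semigroup-besov} and the second by the H\"older seminorm of $u$ in $\cd_{\la,\si}^{\te_{u},\ka_{u}}$. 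Multiplying by $\delta\W_{mt}\in\cb^{-\ka}$ through the product estimate \eqref{eq:bnd-product-besov} and applying the smoothing of the outer factor $p_{t-m}$, the defect is bounded by $c\,(m-s)^{\ga\wedge\te_{u}}(t-m)^{\te-(\ka_{u}+\ka)/2}\|u\|\,\|\W\|$, whose total H\"older budget exceeds one once the kernel singularity is accounted for; a sewing lemma for convolutional Young integrals then produces $J^{u}$ as the limit of the Riemann sums of Remark \ref{rmk:def-J-u}, together with a bound $\|J^{u}\|_{\cd_{\la,\si}^{\te_{u},\ka_{u}}} \le C\,T^{\eta}\,\|\W\|_{\cac_{T,\rho_{\si}}^{\te,-\ka}}\,\|u\|_{\cd_{\la,\si}^{\te_{u},\ka_{u}}}$ for some $\eta>0$.

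The delicate bookkeeping lies in two intertwined points. First, the heat factor $p_{t-v}$ maps $\cb^{-\ka}$ into $\cb^{\ka_{u}}$ at the price of a singularity $(t-v)^{-(\ka_{u}+\ka)/2}$, so the germ has order $|t-s|^{\te-(\ka_{u}+\ka)/2}$ in $\cb^{\ka_{u}}$; this forces the choice $\te_{u}\le \te-(\ka_{u}+\ka)/2$, and the requirement $\te_{u}+\te>1$ then amounts to $(\ka_{u}+\ka)/2 < 2\te-1$, that is $\ka<\ka_{u}<4\te-2-\ka$. This window for $\ka_{u}$ is nonempty precisely when $\te>\tfrac{1+\ka}{2}$, which is exactly Hypothesis \ref{hyp:W-pathwise-space-time}; this is where the assumption on $\te$ enters in an essential way, and I regard the verification that a consistent pair $(\te_{u},\ka_{u})$ exists as the main obstacle. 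Second, since $u_{s}\in\cb_{w_{s}}^{\ka_{u}}$ multiplied by $\delta\W_{st}\in\cb_{\rho_{\si}}^{-\ka}$ only yields an element of $\cb_{w_{s}\rho_{\si}}^{-\ka}$, I would use the monotonicity of Proposition \ref{prop:basic-weighted-besov}(iii) together with the fact that $w_{t}=e_{\la+\si t}$ increases its exponential decay rate in $t$ to compare $w_{t}$ with $w_{s}\rho_{\si}$; this introduces at most a harmless factor $(t-v)^{-\si}$, absorbed into the exponent budget because the polynomial rate $\si$ of the weight on $\W$ may be taken arbitrarily small in Hypothesis \ref{hyp:W-pathwise-space-time}.

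With these estimates in hand, the self-mapping of $\cg$ follows, and by linearity $\cg(u^{1})-\cg(u^{2})=J^{u^{1}-u^{2}}$ obeys the same bound with a prefactor $T^{\eta}$; choosing a first interval $[0,T_{0}]$ with $C\,T_{0}^{\eta}\|\W\|<1$ makes $\cg$ a contraction, and Banach's theorem yields a unique fixed point there. Since the equation is linear (so no blow-up occurs) and the relevant constants depend only on $\|\W\|$ over $[0,T]$, iterating on $[T_{0},2T_{0}],\dots$ patches the local solutions into a unique solution on $[0,T]$, which proves (a). For (b), continuity follows from the same estimates: $u$ depends linearly on $u_{0}$, while for two inputs $\W^{1},\W^{2}$ the difference of the corresponding solutions solves an equation whose forcing is controlled by $\|u^{2}\|\,\|\W^{1}-\W^{2}\|$ and whose homogeneous part is again a small-time contraction, so that the stable dependence of the fixed point on the data gives the claimed continuity after patching.
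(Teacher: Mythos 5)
Your overall architecture --- a fixed point argument for $u\mapsto p_{\cdot}u_{0}+J^{u}$ in $\cd_{\la,\si}^{\te_{u},\ka_{u}}$ on a short interval, with the key input being a Young/sewing bound on the convolutional integral obtained from a two-term decomposition of the local defect, the exponent window for $\ka_{u}$ being nonempty exactly when $\te>\frac{1+\ka}{2}$, the weight comparison $w_{t}\le c\,(t-s)^{-\si}w_{s}\rho_{\si}$, and patching by linearity --- is the same as the paper's, which simply carries out by hand the dyadic telescoping that underlies the sewing lemma. There is, however, a genuine flaw in your central computation. The defect of the germ $A_{st}=p_{t-s}(u_{s}\,\delta\W_{st})$ is
\[
A_{st}-A_{mt}-p_{t-m}A_{sm}
= p_{t-m}\lc p_{m-s}\lp u_{s}\,\delta\W_{mt}\rp - u_{m}\,\delta\W_{mt}\rc ,
\]
and not $p_{t-m}\lc(p_{m-s}u_{s}-u_{m})\,\delta\W_{mt}\rc$: the heat semigroup does not commute with multiplication by the distribution $\delta\W_{mt}$.

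The distinction is not cosmetic. In your version the piece $(p_{m-s}-\id)u_{s}$ must retain positive regularity exceeding $\ka$ for its product with $\delta\W_{mt}$ to be defined through \eqref{eq:bnd-product-besov}; by Proposition \ref{prop:heat-semigroup-besov} this caps the gain at $(m-s)^{\gamma}$ with $\gamma<\frac{\ka_{u}-\ka}{2}$. Chasing this through the sewing estimate forces $\ka_{u}>\ka+2(1-\te)$, and combined with your own requirement $\ka_{u}<4\te-2-\ka$ (coming from $\te_{u}\le\te-\frac{\ka_{u}+\ka}{2}$ and $\te_{u}+\te>1$) this needs $\te>\frac{2+\ka}{3}$, which is strictly stronger than Hypothesis \ref{hyp:W-pathwise-space-time}. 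In the correct identity, $p_{m-s}-\id$ acts on the product $u_{s}\,\delta\W_{mt}\in\cb_{w}^{-\ka}$, so it may be allowed to lose an arbitrary amount $2(1-\te+\ep)$ of regularity while gaining $(m-s)^{1-\te+\ep}$; the loss is recovered by the outer smoothing kernel at a polynomial cost in the distance to the endpoint, which is integrable over the partition. This is precisely the paper's splitting into the terms $L^{11,n,j}$ (increment of $u$) and $L^{12,n,j}$ (increment of the semigroup acting on the product), and it is what recovers the full range $\te>\frac{1+\ka}{2}$. Once this identity is corrected, the remainder of your argument --- self-mapping, contraction on a small interval, patching, and the stability reasoning for part (b) --- is sound and matches the paper's proof.
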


\begin{proof}
We divide this proof into several steps.

\smallskip

\noindent \textit{Step 1: Definition of a contracting map.} 
We fix a time interval $[0,\tau]$, where $\tau \le T$, and along the proof we    denote  by  $\cd_{\lambda, \si}^{\te_{u},\ka_{u}}$ and  $\| \cdot \|_{\cd_{\lambda, \si} ^{\te_{u},\ka_{u}}}$ the space and the H\"older norm defined in (\ref{eq:def-D-gamma-kappa}) and (\ref{norm1}), respectively, but restricted to the interval $[0,\tau]$.  

We
consider a map $\gga$ defined on $\cd_{\lambda, \si}^{\te_{u},\ka_{u}}$
by $\gga(u)=v$, where $v$ is the function defined by $ v:=p_{t}u_{0}+
J_{t}^{u}$ as in Remark \ref{rmk:def-J-u}. The proof of our result
relies on two  steps: (i) Show that $\gga$ defines a map from
$\cd_{\lambda, \si} ^{\te_{u},\ka_{u}}$ to
$\cd_{\lambda, \si}^{\te_{u},\ka_{u}}$, independently of the length of
the interval $[0,\tau]$. (ii) Check that $\gga$ is in fact a
contraction if $\tau$ is made small enough. The two steps hinge on
the same type of computations, so that we shall admit point (i) and
focus on point (ii) for sake of conciseness.

\smallskip

In order to prove that $\gga$ is a contraction, consider
$u^{1},u^{2}\in\cd_{\lambda, \si} ^{\te_{u},\ka_{u}}$, and for $j=1,2$ set
$v^{j}=\gga(u^{j})$. For notational sake, we also set
$u^{12}=u^{1}-u^{2}$ and $v^{12}=v^{1}-v^{2}$. Consistently with
equation \eqref{eq:mild-young-she-space-time}, $v^{12}$ satisfies
the relation 
\begin{equation*}
v^{12}_{t}  =
\int_{0}^{t} p_{t-r}\lp u^{12}_{r} \, \W(dr) \rp\,.
\end{equation*}
Notice that the function $v^{12}$ is in fact defined by relation
\eqref{eq:def-J-u}. We have admitted point~(i) above, which means in
particular that we assume that the Riemann sums in
\eqref{eq:def-J-u} are converging whenever
$u^{12}\in\cd_{\lambda, \si}^{\te_{u},\ka_{u}}$. We now wish to prove
that, provided $\tau$ is small enough, we have
$\|v^{12}\|_{\cd_{\lambda, \si} ^{\te_{u},\ka_{u}}} \le \frac12
\|u^{12}\|_{\cd_{\lambda, \si}^{\te_{u},\ka_{u}}}$.

\smallskip

\noindent
\textit{Step 2: Study of differences.}
Let $0\le s < t \le \tau$. We decompose $v^{12}_{t} - v^{12}_{s}$ as $L_{st}^{1}+L_{st}^{2}$, with
\begin{equation*}
L_{st}^{1} = \int_{0}^{s} \lc p_{t-s} -\id \rc p_{s-v}\lp u^{12}_{v} \, \W(dv) \rp ,
\quad\text{and}\quad
L_{st}^{2} = \int_{s}^{t} p_{t-v}\lp u^{12}_{v} \, \W(dv) \rp,
\end{equation*}
where the Young integrals with respect to $\W(dv)$ are understood as limit of Riemann sums as in \eqref{eq:def-J-u}. We now proceed to the analysis of $L_{st}^{1}$ and $L_{st}^{2}$.

\smallskip

As in relation \eqref{eq:def-J-u}, we write $L_{st}^{1}=\lim_{n\to\infty}L_{st}^{1,n}$, where we consider points  $s_{k}^{n}=2^{-n}k s$ in the dyadic partition of $[0,s]$ and where we set
\begin{equation}\label{eq:def-L-1n}
L_{st}^{1,n}
=
\sum_{j=0}^{2^{n}-1}
\lc p_{t-s} -\id \rc p_{s-s_{j}^{n}} \lp u^{12}_{s_{j}^{n}} \, \delta \W_{s_{j}^{n}s_{j+1}^{n}} \rp.
\end{equation}
In order to estimate $L_{st}^{1}$, let us thus first analyze the quantity $L^{1,n+1}_{st}-L^{1,n}_{st}$. Indeed, it is readily checked that $L^{1,n+1}_{st}-L^{1,n}_{st}
= \sum_{j=0}^{2^{n}-1}  L^{1,n,j}_{st}$, where $L^{1,n,j}_{st}$ is defined by:
\begin{equation*}
L^{1,n,j}_{st} =
\lc p_{t-s} -\id \rc p_{s-s_{2j+1}^{n+1}} \lp u^{12}_{s_{2j+1}^{n+1}} \, \delta \W_{s_{2j+1}^{n+1}s_{2j+2}^{n+1}} \rp
-\lc p_{t-s} -\id \rc p_{s-s_{2j}^{n+1}} \lp u^{12}_{s_{2j}^{n+1}} \, \delta \W_{s_{2j+1}^{n+1}s_{2j+2}^{n+1}} \rp.
\end{equation*}
We now drop the index $n+1$ in the next computations for sake of readability, and write $L^{1,n,j}_{st} = L^{11,n,j}_{st} -L^{12,n,j}_{st}$ with
\begin{eqnarray*}
L^{11,n,j}_{st}&=&
\lc p_{t-s} -\id \rc p_{s-s_{2j+1}} \lp \delta u^{12}_{s_{2j}s_{2j+1}} \, \delta \W_{s_{2j+1}s_{2j+2}} \rp
:=  \lc p_{t-s} -\id \rc \hat{L}^{11,n,j}_{st}  \\
L^{12,n,j}_{st}&=&
\lc p_{t-s} -\id \rc \lc p_{s_{2j+1}-s_{2j}} -\id  \rc 
 p_{s-s_{2j+1}} 
\lp u^{12}_{s_{2j}} \, \delta \W_{s_{2j+1}s_{2j+2}} \rp
:=   \lc p_{t-s} -\id \rc \hat{L}^{12,n,j}_{st}.
\end{eqnarray*}
We treat again the two terms $L^{11,n,j}_{st},L^{12,n,j}_{st}$ separately.

\smallskip

Owing to Proposition \ref{prop:heat-semigroup-besov}, we have
\begin{equation*}
\| L^{11,n,j}_{st} \|_{\cb_{ w_{t}}^{\ka_{u}} } \le c \,
(t-s)^{\te_{u}} \, \| \hat{L}^{11,n,j}_{st} \|_{\cb_{
w_{t}}^{\ka_{u} + 2 \te _{u}} } \le \frac{c \, (t-s)^{\te_{u}} \, \|
\delta u^{12}_{s_{2j}s_{2j+1}} \, \delta \W_{s_{2j+1}s_{2j+2}}
\|_{\cb_{ w_{t}}^{-\ka}} }
{(s-s_{2j+1})^{\te_{u}+\frac{\ka_{u}+\ka}{2}}}
\end{equation*}
Let us now recall the following elementary bound:
\begin{equation}\label{eq:bnd-polynomial-exponential}
\vp_{\al,\ka}(x) :=x^{\al} \, e^{-\ka x}
\quad\Longrightarrow\quad
0 \le \vp_{\al,\ka}(x) \le \frac{c_{\al}}{\ka^{\al}},
\quad\text{for}\quad
x,\al,\ka\in\R_{+}.
\end{equation}
This entails $w_{t} \le c_{\si} (t-t_{2j+1})^{-\si} w_{t_{2j+1}}
\rho_{\si}$, and according to \eqref{eq:bnd-product-besov} we obtain
\begin{eqnarray*}
\| L^{11,n,j}_{st} \|_{\cb_{ w_{t}}^{\ka_{u}} } &\le&  \frac{c_{\si}
\, (t-s)^{\te_{u}} \| \delta u^{12}_{s_{2j}s_{2j+1}} \, \delta
\W_{s_{2j+1}s_{2j+2}} \|_{\cb_{ w_{s_{2j+1} \rho_{\si} }}^{-\ka }}}
{(s-s_{2j+1})^{\te_{u}+\frac{\ka_{u}+\ka}{2}+\si}} \\
&\le&  \frac{c_{\si} \, (t-s)^{\te_{u}} \| \delta
u^{12}_{s_{2j}s_{2j+1}} \|_{\cb_{w_{s_{2j+1}}}^{\ka_{u}}} \,
\|\delta \W_{s_{2j+1}s_{2j+2}} \|_{\cb_{\rho_{\si}}^{-\ka}}}
{(s-s_{2j+1})^{\te_{u}+\frac{\ka_{u}+\ka}{2}+\si}} \\
&\le&  \frac{c_{\si} \, (t-s)^{\te_{u}}
\| u^{12} \|_{\cd_{\la,\si}^{\te_{u},\ka_{u}}} \, \|\W \|_{\cac_{\rho_{\si}}^{\te,-\ka}}}
{(s-s_{2j+1})^{\te_{u}+\frac{\ka_{u}+\ka}{2}+\si}} \, \lp \frac{s}{2^{n}} \rp^{\te_{u}+\te}.
\end{eqnarray*}
As far as $L^{12,n,j}_{st}$ is concerned, we have as above:
\begin{equation}\label{eq:bnd-L12-hat-L12}
\| L^{12,n,j}_{st} \|_{\cb_{ w_{t}}^{\ka_{u}} } \le c \,
(t-s)^{\te_{u}} \, \| \hat{L}^{12,n,j}_{st} \|_{ \cb_{
w_{t}}^{\ka_{u} + 2\te_{u}} } .
\end{equation}
We now take an arbitrarily small and strictly positive constant $\ep$ and write:
\begin{eqnarray*}
\| \hat{L}^{12,n,j}_{st} \|_{\cb_{ w_{t}}^{\ka_{u}+2\te_{u}}} &\le&
(s_{2j+1}-s_{2j})^{1-\te+\ep} \left\| p_{s-s_{2j+1}}\lp
u^{12}_{s_{2j}} \,
\delta \W_{s_{2j+1}s_{2j+2}} \rp\right\|_{\cb_{w_{t}}^{\ka_{u}+2\te_{u}+2(1-\te+\ep)} } \\
&\le&
\frac{(s_{2j+1}-s_{2j})^{1-\te+\ep}}{(s-s_{2j+1})^{1+\te_{u}-\te+\ep+\frac{\ka_{u}+\ka}{2}}}
\, \| u^{12}_{s_{2j}} \, \delta \W_{s_{2j+1}s_{2j+2}} \|_{\cb_{
w_{t}}^{-\ka}} \, ,
\end{eqnarray*}
and thus relation \eqref{eq:bnd-L12-hat-L12} entails:
\begin{equation*}
\| L^{12,n,j}_{st} \|_{\cb_{ w_{t}}^{\ka_{u}} } \le \frac{c_{\si} \,
(t-s)^{\te_{u}} \, \| u^{12} \|_{\cd_{\la,\si}^{\te_{u},\ka_{u}}} \,
\|\W \|_{\cac_{\rho_{\si}}^{\te,-\ka}}}
{(s-s_{2j+1})^{1+\te_{u}-\te+\ep+\frac{\ka_{u}+\ka}{2}+\si}}
\, \lp \frac{s}{2^{n}} \rp^{1+\ep}.
\end{equation*}
Putting together the last two estimates on $L^{11,n,j}_{st}$ and
$L^{12,n,j}_{st}$ and choosing $\te_{u}=1-\te+\ep$, we thus end
up with:
\begin{equation}\label{eq:bnd-J-fnj}
\| L^{1,n,j}_{st} \|_{\cb_{ w_{t}}^{\ka_{u}} } \le \frac{c_{\si} \,
(t-s)^{\te_{u}} \| u^{12} \|_{\cd_{\la,\si}^{\te_{u},\ka_{u}}} \,
\|\W \|_{\cac_{\rho_{\si}}^{\te,-\ka}}}
{(s-s_{2j+1})^{2-2\te+2\ep+\frac{\ka_{u}+\ka}{2}+\si}} \,
\lp \frac{s}{2^{n}} \rp^{1+\ep}.
\end{equation}

Let us now discuss exponent values: for the convergence of $L^{1,n}_{st}$ we need the condition
\begin{equation*}
2-2\te+2\ep+\frac{\ka_{u}+\ka}{2}+\si < 1
\end{equation*}
to be fulfilled. If we choose $\ka_{u}=\ka+2\ep$, we can recast this condition into $\te>\frac{1+\ka}{2} + \frac{3\ep+\si}{2}$. Since $\ep,\si$ are chosen to be arbitrarily small, we can satisfy this constraint as soon as $\te>\frac{1+\ka}{2}$, which was part of our Hypothesis \ref{hyp:W-pathwise-space-time}. For the remainder of the discussion, we thus assume that
\begin{equation*}
2-2\te+2\ep+\frac{\ka_{u}+\ka}{2}+\si = 1 - \eta,
\quad\text{with}\quad
\eta >0.
\end{equation*}

\smallskip

\noindent
\textit{Step 3: Bound on $L^{1}_{st}$.}
We express $\lim_{n\to\infty}L^{1,n}_{st}$ as $L^{1,0}_{st}+\sum_{n=0}^{\infty}(L^{1,n+1}_{st}-L^{1,n}_{st})$. Now
\begin{equation*}
\sum_{n=0}^{\infty} \| L^{1,n+1}_{st}-L^{1,n}_{st}\|_{\cb_{
w_{t}}^{\ka_{u}} } \le \sum_{n=0}^{\infty} \sum_{j=0}^{2^{n}-1} \|
L^{1,n,j}_{st} \|_{\cb_{ w_{t}}^{\ka_{u}} },
\end{equation*}
and plugging our estimate \eqref{eq:bnd-J-fnj}, we get that
$\sum_{n=0}^{\infty} \| L^{1,n+1}_{st}-L^{1,n}_{st}\|_{\cb_{
w_{t}}^{\ka_{u}} }$ is bounded by:
\begin{equation*}
c_{\si} \,
\| u^{12} \|_{\cd_{\la,\si}^{\te_{u},\ka_{u}}} \, \|\W \|_{\cac_{\rho_{\si}}^{\te,-\ka}}
\, (t-s)^{\te_{u}}
\sum_{n=0}^{\infty} \lp \frac{s}{2^{n}} \rp^{\ep}
\left(
\frac{s}{2^{n}} \,\sum_{j=0}^{2^{n}-1}
\frac{1}{(s-s_{2j+1})^{1-\eta}} \,   \right).
\end{equation*}
Furthermore, the following uniform bound holds true:
\begin{equation*}
\frac{s}{2^{n}} \, \sum_{j=0}^{2^{n}-1}
\frac{1}{(s-s_{2j+1})^{1-\eta}} \,
\le c\,
\int_{0}^{s} \frac{dr}{r^{1-\eta}}
= c\, s^{\eta},
\end{equation*}
and thus
\begin{equation*}
\sum_{n=0}^{\infty} \| L^{1,n+1}_{st}-L^{1,n}_{st}\|_{\cb_{
w_{t}}^{\ka_{u}} } \le c \, s^{\eta} \, (t-s)^{\te_{u}}
\sum_{n=0}^{\infty} \lp \frac{s}{2^{n}} \rp^{\ep} \le c\,
s^{\eta+\ep} \, (t-s)^{\te_{u}},
\end{equation*}
which ensures the convergence of $L^{1,n}_{st}$. Finally, invoking our definition \eqref{eq:def-L-1n} plus the fact that $u_{0}^{12}=0$, it is readily checked  that $L^{1,0}_{st}=0$. Thus the relation above transfers into:
\begin{eqnarray}\label{eq:bnd-L1}
\| L^{1}_{st}\|_{\cb_{ w_{t}}^{\ka_{u}} }  &\le& c \, s^{\eta+\ep} \,
\| u^{12} \|_{\cd_{\la,\si}^{\te_{u},\ka_{u}}}
\, \|\W \|_{\cac_{\rho_{\si}}^{\te, -\ka}}(t-s)^{\te_{u}}  \notag \\
&\le&  c \, \tau^{\eta+\ep}
\, \| u^{12} \|_{\cd_{\la,\si}^{\te_{u},\ka_{u}}}
\, \|\W \|_{\cac_{\rho_{\si}}^{\te,-\ka}} \,  (t-s)^{\te_{u}}.
\end{eqnarray}

\smallskip

\noindent
\textit{Step 4: Bound on $L^{2}_{st}$.}
The bound on $L^{2}_{st}$ follows along the same lines as for $L^{1,n}_{st}$, and is in fact slightly easier. Let us just mention that we approximate $L^{2}_{st}$ by a sequence $L^{2,n}_{st}$ based on the dyadic partition of $[s,t]$, namely $s_{j}^{n}=s+j 2^{-n} (t-s)$. Like in Step 2, we end up with some terms $L^{21,n,j}_{st}, L^{22,n,j}_{st}$, where 
\[
L^{21,n,j}_{st}=
 p_{s-s_{2j+1}} ( \delta u^{12}_{s_{2j}s_{2j+1}} \, \delta \W_{s_{2j+1}s_{2j+2}} )
 \]
  and 
\begin{equation*}
L^{22,n,j}_{st}=
 \lc p_{s_{2j+1}-s_{2j}} -\id  \rc  p_{t-s_{2j+1}}
 \lp u^{12}_{s_{2j}} \, \delta \W_{s_{2j+1}s_{2j+2}} \rp.
\end{equation*}
From this decomposition, we leave to the patient reader the task of checking that relation~\eqref{eq:bnd-L1} also holds true for $L^{2}_{st}$.

\smallskip

\noindent
\textit{Step 5: Conclusion.}
Putting together the last 2 steps, we have been able to prove that for all $0\le s<t\le\tau$ we have
\begin{equation*}
\| v^{12}_{t} - v^{12}_{s}\|_{\cb_{ w_{t}}^{\ka_{u}} } \le c \,
\tau^{\eta+\ep} \, \| u^{12} \|_{\cd_{\lambda, \si} ^{\te_{u},\ka_{u}}} \,
\| \W \|_{\cac_{\rho_{\si}}^{\te,-\ka}}  \, (t-s)^{\te_{u}} .
\end{equation*}
Thus, choosing $\tau = (c\,\|\W \|_{\cac_{\rho_{\si}}^{\te,-\ka}}/2)^{1/(\ep+\eta)}$, this yields
\begin{equation*}
\| v^{12}_{t} - v^{12}_{s}\|_{\cb_{ w_{t}}^{\ka_{u}} } \le \frac12
\, \| u^{12} \|_{\cd_{\lambda, \si} ^{\te_{u},\ka_{u}}}
  \, (t-s)^{\te_{u}},
\end{equation*}
namely the announced contraction property. We have thus obtained existence and uniqueness of the solution to equation \eqref{eq:mild-young-she-space-time} on $[0,\tau]$. In order to get a global solution on an arbitrary interval, it suffices to observe that all our bounds above do not depend on the initial condition of the solution. One can thus patch solutions on small intervals of constant length $\tau$. The continuity result \emph{(b)} is obtained thanks to the same kind of considerations, and we spare the details to the reader for sake of conciseness.

\end{proof}

\subsubsection{Identification of the Feynman-Kac solution}
This section is devoted to the identification of the solution to the stochastic heat equation given by the Feynman-Kac representation formula and the pathwise solution constructed in this section. Calling $u^{F}$ the Feynman-Kac solution, the global strategy for this identification procedure is the following:
\begin{enumerate}
\item
Relate the covariance structure \eqref{cov1} of the Gaussian noise $W$ to Hypothesis \ref{hyp:W-pathwise-space-time}. We shall see that our Hypothesis \ref{hyp:mu-holder}  implies that $W$ satisfies \ref{hyp:W-pathwise-space-time} almost surely for suitable values of the parameters $\te$ and $\kappa$.
\item
Prove that $u^{F}$ coincides with the pathwise solution to \eqref{eq:mild-young-she-space-time}, by means of approximations of the noise $W$.
\end{enumerate}
We now handle those three problems.

\smallskip

Let us start by establishing  the  pathwise property of   $W$ as a distribution valued function.

\begin{proposition}\label{prop:hyp-gauss-pathwise}
Let $W$ be a centered Gaussian noise defined by $\mu$ and $\ga$ as in  \eqref{cov1}, satisfying Hypothesis \ref{hyp:mu-holder} for some $0<\al<1-\beta$. 
Then  the mapping $(t,\varphi)\rightarrow W(\mathbf{1}_{[0,t]} \varphi)$ is almost surely  H\"older continuous of order $\theta$ in time with values in   $\cb^{   -\kappa}_{\rho_\sigma}$  for arbitrarily small $\si$ and for all $\te, \ka\in(0,1) $ such that $ \te <1-\frac \beta 2$ and
$ \ka >1-\al-\beta$.  That is, almost surely $W$ satisfies  Hypothesis   \ref{hyp:W-pathwise-space-time}.
Moreover,  $\|W\|_{  \mathcal {C}^{\te,-\ka}_{\rho_\si}}$  is a random variable which admits moments of all orders.
 
\end{proposition}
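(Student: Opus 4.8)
The plan is to treat $W_{t} := W(\mathbf{1}_{[0,t]}\,\cdot)$ as a random element of $\cs'(\R^{d})$ and to control directly its Littlewood--Paley blocks, which here read $\Delta_{j}W_{t}(x) = W(\mathbf{1}_{[0,t]}K_{j}(x-\cdot))$ with $K_{j}=\cf^{-1}\vp_{j}$ (and $K_{-1}=\widetilde{K}$). For fixed $j,x,s,t$ this is a centered Gaussian variable living in the first chaos of $W$, so the whole analysis reduces to estimating its variance. Using the spectral form \eqref{innprod1} of the inner product together with $\cf(K_{j}(x-\cdot))(\xi)=e^{-\imath\langle\xi,x\rangle}\vp_{j}(\xi)$, the covariance factorizes and, crucially, the phase $e^{-\imath\langle\xi,x\rangle}$ disappears after taking the modulus. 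First I would record
\[
\be\lc |\Delta_{j}(W_{t}-W_{s})(x)|^{2}\rc = \lp \int_{(s,t]^{2}}\gamma(u-v)\,du\,dv\rp \lp \int_{\R^{d}}\vp_{j}(\xi)^{2}\,\mu(d\xi)\rp,
\]
which is independent of $x$; the field $x\mapsto \Delta_{j}W_{t}(x)$ is therefore stationary.

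Next I would bound the two factors separately. For the time factor, the upper bound \eqref{t1} on $\gamma$ and an elementary Beta-type computation give $\int_{(s,t]^{2}}\gamma(u-v)\,du\,dv \le C_{\beta}\int_{(s,t]^{2}}|u-v|^{-\beta}\,du\,dv \le c\,|t-s|^{2-\beta}$. For the spectral factor, since $\vp_{j}$ is supported in the annulus $\{|\xi|\sim 2^{j}\}$ one has $1+|\xi|^{2(1-\al-\beta)}\sim 2^{2j(1-\al-\beta)}$ there (recall $1-\al-\beta>0$), whence $\int \vp_{j}^{2}\,d\mu \le \|\vp\|_{\infty}^{2}\, 2^{2j(1-\al-\beta)}\int \tfrac{\mu(d\xi)}{1+|\xi|^{2(1-\al-\beta)}}$, the last integral being finite by Hypothesis \ref{hyp:mu-holder}. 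Combining, $\be[|\Delta_{j}(W_{t}-W_{s})(x)|^{2}]\le c\,|t-s|^{2-\beta}2^{2j(1-\al-\beta)}$ uniformly in $x$ (the block $j=-1$ being even more harmless, bounded by $c\,|t-s|^{2-\beta}\mu(B)$).

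Then I would convert this $L^{2}$ estimate into a bound on the full weighted Besov norm. By Gaussian hypercontractivity in the first chaos, $\be[|\Delta_{j}(W_{t}-W_{s})(x)|^{p}]\le c_{p}\,(|t-s|^{2-\beta}2^{2j(1-\al-\beta)})^{p/2}$, again uniformly in $x$. To pass from this pointwise control to the supremum over $x\in\R^{d}$ built into $\|\cdot\|_{\cb^{-\ka}_{\rho_{\si}}}$, I would use a weighted Besov embedding of the form $\cb^{-\ka+d/p}_{p,p,w}\hookrightarrow \cb^{-\ka}_{\infty,\infty,w}$ (see \cite{Tr-bk}, possibly at the price of an arbitrarily small change of the weight exponent $\si$), together with the band-limited nature of $\Delta_{j}$ and the integrability $\int\rho_{\si}^{p}\,dx<\infty$ valid once $\si p>d$. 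This reduces the norm bound to summing the pointwise moments:
\[
\be\lc \|W_{t}-W_{s}\|_{\cb^{-\ka}_{\rho_{\si}}}^{p}\rc \le c\,|t-s|^{(1-\beta/2)p}\sum_{j\ge -1}2^{jp(-\ka+d/p+1-\al-\beta)},
\]
and the geometric series converges provided $\ka>1-\al-\beta+d/p$, which holds for $p$ large since $\ka>1-\al-\beta$. This yields $\be[\|W_{t}-W_{s}\|_{\cb^{-\ka}_{\rho_{\si}}}^{p}]\le c_{p}\,|t-s|^{(1-\beta/2)p}$ for all large $p$.

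Finally, I would apply the Garsia--Rodemich--Rumsey / Kolmogorov continuity theorem for Banach-space valued processes to the bound above. Since $(1-\beta/2)-1/p\to 1-\beta/2$ as $p\to\infty$, for any prescribed $\te<1-\beta/2$ one can choose $p$ large enough to obtain a $\te$-H\"older continuous modification of $t\mapsto W_{t}$ with values in $\cb^{-\ka}_{\rho_{\si}}$, and since the bound holds for all large $p$, the norm $\|W\|_{\cac^{\te,-\ka}_{\rho_{\si}}}$ has moments of all orders. To see that such parameters can meet the additional requirement $\frac{1+\ka}{2}<\te$ of Hypothesis \ref{hyp:W-pathwise-space-time}, take $\ka$ close to $1-\al-\beta$ and $\te$ close to $1-\beta/2$: then $\frac{1+\ka}{2}\approx 1-\frac{\al+\beta}{2}<1-\frac{\beta}{2}\approx\te$ precisely because $\al>0$, so the strict inequality can be secured. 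I expect the main obstacle to be the passage carried out in the third step, namely from the clean, $x$-independent pointwise Gaussian estimates to the supremum over the unbounded spatial domain: this is where the interplay between the weight $\rho_{\si}$ (indispensable since the stationary field $\Delta_{j}W_{t}$ is unbounded on all of $\R^{d}$) and the band-limited Bernstein-type estimates must be handled with care.
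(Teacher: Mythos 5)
Your proposal is correct and follows essentially the same route as the paper's proof: estimate the variance of the Gaussian first-chaos variables $\Delta_j(W_t-W_s)(x)=W(\mathbf{1}_{(s,t]}\otimes K_{j,x})$ via the spectral representation (time factor $\lesssim |t-s|^{2-\beta}$, spectral factor $\lesssim 2^{2j(1-\al-\beta)}$ from the annulus support of $\vp_j$ and Hypothesis \ref{hyp:mu-holder}), upgrade to $p$-th moments by equivalence of Gaussian moments, pass to the weighted $\cb^{-\ka}_{\rho_\si}$ norm via a Besov embedding from a $\cb_{p,p}$-type space at the cost of an arbitrarily small loss in the regularity index, and conclude with Garsia--Rodemich--Rumsey/Kolmogorov. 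Your explicit verification that the resulting exponents are compatible with the constraint $\frac{1+\ka}{2}<\te$ of Hypothesis \ref{hyp:W-pathwise-space-time} is a welcome detail that the paper leaves implicit.
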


\begin{proof}[Proof of Proposition \ref{prop:hyp-gauss-pathwise}]
Fix $\ka >\ka' >1-\al-\beta$.
For $q\ge 1$, let us denote the Besov space
$\cb_{2q,2q,\rho_{\si}}^{-\ka' }$ by $\ca_{q}$, and recall that
the norm on $\ca_{q}$ is given by:
\begin{equation*}
\| f\|_{\ca_{q}}^{2q} = \sum_{j\ge -1} 2^{-2qj \ka'  } \|\Delta_{j} f\|_{L_{\rho_{\si}}^{2q}}^{2q}.
\end{equation*}
We will choose $q $ large enough so that
$\ca_{q}\hookrightarrow \cb_{\rho_{\si}}^{-\ka }$, a fact which is
ensured by Besov embedding theorems. We will show that almost
surely:
\begin{equation}\label{eq:as-bnd-increments-W}
\| \delta W_{st}\|_{\ca_{q}} \le Z \, (t-s)^{ \te},
\end{equation}
for any $\te\in (0,  1-\frac \beta 2  )$ and the random variable $Z$ admitting moments of all orders. This will
 complete the proof of the proposition.

\smallskip

 To this aim, recall from Section \ref{sec:besov-spaces}  that $\Delta_{j} f(x) = [K_{j}*f](x)$, where $K_{j}(z)=2^{jd} K(2^{j}z)$ and $K$ is the inverse Fourier transform of $\vp$. Otherwise stated, $K_{j}$ is the inverse Fourier transform of $\vp_{j}$.
With these preliminary considerations in mind, set $K_{j,x}(y):=K_{j}(x-y)$ and evaluate:
\begin{eqnarray}\label{eq:bnd-W-in-Aq}
\be\lc \|\delta W_{st} \|_{\ca_{q}}^{2q} \rc
&=&
\sum_{j\ge -1} 2^{-2qj\ka'}
\int_{\R^{d}} \be\lc \lln W\lp \1_{[s,t]} \otimes K_{j,x} \rp \rrn^{2q} \rc \rho_{\si}^{2q}(x) \, dx  \notag \\
&\le& c_{q}
\sum_{j\ge -1} 2^{-2qj\ka'}
\int_{\R^{d}} \be^{q}\lc \lln W\lp \1_{[s,t]} \otimes K_{j,x} \rp \rrn^{2} \rc \rho_{\si}^{2q}(x) \, dx
\end{eqnarray}
Moreover, we have  
\begin{eqnarray}\label{eq:bnd-var-W-Kj}
\be\lc \lln W\lp \1_{[s,t]} \otimes K_{j,x} \rp \rrn^{2} \rc
&=&
\int_{[s,t]^{2}} \lp \int_{\R^{d}} \lln  \cf K_{j,x} \rrn^{2} \mu(d\xi) \rp  \ga(u-v)  \, du dv \notag \\
&\le&
(t-s)^{2-\beta } \int_{\R^{d}} \lln  \vp\lp 2^{-j} \xi \rp \rrn^{2} \, \mu(d\xi).
\end{eqnarray}
Let us introduce the measure $\nu(d\xi)=\mu(d\xi)/(1+|\xi|^{2(1-\al-\beta)})$, which is a finite measure on $\R^{d}$ according to our standing assumption. Also recall from Notation \ref{not:partition-unity} that ${\rm Supp}(\vp)\subset \{ x \in \mathbb{R}^d : a \leqslant | x | \leqslant b \}$. Hence
\begin{equation*}
\int_{\R^{d}} \lln  \vp\lp 2^{-j} \xi \rp \rrn^{2} \, \mu(d\xi)
\le
\int_{\R^{d}} \1_{[0,2^{j}b]}(|\xi|) \lc 1+ |\xi|^{2(1-\al-\beta)} \rc \, \nu(d\xi)
\le
c_{\mu} \, 2^{2(1-\al-\beta) j}.
\end{equation*}
Plugging this identity into \eqref{eq:bnd-var-W-Kj} and then \eqref{eq:bnd-W-in-Aq} we end up with the relation
$\be[ \|\delta W_{st} \|_{\ca_{q}}^{2q} ] \le c_{q} (t-s)^{ (2-\beta)  q}$, valid for all $0\le s < t \le T$ and any $q\ge 1$. A standard application of Garsia's and Fernique's lemma then yields relation \eqref{eq:as-bnd-increments-W}, and thus Hypothesis \ref{hyp:W-pathwise-space-time} .
\end{proof}

\smallskip
\begin{remark}
 In particular, equation
\eqref{eqBe1} driven by $W$ admits a unique
pathwise solution in $\cd_{\lambda, \si}^{\te_{u},\ka_{u}}$, as in
Theorem \ref{thm:pathwise-solt-space-time}, for some  $\te_u >\frac \beta 2$ and $\ka_u >1-\al-\beta$. Notice here that one obtains (see Theorem \ref{thmFK1}) the existence of a solution to our equation in the Stratonovich sense under Hypothesis \ref{hyp:mu2} only. We call this assumption the critical case. In order to get existence and uniqueness of a pathwise solution we have to impose the more restrictive Hypothesis \ref{hyp:mu-holder} with an arbitrarily small constant $\al$, which can be seen as a supercritical situation. This is the price to pay in order to get uniqueness of the solution.
\end{remark}

 We now turn to the second point of our strategy, namely prove that the Feynman-Kac solution $u^{F}$  coincides with the  unique pathwise solution to equation  \ref{eqBe1} driven by $W$.

\begin{proposition}\label{prop:identif-FK-pathwise}
Let $u^F$ be the random field given by equation (\ref{FK}).  Assume that $W$ satisfies Hypothesis  \ref{hyp:mu-holder}. Then there exist $\theta_{u}>\frac{\beta}{2}$ and $\ka _u> 1-\al-\beta $ such that almost surely $u^F$ belongs to the space $\cd_{\lambda, \si}^{\te_{u},\ka_{u}}$. 
Moreover, $u^F$ is the pathwise solution to equation  (\ref{eqBe1}) driven by $W$.
\end{proposition}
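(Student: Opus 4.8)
The plan is to identify $u^{F}$ with the unique pathwise solution by approximating the noise $W$ and using the continuity of the solution map established in Theorem \ref{thm:pathwise-solt-space-time}(b). The starting point is a mollification of the noise: for $\ep>0$, let $W^{\ep}$ denote the noise whose spatial part has been regularized against the heat kernel $p_{\ep}$ (equivalently, replace $\delta_0$ by $p_{\ep}$ in the Feynman-Kac exponent, as in \eqref{equ3} and \eqref{FKaprox}). For such a smoothed noise the multiplicative input is a genuine function, the Stratonovich and pathwise interpretations of the product coincide, and the classical Feynman-Kac formula \eqref{eq:FK-approx-u} gives that $u^{F,\ep}$ is simultaneously the Feynman-Kac functional associated to $W^{\ep}$ and the pathwise solution to \eqref{eq:mild-young-she-space-time} driven by $W^{\ep}$. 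This is the point where regularization removes all analytic subtleties.

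Next I would pass to the limit $\ep\to 0$ on both sides and match the two limits. On the pathwise side, Proposition \ref{prop:hyp-gauss-pathwise} shows that $W$ satisfies Hypothesis \ref{hyp:W-pathwise-space-time} almost surely with moments of all orders for the parameters $\te<1-\frac{\beta}{2}$, $\ka>1-\al-\beta$; the same argument applied to the difference $\delta(W^{\ep}-W)_{st}$ (using that $\cf K_{j,x}$ is supported in an annulus and $|\cf p_{\ep}-1|\to 0$ there by dominated convergence) yields $\|W^{\ep}-W\|_{\cac^{\te,-\ka}_{\rho_{\si}}}\to 0$ in $L^{p}(\Omega)$ for every $p$. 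By the continuity statement of Theorem \ref{thm:pathwise-solt-space-time}(b), the associated pathwise solutions $u^{F,\ep}$ converge in $\cd_{\la,\si}^{\te_{u},\ka_{u}}$ to the unique pathwise solution driven by $W$. In particular this already shows $u^{F}\in\cd_{\la,\si}^{\te_{u},\ka_{u}}$ almost surely, which is the first assertion of the proposition, once we check $u^{F,\ep}\to u^{F}$ in the same topology.

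For the Feynman-Kac side, the convergence $u^{F,\ep}_{t,x}\to u^{F}_{t,x}$ is controlled exactly as in Proposition \ref{proplim}: the exponent $V^{\ep}_{t,x}=W(A^{\ep}_{t,x})$ converges in $L^{2}(\Omega)$ to $V_{t,x}$ by Proposition \ref{propdelta}, and the uniform exponential integrability from Theorem \ref{thmexp} (together with the moment bound \eqref{m8}) upgrades this to $L^{p}$ convergence of $u^{F,\ep}_{t,x}$, uniformly over compact sets of $(t,x)$. The one genuinely delicate point will be to promote this pointwise-in-$(t,x)$ and $L^{p}(\Omega)$-convergence to convergence in the strong norm of $\cd_{\la,\si}^{\te_{u},\ka_{u}}$, i.e.\ to control the space-time H\"older increments of $u^{F,\ep}-u^{F}$ uniformly in $\ep$; this requires re-running the Kolmogorov-type estimate \eqref{eq:bnd-moments-u} of Theorem \ref{thmHolder} for the difference process and verifying that the bounds there are uniform in $\ep$, which follows from the uniform exponential integrability in $\ep$ used throughout Section \ref{sec:eq-strato}.

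Finally, uniqueness of limits pins everything down: both $u^{F}$ and the pathwise solution arise as limits in $\cd_{\la,\si}^{\te_{u},\ka_{u}}$ of the same approximating sequence $u^{F,\ep}$, hence they coincide. The main obstacle, as indicated, is the uniform-in-$\ep$ H\"older control needed to make the two convergences take place in the same (strong) topology rather than merely pointwise; once that is secured, the identification is immediate from the uniqueness part of Theorem \ref{thm:pathwise-solt-space-time}(a).
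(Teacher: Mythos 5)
Your proposal follows essentially the same route as the paper: regularize the noise (the paper smooths in both time and space, via $W^{\ep,\delta}$ and the approximations $u^{\ep,\delta}$ of \eqref{FKaprox}), observe that the classical Feynman--Kac formula makes the regularized functional the pathwise solution driven by the regularized noise, and pass to the limit using the almost sure convergence $W^{\ep_n,\delta_n}\to W$ in $\cac^{\te,-\ka}_{T,\rho_\si}$ together with the continuity statement of Theorem \ref{thm:pathwise-solt-space-time}(b) on one side and Proposition \ref{proplim} on the other. The only place you diverge is the final ``main obstacle'': the uniform-in-$\ep$ H\"older control of $u^{F,\ep}-u^{F}$ is not actually needed, since convergence of $u^{F,\ep}$ in the $\cd_{\la,\si}^{\te_u,\ka_u}$ norm to the pathwise solution already implies pointwise convergence, and matching this with the pointwise $L^{p}$ convergence $u^{F,\ep}_{t,x}\to u^{F}_{t,x}$ identifies the two limits (and places $u^{F}$ in $\cd_{\la,\si}^{\te_u,\ka_u}$) with no further estimate --- which is exactly how the paper concludes.
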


 \begin{proof}
   To show that $u^F$ is the pathwise solution to equation  \ref{eqBe1}, we use the fact that $u_{t,x}^F$ is the limit  in $L^p(\Omega)$ of the approximating sequence $u^{ \ep,\del}_{t,x}$ introduced in (\ref{FKaprox}) (see (\ref{m9})) as $\ep$ and $\del$ tend to zero, for any $p\ge 1$.
 On the other hand,  it is clear that $u^{ \ep,\del}$ is the pathwise solution to equation (\ref{eqBe1}) driven by the trajectories
 of $W^{\ep,\del}$
 \[
u^{\ep,\del}_{t} = p_{t}u_{0} + \int_{0}^{t} p_{t-s} \lp u^{\ep,\delta}_{s} \, W^{\ep,\del}(ds) \rp.
 \]
 Then, it suffices to take the limit in the above equation   to show that $u^F$ is a  pathwise solution to equation (\ref{eqBe1}) driven by $W$.  In fact,  that for two particular sequences $\ep_n \downarrow 0$ and $\del_n\downarrow 0$
  $W^{\ep_n,\del_n}$  converges to $W$ almost surely in the space  $\mathcal{C}^{\theta, -\ka}_{T, \rho_\si}$. This implies (see Theorem \ref{thm:pathwise-solt-space-time} item (b)) that 
  $u^{\ep_n,\del_n}$ converges  almost surely to a process $u$ in $\mathcal{D}^{\theta_u, \ka_u}_{\la,\si}$, which is  the pathwise solution to equation \ref{eqBe1} driven by $W$. Therefore, $u=u^F$ and this concludes the proof. 
 \end{proof}

\subsubsection{Time independent case}

 The case of a time independent noise is obviously easier to handle than the time dependent one. Basically, the Young integration arguments invoked above can be skipped, and they are replaced by Gronwall type lemmas for Lebesgue integration. We won't detail the proofs here, and just mention the main steps for sake of conciseness.

\smallskip
First, the pathwise type assumption we make on the noise $W$, considered as a distribution on $\R^{d}$, is the following counterpart of Hypothesis \ref{hyp:W-pathwise-space-time}:

\begin{hypothesis}\label{hyp:W-pathwise-d-dim}
Suppose that $\W$ is a distribution on $\R^{d}$ such that $\W\in \cb_{\rho_{\si}}^{-\ka}$  with
$\ka \in( 0,1)$ and an arbitrarily small constant $\si>0$.
\end{hypothesis}

Another simplification of the time independent case is that one can solve the equation in a space of continuous functions in time (compared to the H\"older regularity we had to consider before), with values in weighted Besov spaces. We thus define the following sets of functions
\begin{equation*}
\cac_{\la,\si}^{\ka_{u}} = \lcl f\in
\cac([0,T]\times\R^{d}) ; \|f_{t}\|_{\cb_{w_{t}}^{\ka_{u}}} \le
c_{f} \rcl, \quad\text{where}\quad w_{t} := e_{\la+\si t}.
\end{equation*}

With these conventions in hand, we interpret equation \eqref{eqSt2} as a mild equation in the spaces $\cac_{\la,\si}^{\ka_{u}}$.

\begin{definition}\label{def:heat-mild-d-dim}
Let $u\in\cac_{\la,\si}^{\ka_{u}}$ for $\la,\si>0$ and
$\ka_{u}\in ( \ka, 1)$. Consider an initial condition $u_{0}\in\cb_{e_{\la}}^{\ka_{u}}$.
We say that $u$ is a mild
solution to equation
\begin{equation}
\frac {\partial u}{\partial t} = \frac 12 \Delta u+ u \W
\end{equation}
with initial condition $u_0$ if it satisfies the following
integral equation
\begin{equation}\label{eq:mild-young-she-d-dim}
u_{t} = p_{t}u_{0} + \int_{0}^{t} p_{t-s} \lp u_{s} \, \W \rp \, ds,
\end{equation}
where the product $u \,  \W$ is interpreted in the distributional
sense.
\end{definition}

We can now turn to the resolution of the equation in this context, and the main theorem in this direction is the following.

\begin{theorem}
Let   $\W$ be  a distribution satisfying Hypothesis \ref{hyp:W-pathwise-d-dim} and let $\la$ be a strictly positive constant.
 Then equation
\eqref{eq:mild-young-she-d-dim} admits a unique solution in
$\cac_{\la,\si}^{\ka_{u}}$, in the sense given by Definition
\ref{def:heat-mild-d-dim}, with $ \ka<\ka_{u}<1$.
\end{theorem}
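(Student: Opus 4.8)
The plan is to follow a standard Picard fixed-point scheme, mimicking the proof of Theorem \ref{thm:pathwise-solt-space-time} but with the Young-integration machinery replaced by the much simpler bounds available for the Lebesgue (Bochner) integral $\int_0^t p_{t-s}(u_s\,\W)\,ds$, which is legitimate here precisely because $\W$ is a fixed distribution and no time increments $\delta\W$ are involved. Fix $\tau\le T$ and introduce on $\cac_{\la,\si}^{\ka_{u}}$ (restricted to $[0,\tau]$) the map $\gga$ given by $\gga(u)_t = p_t u_0 + \int_0^t p_{t-s}(u_s\,\W)\,ds$, where the product $u_s\,\W$ is understood in the distributional sense of \eqref{eq:cty-product-besov}. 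The entire argument reduces to a single pointwise estimate on the integrand.

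First I would establish that estimate. Writing $w_t=e_{\la+\si t}$, the weight inequality $w_t \le c_\si\,(t-s)^{-\si}\,w_s\,\rho_\si$ --- which follows from \eqref{eq:bnd-polynomial-exponential} exactly as in the time-dependent case --- together with the monotonicity in Proposition \ref{prop:basic-weighted-besov}~(iii), the smoothing estimate of Proposition \ref{prop:heat-semigroup-besov}, and the product bound \eqref{eq:bnd-product-besov}, yields
\[
\| p_{t-s}(u_s\,\W)\|_{\cb_{w_t}^{\ka_{u}}}
\le c_\si\,(t-s)^{-\si-\frac{\ka_{u}+\ka}{2}}\,\|u_s\|_{\cb_{w_s}^{\ka_{u}}}\,\|\W\|_{\cb_{\rho_\si}^{-\ka}},
\]
valid provided $\ka_{u}>\ka$, so that the hypotheses $\ka_{2}<0<\ka_{1}$ and $\ka_{1}>|\ka_{2}|$ of the product proposition hold with $\ka_{1}=\ka_{u}$, $\ka_{2}=-\ka$. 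Since $\si$ may be taken arbitrarily small and $\ka<\ka_{u}<1$, the exponent $\si+\frac{\ka_{u}+\ka}{2}$ is strictly less than $1$, whence $\int_0^t(t-s)^{-\si-\frac{\ka_{u}+\ka}{2}}\,ds \le c\,t^{\,1-\si-\frac{\ka_{u}+\ka}{2}}$ is finite. Combined with the bound $\|p_t u_0\|_{\cb_{w_t}^{\ka_{u}}}\le c\,\|u_0\|_{\cb_{e_\la}^{\ka_{u}}}$ (using $w_t\le e_\la$ and Proposition \ref{prop:heat-semigroup-besov}), this shows $\sup_{t\le\tau}\|\gga(u)_t\|_{\cb_{w_t}^{\ka_{u}}}<\infty$; continuity in time of $\gga(u)$ follows from dominated convergence applied to the same kernel, so $\gga$ maps $\cac_{\la,\si}^{\ka_{u}}$ into itself.

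Next I would upgrade this into a contraction. By linearity of the equation, for $u^{1},u^{2}$ with $u^{12}=u^{1}-u^{2}$ we have $\gga(u^{1})_t-\gga(u^{2})_t=\int_0^t p_{t-s}(u^{12}_s\,\W)\,ds$, and the estimate above gives
\[
\sup_{t\le\tau}\|\gga(u^{1})_t-\gga(u^{2})_t\|_{\cb_{w_t}^{\ka_{u}}}
\le c_\si\,\tau^{\,1-\si-\frac{\ka_{u}+\ka}{2}}\,\|\W\|_{\cb_{\rho_\si}^{-\ka}}\,\sup_{t\le\tau}\|u^{12}_t\|_{\cb_{w_t}^{\ka_{u}}}.
\]
Fixing $\ka_{u}\in(\ka,1)$ and then choosing $\tau$ small enough that the prefactor is below $\tfrac12$ makes $\gga$ a contraction on $[0,\tau]$, hence it has a unique fixed point there. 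Since this $\tau$ does not depend on the initial datum, one patches solutions on consecutive intervals of fixed length to reach $[0,T]$, and local uniqueness propagates to global uniqueness. Equivalently --- and this is the ``Gronwall type'' route alluded to in the text --- one may argue globally at once: the kernel $(t-s)^{-\si-\frac{\ka_{u}+\ka}{2}}$ is a singular Volterra kernel with integrable singularity, the Picard iterates gain factorial factors of Mittag-Leffler type $\Gamma(1-\si-\tfrac{\ka_{u}+\ka}{2})^{n}/\Gamma(n(1-\si-\tfrac{\ka_{u}+\ka}{2})+1)$ whose series converges, and the same fractional Gronwall lemma applied to the difference of two solutions forces it to vanish.

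All the steps are essentially routine; the only delicate point is the weight bookkeeping --- tracking how the exponential weight $w_s$ produced by the product $u_s\,\W$ must be absorbed into $w_t$ at the cost of the extra factor $(t-s)^{-\si}$, which is precisely what generates the integrability constraint $\si+\frac{\ka_{u}+\ka}{2}<1$ and explains the restriction $\ka_{u}<1$. I expect the remaining mild obstacle to be the verification of time-continuity of $\gga(u)$ as $s\to t$, where the kernel is singular; this is handled by splitting the integral near the endpoint exactly as in Remark \ref{rmk:def-J-u}.
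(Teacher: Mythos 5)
Your proposal is correct and follows essentially the same route as the paper: the identical chain of estimates (the weight inequality $w_t\le c\,(t-s)^{-\si}w_s\rho_\si$ from \eqref{eq:bnd-polynomial-exponential}, the smoothing bound of Proposition \ref{prop:heat-semigroup-besov}, and the product estimate \eqref{eq:bnd-product-besov} with $\ka_1=\ka_u$, $\ka_2=-\ka$) producing the singular Volterra kernel $(t-s)^{-\si-\frac{\ka_u+\ka}{2}}$ with integrable singularity, closed by a Gronwall-type (or equivalently small-interval contraction) argument. The paper only writes out the uniqueness half in detail and invokes Gronwall directly on the difference of two solutions; your additional Picard-iteration details for existence are the routine complement the paper leaves implicit.
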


\begin{proof}
As in the proof of Theorem \ref{thm:pathwise-solt-space-time}, we focus on the proof of uniqueness, and fix a small time interval $[0,\tau]$. Consider $u^{1},u^{2}$ two solutions in $\cac_{\la,\si}^{\ka_{u}}$ and we set $u^{12}=u^{1}-u^{2}$. Consistently with Definition \ref{def:heat-mild-d-dim}, the equation for $u^{12}$ is given by:
\begin{equation}\label{eq:def-u12-d-dim}
u^{12}_{t} = \int_{0}^{t} p_{t-s} \lp  u^{12}_{s}  \, \W \rp \,
ds,
\end{equation}
and we wish to prove that $u^{12}\equiv 0$.

\smallskip

Towards this aim, let us bound the Besov norm of $u$ starting from equation \eqref{eq:def-u12-d-dim}. Owing to Proposition \ref{prop:heat-semigroup-besov}, we get
\begin{equation*}
\| u^{12}_{t} \|_{\cb_{w_{t}}^{\ka_{u}}} \le \int_{0}^{t}  \| p_{t-s} \lp
 u^{12}_{s}  \, \W \rp \|_{\cb_{w_{t}}^{\ka_{u}}} \, ds \le
c_{\tau,\la,\si} \int_{0}^{t}
(t-s)^{-\frac{(\ka_{u}+\ka)}{2}} \, \|  u_{s}^{12} \, \W
\|_{\cb_{w_{t}}^{-\ka}} \, ds .
\end{equation*}
Along the same lines as in the proof of Theorem \ref{thm:pathwise-solt-space-time}, we now invoke the bound \eqref{eq:bnd-polynomial-exponential}, which
yields $w_{t}\le c _{\tau,\la,\si} \, (t-s)^{-\si} w_{s} \, \rho_{\si}$. Hence, according to Proposition
\ref{prop:basic-weighted-besov} item (iii), we have
\begin{equation*}
\| u^{12}_{t} \|_{\cb_{w_{t}}^{\ka_{u}}} \le c_{\la,\si}
\int_{0}^{t}  (t-s)^{-\frac{(\ka_{u}+\ka)}{2} -\si} \, \|
u_{s}^{12} \W  \|_{\cb_{w_{s} \rho_{\si}}^{-\ka}} \, ds .
\end{equation*}
Since $\ka_{u}>\ka$, we now apply relation
\eqref{eq:bnd-product-besov} with $w_{1}=w_{s}$, $\ka_{1}=\ka_{u}$,
$w_{2}=\rho_{\si}$ and $\ka_{2}=\ka$. We end up with
\begin{equation*}
\| u^{12}_{t} \|_{\cb_{w_{t}}^{\ka_{u}}} \le c_{\tau,\la,\si} \, 
\|\W\|_{\cb_{\rho_{\si}}^{-\ka}} 
\int_{0}^{t} \frac{\|u_{s}^{12}\|_{\cb_{w_{s}}^{\ka_{u}}}}{(t-s)^{\frac{(\ka_{u}+\ka)}{2}
+\si} } \, ds\,.
\end{equation*}
Taking into account   that $\ka_{u}+\ka<2$ and $\si$  can be
arbitrarily small, our conclusion $u^{12}\equiv 0$ follows easily from  a Gronwall
type argument.
\end{proof}

We now state a result which allows to identify the Feynman-Kac and the pathwise solution to our spatial equation. Its proof is omitted for sake of conciseness, since it is easier than in the time dependent case.

\begin{proposition}
Let $W$ be a spatial Gaussian noise defined by the covariance structure~\eqref{cov2} and \eqref{innprod2}. Assume that the measure $\mu$ satisfies the condition 
\begin{equation}\label{eq:hyp-mu-pathwise-spatial-noise}
\int_{\R^{d}} \frac{\mu(d\xi)}{1+|\xi|^{2(1-\al)}}   <\infty,
\end{equation}
for a constant $\al\in (0,1)$. Then:

\smallskip

\noindent
\emph{(i)} There exists $\ka\in( 0,1)$ such that   for any arbitrarily $\si>0$, $W$ has a version in $\cb^{-\ka}_{\rho_\si}$ and the random variable $\|W\| _{\cb^{-\ka}_{\rho_\si}}$ has moments of all orders, that is the trajectories of $W$ satisfy Hypothesis   \ref{hyp:W-pathwise-d-dim}.
As a consequence, equation \eqref{eq:mild-young-she-d-dim} driven by  the trajectories of $W$ admits a unique pathwise solution in $\cac  _{\la,\si} ^{\ka_{u}}$.

\smallskip

\noindent
\emph{(ii)} Let $u^{F}$ be the Feynman-Kac solution to the heat equation given by \eqref{FK2}. Then almost surely the process $u^{F}$ lies into $\cac _{\la,\si}^{\ka_{u}}$, and it coincides with the unique pathwise solution to equation \eqref{eq:mild-young-she-d-dim}.
\end{proposition}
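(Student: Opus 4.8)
The plan is to follow the two-step strategy announced just before the statement, transcribing into the purely spatial setting the arguments of Proposition \ref{prop:hyp-gauss-pathwise} and Proposition \ref{prop:identif-FK-pathwise}. The key simplification is that $W$ now carries no time variable, so neither Young integration nor a H\"older-in-time estimate on the noise is needed; the pathwise regularity of $W$ reduces to a single Littlewood-Paley block computation.

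For item (i), I would fix exponents with $1-\al < \ka' < \ka < 1$ (possible since $\al\in(0,1)$) and work in the $L^{2q}$-based weighted Besov space $\ca_{q}=\cb_{2q,2q,\rho_{\si}}^{-\ka'}$, exactly as in Proposition \ref{prop:hyp-gauss-pathwise} but without the increment $\delta W_{st}$. Writing $\Delta_{j}W(x)=W(K_{j,x})$ with $K_{j,x}(y)=K_{j}(x-y)$, and using that each $W(K_{j,x})$ is centered Gaussian so that $\be[|W(K_{j,x})|^{2q}]\le c_{q}(\be[|W(K_{j,x})|^{2}])^{q}$, the crucial variance bound is
\begin{equation*}
\be\lc |W(K_{j,x})|^{2} \rc = \int_{\R^{d}} \lln \vp(2^{-j}\xi) \rrn^{2} \mu(d\xi) \le c_{\mu}\, 2^{2(1-\al)j},
\end{equation*}
which follows from the support of $\vp$ and the finiteness of $\nu(d\xi)=\mu(d\xi)/(1+|\xi|^{2(1-\al)})$ guaranteed by \eqref{eq:hyp-mu-pathwise-spatial-noise}. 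Summing the blocks, the $x$-integral contributes only the finite factor $\int_{\R^{d}}\rho_{\si}^{2q}$, so that $\be[\|W\|_{\ca_{q}}^{2q}]\le c_{q}\sum_{j\ge -1}2^{2q(1-\al-\ka')j}<\infty$ precisely because $\ka'>1-\al$. A Besov embedding $\ca_{q}\hookrightarrow\cb_{\rho_{\si}}^{-\ka}$ for $q$ large, followed by Fernique's lemma, then shows that $\|W\|_{\cb_{\rho_{\si}}^{-\ka}}$ has moments of all orders and that almost surely $W$ satisfies Hypothesis \ref{hyp:W-pathwise-d-dim}. The asserted existence and uniqueness of a pathwise solution in $\cac_{\la,\si}^{\ka_{u}}$ is then an immediate application of the preceding theorem to the (almost surely fixed) trajectory of $W$.

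For item (ii), I would argue by approximation. Denote by $W^{\ep}$ the regularized noise $\dot W^{\ep}_{x}=\int_{\R^{d}}p_{\ep}(x-y)W(dy)$ and by $u^{\ep}$ the associated Feynman-Kac functional; since $W^{\ep}$ is smooth, the classical Feynman-Kac formula shows that $u^{\ep}$ is the genuine pathwise solution of \eqref{eq:mild-young-she-d-dim} driven by $W^{\ep}$. Two convergences must be combined: on one hand $u^{\ep}\to u^{F}$ in $L^{p}(\Omega)$ for every $p\ge 1$, which is the spatial analogue of Proposition \ref{proplim} and rests on the exponential integrability recorded after \eqref{equ8}; on the other hand $W^{\ep}\to W$ almost surely in $\cb_{\rho_{\si}}^{-\ka}$ along a suitable subsequence $\ep_{n}\downarrow 0$, obtained from a block estimate of the type used in item (i) applied to the difference $W-W^{\ep}$. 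The continuity of the solution map furnished by the contraction argument in the proof of the preceding theorem then forces the pathwise solutions driven by $W^{\ep_{n}}$ to converge almost surely to the pathwise solution driven by $W$, and matching the two limits identifies $u^{F}$ with it by uniqueness. Membership $u^{F}\in\cac_{\la,\si}^{\ka_{u}}$ is checked separately via Proposition \ref{prop:basic-weighted-besov}(ii), which (for $\ka_{u}<\al$) reduces $\|u^{F}_{t}\|_{\cb_{w_{t}}^{\ka_{u}}}<\infty$ to the spatial H\"older regularity of $u^{F}$ established in Theorem \ref{thmHolder2}, the exponential weight $w_{t}=e_{\la+\si t}$ being absorbed using the moment bounds on the functional.

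The step requiring the most care is the coordination of the two limiting procedures in item (ii): the $L^{p}$ convergence $u^{\ep}\to u^{F}$ and the pathwise convergence $u^{\ep_{n}}\to u$ live in different topologies, so one must pass to a common subsequence along which both hold almost surely before concluding $u^{F}=u$. Everything else is a routine spatial transcription of estimates already performed in the time-dependent case, which is why the proof is lighter and was omitted in the text.
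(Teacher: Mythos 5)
Your proof follows exactly the route the paper intends: the paper omits this proof precisely because it is the time-independent transcription of Propositions \ref{prop:hyp-gauss-pathwise} and \ref{prop:identif-FK-pathwise}, and your Littlewood--Paley variance bound $\be[|W(K_{j,x})|^{2}]\le c_{\mu}2^{2(1-\al)j}$, the embedding $\ca_{q}\hookrightarrow\cb_{\rho_{\si}}^{-\ka}$ for $\ka>\ka'>1-\al$, Fernique, and the approximation-plus-uniqueness identification in item (ii) are all the right ingredients, with the correct observation that neither the increment $\delta W_{st}$ nor Young integration is needed here.

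One sub-step is off, though it does not sink the argument. You propose to verify $u^{F}\in\cac_{\la,\si}^{\ka_{u}}$ \emph{separately}, via Proposition \ref{prop:basic-weighted-besov}(ii) and the spatial H\"older regularity of order $<\al$ from Theorem \ref{thmHolder2}, ``for $\ka_{u}<\al$''. But the pathwise theory forces $\ka_{u}>\ka>1-\al$ (that is the regularity at which the noise lives after your own block estimate), so the window $1-\al<\ka_{u}<\al$ is empty whenever $\al\le 1/2$, and this route fails in general. The fix is simply to drop the separate check: membership of $u^{F}$ in $\cac_{\la,\si}^{\ka_{u}}$ is an automatic by-product of the identification itself, since the a.s. limit $u$ of $u^{\ep_{n}}$ produced by the continuity of the solution map already lies in $\cac_{\la,\si}^{\ka_{u}}$, and $u^{F}=u$ by matching with the $L^{p}$ limit. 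This is exactly how the time-dependent Proposition \ref{prop:identif-FK-pathwise} obtains membership, and your own handling of the two limiting procedures (common subsequence for the $L^{p}$ and pathwise convergences) already contains everything needed.
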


\begin{remark}
Here again, we see that the Feynman-Kac solution $u^{F}$ exists under the critical condition $\int_{\R^{d}}(1+|\xi|^{2})^{-1}\mu(d\xi)<\infty$, while the pathwise solution requires the more stringent condition \eqref{eq:hyp-mu-pathwise-spatial-noise}.
\end{remark}

\section{Moment estimates}\label{sec:moment-estimates}

 As mentioned in the introduction, intermittency properties for $u$ are characterized by the family of Lyapounov type coefficients $\ell(k)$ defined by \eqref{eq:def-lyapunov-exponent} or by the limiting behavior \eqref{eq:fraction-moments-u-k}. In any case, the intermittency phenomenon stems from an asymptotic study of the moments of $u$, for large values of $k$ and $t$. We propose to lead this study in the context of the general Gaussian noises considered in the current paper.

\smallskip

Notice that delicate results such as limiting behaviors for moments will rely on more specific conditions on the noise $W$.
We are thus going to make use of the following conditions.

\begin{hypothesis} \label{h2}
 There exist constants $c_0,C_0$ and $0<\beta<1$, such that
\[
c_0 |x|^{-\beta} \le \gamma(x) \le C_0 |x| ^{-\beta}.
\]
\end{hypothesis}

\begin{hypothesis} \label{h3}
 There exist constants $c_1,C_1$ and $0<\eta<2$, such that
\[
c_1 |x|^{-\eta} \le \laa(x) \le C_1 |x| ^{-\eta}.
\]
\end{hypothesis}

\begin{hypothesis} \label{h4} There exist  constants
$c_2,C_2$ and $0<\eta_i<1$, with $\sum_{i=1}^d \eta_i <2$, such that
\[
  c_2   \prod_{i=1}^d |x_{i}| ^{-\eta_i}\le \laa (x) \le
 C_2  \prod_{i=1}^d |x_{i}| ^{-\eta_i}.
\]
\end{hypothesis}

Clearly, Hypothesis  \ref{h2} and Hypothesis \ref{h3} generalize the case of Riesz kernels and Hypothesis  \ref{h4} generalizes the case of fractional noises. Notice that under Hypotheses \ref{h3} or~\ref{h4} the spectral measure $\mu$ satisfies the integrability condition (\ref{mu1}).

\begin{theorem}\label{thmBd1}
Suppose that  $\gamma$ satisfies Hypothesis \ref{h2} and $\laa$
satisfies  Hypothesis \ref{h3}  or Hypothesis \ref{h4}.  Denote
\[
a=
\begin{cases} \eta & \quad \hbox{  if Hypothesis \ref{h3} holds}\\
 \sum_{i=1}^d \eta_i& \quad \hbox{ if Hypothesis \ref{h4}  holds}.  \\
 \end{cases}
\]
 Consider the following two cases:
\begin{itemize}
\item[(i)]   $u$   is the solution to the Skorohod  equation \eref{eqSk1} driven by a time dependent noise with time covariance $\gamma$ and space covariance $\laa$.
\item[(ii)]     $u$ is the solution to  the Stratonovich equation \eref{eqSt1} driven by a time dependent noise with time covariance $\gamma$ and space covariance $\laa$, and we assume    that $a<2-2\beta$. 
\end{itemize}
 Then in both of these two cases   we have
\begin{equation}   \label{es0}
\exp\left(C t^{\frac{4-2\beta-a}{2-a}}k^{\frac{4-a}{2-a}}\right)\leq
\be \lc u_{t,x}^k\rc\leq  \exp\left(C^{\prime}
t^{\frac{4-2\beta-a}{2-a}}k^{\frac{4-a}{2-a}}\right)
\end{equation}
for all $t\ge 0\,, x\in \R^d\,, k\ge 2$,   where $C, C^{\prime}$ are
constants independent of $t$ and $k$.
\end{theorem}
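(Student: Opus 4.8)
The plan is to reduce both cases to a single exponential‐functional estimate and then establish matching bounds by a spatial optimization. Since $u_0\in C_b$ and, for the lower bound, we may assume $0<c_0\le u_0\le C_0$ (the factor $\prod_i u_0(B^i_t+x)$ only contributes a multiplicative $\exp(O(k))$, which is of lower order than the claimed $\exp(ct^{p}k^{q})$ because $q=\frac{4-a}{2-a}>1$), it suffices to treat $u_0\equiv 1$. In the Skorohod case (i), the moment formula \eqref{momSk1} gives directly $\be[u_{t,x}^k]=\Phi_k:=\be_B[\exp(\sum_{1\le i<j\le k}V_{ij})]$, where $V_{ij}=\int_0^t\int_0^t\gamma(s-r)\laa(B^i_s-B^j_r)\,ds\,dr$. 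In the Stratonovich case (ii) the Feynman--Kac representation \eqref{FK}, after taking the Gaussian $W$-expectation as in the proof of Theorem \ref{thmmom1}, yields the same expression multiplied by the diagonal factor $\exp(\frac12\sum_{i=1}^k \int_0^t\int_0^t\gamma(s-r)\laa(B^i_s-B^i_r)\,ds\,dr)$. Each diagonal term is finite and exponentially integrable precisely because $a<2-2\beta$, by Theorem \ref{thmexp}; separating it off by Cauchy--Schwarz and using the single‐particle (quantitative) version of the estimate below, its total contribution is at most $\exp(Ckt^{(4-2\beta-a)/(2-a)})$, which is of lower order than $\exp(ct^{p}k^{q})$ since $q>1$. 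Thus both cases come down to estimating $\Phi_k$.

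Next I would record the scaling. Replacing $\gamma,\laa$ by the pure power kernels $|s-r|^{-\beta}$ and (under Hypothesis \ref{h3}) $|x|^{-a}$ or (under Hypothesis \ref{h4}) $\prod_l|x_l|^{-\eta_l}$ --- using the \emph{upper} bounds of Hypotheses \ref{h2}--\ref{h4} for the upper estimate and the \emph{lower} bounds for the lower estimate --- and applying Brownian scaling $B^i_{ts}\stackrel{(d)}{=}\sqrt t\,\widehat B^i_s$ on $[0,1]$, one checks the homogeneity $\sum_{i<j}V_{ij}\asymp t^{2-\beta-a/2}\sum_{i<j}\widehat V_{ij}$, where $\widehat V_{ij}$ is the same functional built from Brownian motions on $[0,1]$ and the pure power kernels. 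All subsequent work is carried out with the scale $t^{2-\beta-a/2}$ factored out.

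For the lower bound I would confine all particles. Let $A=\{\sup_{s\le t}|B^i_s|\le\rho \text{ for all } i\}$. By independence and the Brownian small‐ball estimate, $\PP(A)\ge\exp(-c_d\,k\,t\,\rho^{-2})$. On $A$ one has $|B^i_s-B^j_r|\le 2\rho$ for all $i,j,s,r$, so the lower bounds in Hypotheses \ref{h2} and \ref{h3} (resp. \ref{h4}) give $\sum_{i<j}V_{ij}\ge c\binom{k}{2}\rho^{-a}\int_0^t\int_0^t|s-r|^{-\beta}\,ds\,dr\ge c'k^2\rho^{-a}t^{2-\beta}$. Hence $\Phi_k\ge\exp(c'k^2\rho^{-a}t^{2-\beta}-c_d kt\rho^{-2})$, and optimizing in $\rho$ (the maximizer being $\rho\asymp k^{-1/(2-a)}t^{(\beta-1)/(2-a)}$) produces exactly $\exp(Ct^{(4-2\beta-a)/(2-a)}k^{(4-a)/(2-a)})$. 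This same bookkeeping is what fixes the exponents $p=\frac{4-2\beta-a}{2-a}$ and $q=\frac{4-a}{2-a}$. (For the $\prod_i u_0$ factor to be bounded below on $A$ one uses $u_0\ge c_0>0$.)

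The upper bound is the main obstacle. A naive H\"older splitting of $\sum_{i<j}V_{ij}$ into its $\binom{k}{2}$ terms, reducing to $\be_B[\exp(\binom{k}{2}V_{12})]$, is too lossy: a single confined pair yields the exponent $k^{4/(2-a)}$ instead of the sharp $k^{(4-a)/(2-a)}$, because it fails to share the confinement cost of one Brownian path among all the pairs it belongs to. To capture this I would instead expand $\Phi_k$ as its exponential series and estimate the moments $M_m=\be_B[(\sum_{i<j}V_{ij})^m]$ directly. Writing $\laa$ through its spectral measure $\mu$, the $B$-expectation of the resulting product of complex exponentials factorizes over the $k$ particles into Gaussian characteristic functions, exactly as in \eqref{eq:charac1}, reducing $M_m$ to an oscillatory integral over frequencies and times weighted by $\gamma$. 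These integrals are controlled by the tools already in the paper: Lemma \ref{lemeps} for the supremum over shifted frequencies, Lemma \ref{lem1} for the split of $\mu$ into near- and far‐field parts, and Lemma \ref{lemsimplex} for the time‐simplex integration against the singular weight $|s-r|^{-\beta}$. The delicate point --- and the true heart of the argument --- is the combinatorial organization of the sum over $m$-tuples of pairs: one must track how many of the chosen pairs meet at each particle so that the confinement cost enters \emph{linearly} in $k$ while the intersection gain enters \emph{quadratically}, thereby recovering the sharp exponent $k^{(4-a)/(2-a)}$. Once $M_m$ is bounded by a quantity of the form $C^m m!\,(t^{2-\beta-a/2})^m\,\Psi(k,m)$ with the correct $k$- and $m$-dependence, summing the series and optimizing as in the lower bound yields $\Phi_k\le\exp(C't^{(4-2\beta-a)/(2-a)}k^{(4-a)/(2-a)})$, completing the proof.
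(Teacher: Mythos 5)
Your lower bound is essentially the paper's own argument: confine all $k$ particles to a ball of radius $\rho$ (the paper uses the event $A_\varepsilon=\{\sup_{i<j}\sup_{l}\sup_{s,r}|B^{i,l}_s-B^{j,l}_r|\le\varepsilon\}$, which it bounds below by a product of i.i.d.\ single-particle small-ball events), invoke the small-ball asymptotic from \cite{LS}, and optimize the radius; your bookkeeping $\rho\asymp k^{-1/(2-a)}t^{(\beta-1)/(2-a)}$ reproduces the exponents correctly. The reduction of the Stratonovich case to the Skorohod one by Cauchy--Schwarz, separating off the diagonal self-interaction terms, is also exactly what the paper does (the paper controls the diagonal factor by the quantitative asymptotics of Proposition 2.1 in \cite{ChenHuSongXing}, not by Theorem \ref{thmexp}, which only gives finiteness without the $t$-dependence --- your appeal to ``the single-particle version of the estimate below'' is therefore circular until that estimate exists).

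The genuine gap is the upper bound on $\Phi_k=\be_B[\exp(\sum_{i<j}V_{ij})]$. You correctly diagnose that a naive H\"older splitting into $\binom{k}{2}$ pairs loses the sharp power of $k$, and you propose instead to expand the exponential and bound the moments $M_m=\be_B[(\sum_{i<j}V_{ij})^m]$ by a combinatorial analysis tracking how many chosen pairs meet at each particle. But you never carry out that analysis: you state that ``the true heart of the argument'' is this combinatorial organization and then simply assert that it produces a bound of the form $C^m m!(t^{2-\beta-a/2})^m\Psi(k,m)$ with the right $k$-dependence. Lemmas \ref{lem1}, \ref{lemeps} and \ref{lemsimplex} were developed in the paper for functionals of one or two Brownian motions and do not, as stated, resolve the $k$-particle frequency integrals with the required uniformity in $k$; extracting $k^{(4-a)/(2-a)}$ from the multinomial sum over $m$-tuples of pairs is precisely the hard step. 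The paper sidesteps it entirely: for the Skorohod case it bounds the $k$-th moment via the Wiener chaos expansion and hypercontractivity ($\|I_n(f_n)\|_{L^k}\le (k-1)^{n/2}\|I_n(f_n)\|_{L^2}$, following \cite{BalanConus}), so that the sharp $k$-dependence comes for free from the chaos order and only the $L^2$ norm with its $t$-dependence needs to be estimated. Either supply the combinatorial moment bound in full or replace it with the hypercontractivity route; as written, the upper half of \eqref{es0} is not established.
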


\begin{proof}
Let us first discuss the upper bound.  For the Skorohod equation,
using the chaos expansion and the hypercontractivity property we can
derive the upper bound as it has been done in  \cite{BalanConus}.
 For the Stratonovich equation,  notice first that Hypothesis  \ref{hyp:mu-holder} holds
because  $a<2-2\beta$.
 Using the Feynmann-Kac formula
\eref{FK} for the solution to equation \eref{eqSt1},  and applying
Cauchy-Schwartz inequality yields
\begin{eqnarray*}
\be \lc u_{t,x}^k\rc &=&\be_B \lc  \exp \left(\sum_{1\leq i,j\leq
k}\int_0^t \int_0^t \gamma(r-s)\laa(B_r^i-B_s^j)dr ds\right)
\right]\,\\
&\leq & \left[\be_B  \left[\exp \left(2 \sum_{1\leq i < j \leq
k}\int_0^t \int_0^t \gamma(r-s)\laa(B_r^i-B_s^j)dr ds\right) \right]
\right]^{\frac{1}{2}}  \\
&&\times\left[\be_B\left[\exp \left(2
\sum_{i=1}^k\int_0^t \int_0^t \gamma(r-s)\laa(B_r^i-B_s^i)dr ds
\right) \right]\right]^{\frac{1}{2}}\,.
\end{eqnarray*}
In the above expression, the first term is just the square root of
the Feynman-Kac formula~\eqref{momSk1} for the moment of  order $k$ of the solution
of a Skorohod equation with multiplicative noise, with covariances
$2\gamma$ and $2\laa$. For this term we know that we can derive the
upper bound~(\ref{es0})   using the chaos expansion and the
hypercontractivity property as it has been done in
\cite{BalanConus}.
 For the second factor,  using the asymptotic result proved in Proposition~2.1 in
 \cite{ChenHuSongXing}, we  derive the
 estimate
 \[
\be ^{\frac k2} \left[ \exp \left(2 \int_0^t \int_0^t
\gamma(r-s)\laa(B_r^1-B_s^1)dr ds \right) \right] 
\leq C^k \exp \left( C  t^{\frac{4-2\beta -a}{2-a}}k\right).
\]
Therefore, in this way we can obtain the desired upper bound of $\be \lc u_{t,x}^k\rc$.

\smallskip

Let us now discuss the lower bound. Taking into account again the
Feynman-Kac formula~\eqref{momSk1} for the moments of $u$, it suffices to consider the
case of the Skorohod equation  (it is readily checked from ~\eqref{momSk1} that the moments of $u$ for the Stratonovich equation are greater than those of the Skorohod equation). The argument of the proof is then based in
the small ball probability estimates for Brownian motion.
We
consider only the case when $\laa$ satisfies the lower bound given
in  hypothesis Hypothesis \ref{h3} (Riesz kernel case), since the case
Hypothesis \ref{h4} (fractional noise) is analogous.
 In this case, owing to formula~\eref{momSk1}  and the scaling property  of the Brownian motion,
it is easy to see that
\[
\be \lc u_{t,x}^k\rc \geq  \be  \left[
\exp\left(c_0c_1 t^{2-\beta-\frac{\eta}{2}}\sum_{1\leq i < j \leq k}\int_0^1
\int_0^1 |s-r|^{-\beta} |B_s^i-B_r^j|^{-\eta} ds
dr\right)\right]\,.
\]
Denote $B_s^{i,l}, l=1,2, \cdots, d$ the $l$-th component of the
$d$-dimensional Brownian motion $B_s^i$.  Consider the set
\[
A_\varepsilon=\left\{\sup_{1\leq i<j\leq k}\sup_{1\leq l \leq
d}\sup_{0\leq s, r \leq 1}|B_s^{i,l}-B_r^{j,l}|\leq
\varepsilon\right\}.
\]
Restricting the above expectation to this event and recalling that the value of a generic constant $c$ might change from line to line, we obtain:
\begin{eqnarray}\label{eq:low-bnd-mom-u-1}
\be \lc u_{t,x}\rc^k
&\geq&\be \left[\exp \left(c \, t^{2-\beta-\frac{\eta}{2}}\sum_{1\leq i< j\leq
k}\int_0^1 \int_0^1 |s-r|^{-\beta}|  B_s^i-B_r^j|^{-\eta}ds
dr\right)   {\bf 1}_{A_{\varepsilon}}\right]\\
&\geq&\exp\left(   \frac {c \, k(k-1)}{(2-\beta)(1-\beta)}
t^{2-\beta-\frac{\eta}{2}} \varepsilon^{-\eta}\right) \, \bp\big( A_\varepsilon\big)
\geq 
\exp\big(c \, t^{2-\beta-\frac{a}{2}} k^2 \varepsilon^{-\eta}\big) \, \bp\big( A_\varepsilon\big). \notag
\end{eqnarray}
Moreover, notice that
\begin{equation*}
\cap_{i=1}^{k} \cap_{l=1}^{d} F_{i,l} \subset  A_{\ep} ,
\quad\text{with}\quad
F_{i,l} = \lp \sup_{0\leq s \leq 1}|B_s^{i,l}|\leq \frac{\varepsilon}{2} \rp.
\end{equation*}
The events $F_{i,l}$ being i.id, we get:
\begin{equation*}
\bp\big( A_\varepsilon\big) \ge \bp^{kd}\lp F_{\ep} \rp,
\quad\text{with}\quad
F_{\ep} = \lp \sup_{0\leq s \leq 1}|b_s|\leq \frac{\varepsilon}{2} \rp,
\end{equation*}
where $b$ stands for a one dimensional standard Brownian motion. In addition, it is a well known fact (see e.g (1.3) in \cite{LS}) that  $\lim_{\varepsilon \to 0}\bp(F_{\ep})/\exp(-\frac{\pi^2}{2 \varepsilon^2})=1$. Hence, there exists an $\varepsilon_0>0$ such
that for $\varepsilon\le \varepsilon_0$, we have $\bp(F_{\ep})\geq\exp(- C  \varepsilon^{-2})$, for some constant $C>0$. Under the condition $\varepsilon\leq \varepsilon_0$, this entails:
\begin{eqnarray*}
\be\lc u_{t,x}\rc^k\geq  \exp \left( c \,
t^{2-\beta-\frac{\eta}{2}}k^2\varepsilon^{-\eta}-\frac{C dk}{\varepsilon^2}\right)\, .
\end{eqnarray*}
In order to optimize this expression, we try to equate the two terms inside the exponential above. To this aim,  we set
\begin{equation*}
\varepsilon=\frac{t^{\frac{2-\beta-\frac{\eta}{2}}{\eta-2}}(c \, k)^{\frac{1}{\eta-2}}}{(2dC)^{\frac{1}{\eta-2}}}\,,
\end{equation*}
and notice that for  $k\geq 2$ and $t$ sufficiently large, the condition
$\varepsilon \leq  \varepsilon_0$ is fulfilled. Therefore, we
conclude that for $t$ and $k$ large enough
\begin{equation}
\be \lc u_{t,x}^k\rc \geq \exp
\left(\frac{   c^{\frac{\eta}{2-\eta}} \, t^{\frac{4-2\beta-\eta}{2-\eta}}k^{\frac{4-\eta}{2-\eta}}}{8
(2dC)^{\frac{a}{2-a}}}\right),
\end{equation}
which finishes the proof of  (\ref{es0}).
\end{proof}

 We now give two extensions of the theorem above. The first one concerns 
the moment estimates in the time independent case. Its proof is very similar to the proof of Theorem~\ref{thmBd1}, and is thus omitted for sake of conciseness.

\begin{theorem}
Suppose that   $\laa$ satisfies  Hypothesis \ref{h3}  or Hypothesis
\ref{h4}.   Set $a=\eta$  if  Hypothesis \ref{h3}  holds, and $a=\sum_{i=1}^d
\eta_i$ if  Hypothesis  \ref{h4} holds.  Suppose that $u$   is the
solution to the Skorohod  equation \eref{eqSk2} or the Stratonovich
equation \eref{eqSt2}  driven by a multiplicative  time independent
noise  with covariance $\laa$.
 Then,   for any $x \in \R^d$,  $k
\geq 2$, we have
\begin{equation}
\exp\left (C t^{\frac{4-a}{2-a}}k^{\frac{4-a}{2-a}}\right)\leq \be
\lc  u_{t,x} ^k\rc\leq  \exp\left(C^{\prime}
t^{\frac{4-a}{2-a}}k^{\frac{4-a}{2-a}}\right)\,,
\end{equation}
 where $C, C^{\prime} > 0$  are constants independent of  $t$ and $k$.
\end{theorem}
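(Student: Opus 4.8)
The plan is to run the argument of Theorem \ref{thmBd1} with the time covariance $\ga$ replaced by a constant kernel, which amounts to setting $\beta=0$ everywhere. The starting point is the moment formula \eref{momSk2}, namely
\[
\be \lc u_{t,x}^k \rc = \be_B \lc \prod_{i=1}^k u_0(B_t^i+x) \, \exp \lp \sum_{1\le i<j\le k} \iot\iot \laa(B_s^i-B_r^j)\,dsdr \rp \rc,
\]
in which the double integrals carry no singular weight in time. As a guiding consistency check, substituting $\beta=0$ into the exponents $\frac{4-2\beta-a}{2-a}$ and $\frac{4-a}{2-a}$ of Theorem \ref{thmBd1} collapses both of them onto the single exponent $\frac{4-a}{2-a}$ appearing in the present statement, so the whole proof should go through verbatim up to this replacement. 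Taking $u_0\equiv 1$ entails no loss of generality.

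For the \emph{upper bound}, I would first treat the Skorohod equation \eref{eqSk2}: the chaos expansion of Section \ref{sec:Skorohod} combined with hypercontractivity yields the desired estimate exactly as in \cite{BalanConus}. For the Stratonovich equation \eref{eqSt2}, I would apply Cauchy--Schwarz to the full double sum $\sum_{1\le i,j\le k}$ in the Feynman--Kac moment formula, splitting it into an off-diagonal factor and a diagonal factor. The off-diagonal factor is precisely (the square root of) the Skorohod moment of order $k$ with covariance $2\laa$, hence is controlled by the bound just obtained; the diagonal factor is a product over $i$ of exponential moments of the self-intersection functional $\iot\iot \laa(B_s^i-B_s^i)\,dsdr$, which I would bound using Proposition~2.1 of \cite{ChenHuSongXing} specialized to $\beta=0$. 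Note that, unlike the time-dependent Stratonovich case which required the extra restriction $a<2-2\beta$ to secure Hypothesis \ref{hyp:mu-holder}, here the corresponding condition is $a<2$, which is automatically guaranteed by Hypotheses \ref{h3} or \ref{h4}; so no additional assumption is needed.

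For the \emph{lower bound}, it suffices to handle the Skorohod case, since the Stratonovich moments dominate the Skorohod ones (the extra diagonal terms being nonnegative because $\laa\ge 0$). In the Riesz case (Hypothesis \ref{h3}; the case of Hypothesis \ref{h4} is analogous) I would use the lower bound $\laa(x)\ge c|x|^{-\eta}$ together with Brownian scaling $B_{ts}\stackrel{(d)}{=} t^{1/2}B_s$: the change of variables $s=tu$, $r=tv$ turns $\iot\iot|B_s^i-B_r^j|^{-\eta}dsdr$ into $t^{2-\frac{\eta}{2}}\iou\iou|B_u^i-B_v^j|^{-\eta}dudv$, so that the effective inverse temperature scales like $t^{2-\frac a2}$. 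Restricting the expectation to the small-ball event
\[
A_\varepsilon = \lcl \sup_{i<j}\sup_{1\le l\le d}\sup_{0\le s,r\le 1}|B_s^{i,l}-B_r^{j,l}|\le \varepsilon \rcl
\]
and using $\bp(A_\varepsilon)\ge \bp^{kd}(F_\varepsilon)$ with $\bp(F_\varepsilon)\ge \exp(-C\varepsilon^{-2})$ (from the small-ball asymptotics recalled in \cite{LS}), one reaches
\[
\be\lc u_{t,x}^k \rc \ge \exp\lp c\, t^{2-\frac a2} k^2 \varepsilon^{-\eta} - \frac{Cdk}{\varepsilon^2} \rp.
\]
Balancing the two terms by the choice $\varepsilon = c\, t^{-\frac{2-a/2}{2-a}} k^{-\frac{1}{2-a}}$ (legitimate for $t$ large once $\varepsilon\le\varepsilon_0$) gives both exponents equal to $\frac{4-a}{2-a}$, as required.

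The main obstacle I anticipate is not in the Skorohod estimates, which are routine, but in justifying the diagonal self-intersection term in the Stratonovich upper bound. Once the weight $|s-r|^{-\beta}$ is removed, the integrand $\laa(B_s-B_r)$ still behaves like $|s-r|^{-\eta/2}$ near the diagonal $\{s=r\}$ and so remains integrable for $\eta<2$, but establishing the \emph{exponential} moment asymptotics at $\beta=0$ requires checking that the hypotheses of \cite{ChenHuSongXing} genuinely cover this boundary regime; confirming that the standing integrability condition $a<2$ is exactly what makes this estimate work is the delicate point. All remaining computations are direct transcriptions of Theorem \ref{thmBd1}, which is why the full argument can reasonably be omitted.
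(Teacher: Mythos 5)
Your proposal is correct and follows exactly the route the paper intends: the paper omits this proof, stating only that it is obtained by repeating the argument of Theorem \ref{thmBd1} for the moment formula \eref{momSk2}, which is precisely what you carry out (Skorohod upper bound via chaos expansion and hypercontractivity, Stratonovich upper bound via Cauchy--Schwarz splitting off the diagonal self-intersection terms, lower bound via Brownian scaling and the small-ball event), and your bookkeeping of the exponents at $\beta=0$ and the optimizing choice of $\varepsilon$ both check out. The only blemish is the typo $\laa(B_s^i-B_s^i)$ for $\laa(B_s^i-B_r^i)$ in the description of the diagonal factor, which does not affect the argument.
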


Finally, when $d=1$ we can also obtain moment estimates in the case where the space covariance is a Dirac delta function, that is, the noise is white in space.

\begin{theorem}
Suppose that $\gamma$ satisfies condition Hypothesis \ref{h2} and
the spatial  dimension is $1$. Consider two cases:
\begin{itemize}
\item[(i)]
Suppose that $u$ satisfies  either the Skorohod equation
\eref{eqSk1}  or the Stratonovich equation (\ref{eqSt1}) driven by a
multiplicative noise with time   covariance  $\gamma$ and spatial
covariance $\laa(x)=\delta_0(x)$.
 Then,   for any $x \in \R^d$,  $k
\geq 2$ and  $t>0$, we have
\begin{equation}   \label{es1}
\exp\left(C
t^{3-2\beta}k^{3}\right)\leq \be \lc   u_{t,x}^k\rc \leq  \exp\left(C^{\prime}
t^{3 -2\beta}k^{3}\right)\,,
\end{equation}
 where $C, C^{\prime} > 0$  are constants independent of  $t$ and $k$.
 \item[(ii)]
 Suppose that $u$ satisfies  either the Skorohod equation \eref{eqSk2}  or the Stratonovich equation (\ref{eqSt2}) driven by a time independent multiplicative noise with spatial  covariance $\laa(x)=\delta_0(x)$.
 Then,   for any $x \in \R^d$,  $k
\geq 2$ and  $t>0$, we have
\begin{equation}   \label{es2}
\exp\left(C t^{3}k^{3}\right)\leq \be \left[ u_{t,x}^k\right]\leq
\exp\left(C^{\prime} t^{3 }k^{3}\right)\,,
\end{equation}
 where $C, C^{\prime} > 0$  are constants independent of  $t$ and $k$.
 \end{itemize}
\end{theorem}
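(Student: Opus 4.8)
The plan is to run the scheme of the proof of Theorem~\ref{thmBd1}, specialized to $d=1$, $\laa=\delta_{0}$ and $\mu=\text{Lebesgue}$, which formally corresponds to the endpoint parameter $a=1$: indeed $\frac{4-2\beta-a}{2-a}=3-2\beta$ and $\frac{4-a}{2-a}=3$ when $a=1$, matching \eqref{es1}, while \eqref{es2} is recovered by moreover setting $\beta=0$. The starting point is the moment representation. In the Skorohod case I use the white-noise-in-space formula \eqref{momwhite},
\[
\be\lc u_{t,x}^{k}\rc=\be_{B}\lc \exp\lp \sum_{1\le i<j\le k}\int_{0}^{t}\int_{0}^{t}\gamma(s-r)\,\delta_{0}(B_{s}^{i}-B_{r}^{j})\,ds\,dr\rp\rc,
\]
where the $\gamma$-weighted intersection local times are legitimate generalized Wiener functionals in dimension one, together with its time-independent analogue from \eqref{momSk2} (same formula with $\gamma\equiv 1$). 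For the Stratonovich equation the representation carries the full sum $\sum_{1\le i,j\le k}$, including the diagonal $i=j$. Since $\gamma,\laa\ge 0$, the Stratonovich moments dominate the Skorohod ones; hence it suffices to prove the lower bound for the Skorohod solution, and to reduce the Stratonovich upper bound to the Skorohod one.

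For the upper bounds I would argue exactly as in Theorem~\ref{thmBd1}. In the Skorohod case, the chaos expansion combined with hypercontractivity yields $\be[u_{t,x}^{k}]\le \exp(C' t^{3-2\beta}k^{3})$ along the lines of \cite{BalanConus}, now with $\mu=\text{Lebesgue}$ so that the spectral integrals $\int_{\R}e^{-w|\xi|^{2}}\,d\xi$ produce the one-dimensional factors $w^{-1/2}$. In the Stratonovich case, write $\sum_{1\le i,j\le k}=2\sum_{i<j}+\sum_{i}$ and apply the Cauchy--Schwarz inequality to split $\be_{B}[\exp(\cdots)]$ into the square root of a Skorohod moment with doubled covariances (controlled as above) times the diagonal factor $\be_{B}[\exp(2\int_{0}^{t}\int_{0}^{t}\gamma(s-r)\delta_{0}(B_{s}^{1}-B_{r}^{1})\,ds\,dr)]^{k/2}$. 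The latter weighted self-intersection local time is handled by the exponential integrability asymptotics of \cite[Proposition~2.1]{ChenHuSongXing}, which with $a=1$ gives a contribution of order $\exp(Ckt^{3-2\beta})$, absorbed into the leading rate since $k\le k^{3}$. (Recall that the Stratonovich solution requires $\int_{\R}\mu(d\xi)/(1+|\xi|^{2-2\beta})<\infty$, i.e.\ $\beta<\tfrac12$ for the space white noise, which is exactly what makes this functional well defined.)

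The lower bound carries the only genuinely new computation, and I would do it directly. By Brownian scaling, using $B_{ts}^{i}\stackrel{(d)}{=}\sqrt{t}\,\widetilde B_{s}^{i}$, $\delta_{0}(\sqrt{t}\,\cdot)=t^{-1/2}\delta_{0}(\cdot)$ and $\gamma(s-r)\ge c_{0}|s-r|^{-\beta}$, the exponent rescales to $\theta\sum_{i<j}\widetilde L_{ij}$ with $\theta:=c_{0}t^{3/2-\beta}$ (resp.\ $\theta=t^{3/2}$ in the time-independent case) and $\widetilde L_{ij}=\int_{0}^{1}\int_{0}^{1}|u-v|^{-\beta}\delta_{0}(\widetilde B_{u}^{i}-\widetilde B_{v}^{j})\,du\,dv$. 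Since $|u-v|^{-\beta}\ge 1$ on $[0,1]^{2}$, we have $\widetilde L_{ij}\ge\alpha_{ij}:=\int_{0}^{1}\int_{0}^{1}\delta_{0}(\widetilde B_{u}^{i}-\widetilde B_{v}^{j})\,du\,dv$, so the time weight only survives through the prefactor $\theta$. Next I restrict to the confinement event $A_{\varepsilon}=\cap_{i=1}^{k}C_{i}$, where $C_{i}=\{\sup_{0\le s\le 1}|\widetilde B_{s}^{i}|\le \varepsilon/2\}$, and use conditional Jensen together with the exchangeability of the paths:
\[
\be\lc \exp\lp\theta{\textstyle\sum_{i<j}}\alpha_{ij}\rp \,\Big|\, A_{\varepsilon}\rc
\ge \exp\lp \theta \binom{k}{2}\,\be[\alpha_{12}\mid A_{\varepsilon}]\rp .
\]
The key estimate is then $\be[\alpha_{12}\mid A_{\varepsilon}]\ge \varepsilon^{-1}$: writing $\alpha_{12}=\int_{\R}L^{1}(x)L^{2}(x)\,dx$ through occupation densities and using independence gives $\be[\alpha_{12}\mid A_{\varepsilon}]=\int_{\R}\bar L(x)^{2}\,dx$ with $\bar L(x)=\be[L^{1}(x)\mid C_{1}]$ supported in an interval of length $\varepsilon$ and satisfying $\int_{\R}\bar L=1$, whence Cauchy--Schwarz forces $\int_{\R}\bar L^{2}\ge\varepsilon^{-1}$. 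Combining this with the small-ball bound $\bp(A_{\varepsilon})\ge\exp(-c\,k\varepsilon^{-2})$ (from $\bp(\sup_{[0,1]}|b|\le\varepsilon/2)\ge\exp(-c\varepsilon^{-2})$, cf.\ \cite{LS}) gives $\be[u_{t,x}^{k}]\ge\exp(c_{1}\theta k^{2}\varepsilon^{-1}-c_{2}k\varepsilon^{-2})$; optimizing over $\varepsilon\asymp(\theta k)^{-1}$ yields $\exp(c\,\theta^{2}k^{3})=\exp(c\,t^{3-2\beta}k^{3})$ (resp.\ $\exp(c\,t^{3}k^{3})$), which is \eqref{es1} (resp.\ \eqref{es2}) for $t,k$ large, the remaining range being absorbed in the constants.

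The main obstacle is the sharp upper bound rather than the lower one. One must make sure that the hypercontractivity argument of \cite{BalanConus} and the self-intersection exponential estimate of \cite{ChenHuSongXing}, designed for honest spatial covariance functions, remain valid when $\laa$ is the Dirac measure $\delta_{0}$, so that the relevant Wiener functionals exist only through the regularization of Section~\ref{sec:FK-moments}, and to control the interaction between the singular time weight $|s-r|^{-\beta}$ and the spatial delta (this is precisely where the restriction $\beta<\tfrac12$ intervenes in the Stratonovich case). By contrast the lower bound is robust: its only delicate ingredient, the occupation-density identity for $\be[\alpha_{12}\mid A_{\varepsilon}]$, is dispatched by the elementary Cauchy--Schwarz bound above.
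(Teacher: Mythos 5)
Your proposal is correct and reproduces the paper's overall architecture — the moment formulas \eqref{momwhite} and \eqref{momSk2}, the reduction of the Stratonovich bounds to the Skorohod ones via positivity and Cauchy--Schwarz, the citation of \cite{BalanConus} for the hypercontractive upper bound and of \cite{ChenHuSongXing} for the diagonal self-intersection term, and a confinement-plus-small-ball-plus-optimization scheme for the lower bound. The one place where you genuinely diverge from the paper is the mechanism inside the lower bound. The paper never passes to the limiting intersection local time: it keeps the mollified functional $I_{t,k,\varepsilon}$ built from $p_{\varepsilon}$, proves $\be[u_{t,x}^k]\ge I_{t,k,\varepsilon}$ by a Fourier/positivity argument, and then on the event $\{\max_i \sup_{s\le t}|B^i_s|\le \sqrt{\varepsilon}\}$ uses the pointwise bound $p_{\varepsilon}(B^i_s-B^j_r)\ge C\varepsilon^{-1/2}$, so that the mollification scale is deliberately tied to the confinement radius. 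You instead work (after Brownian scaling) with the genuine local times, apply conditional Jensen on the confinement event, and lower bound $\be[\alpha_{12}\mid A_{\varepsilon}]$ by $\varepsilon^{-1}$ through the occupation-density identity and Cauchy--Schwarz. The two routes are equivalent up to the reparametrization $\varepsilon_{\text{paper}}=\varepsilon_{\text{yours}}^2$ and give the same exponent $k^3t^{3-2\beta}$ after optimization; your version is arguably cleaner in that it decouples the mollification from the confinement and extracts the full first moment of the intersection time rather than a pointwise bound, while the paper's version has the advantage of never needing the limiting delta-functionals to exist pathwise. (Incidentally, the paper's displayed choice $\varepsilon=4C_2^2/(C_1^2k^3t^{2-2\beta})$ contains a typo — the correct optimizer has $k^2$ — but the final rate is unaffected.)

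One small point you should tighten: since the exponent in \eqref{momwhite} is only defined as an $L^2$-limit of regularizations, the inequalities you use freely on the limiting objects — dropping the weight via $|u-v|^{-\beta}\ge 1$ and writing $\alpha_{12}=\int_{\R}L^1(x)L^2(x)\,dx$ — should either be justified as a.s.\ inequalities between limits or, more simply, be run on the regularized functionals $\int\!\!\int p_{\varepsilon'}(B^1_u-B^2_v)\,du\,dv$ with $\varepsilon'\to 0$ after conditioning, exactly as the paper does with its $I_{t,k,\varepsilon}$. This is a routine repair (your conditional computation gives $\int\!\!\int \bar L(x)\bar L(y)p_{\varepsilon'}(x-y)\,dx\,dy \to \int \bar L^2$, still bounded below by $c\,\varepsilon^{-1}$ for $\varepsilon'$ small), but as written the step is not fully rigorous. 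Your observation that the Stratonovich half of case (i) implicitly requires $\beta<\tfrac12$ is accurate and consistent with the remark following Theorem \ref{thmFK1}.
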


\begin{proof}
In the Skorohod case with time dependent noise, the moments of $u_{t,x}$ are given by equation  \eref{momwhite}.
We will only discuss the lower bound because the upper bound can be obtained by using chaos expansions as in  \cite{BalanConus}.
We consider the approximation of the Dirac delta function by the heat kernel $p_{\ep}$, and define
\begin{equation}
I_{t,k,\varepsilon}=\be _B \left[\exp \left(\sum_{1\leq i< j \leq
k}\int_0^t \int_0^t
\gamma(s-r)p_{\varepsilon}(B_s^i-B_r^j)dsdr\right)\right]\,.
\end{equation}
Expanding the exponential and using Fourier analysis as in
\cite{HN}, one can show that $\be \lc
u_{t,x}^k\rc\geq I_{t,k,\varepsilon}$, for any $ \varepsilon>0$.  For any positive
$\varepsilon$, denote
\begin{equation*}
A_{k,\varepsilon,t}=
\left\{\max_{1\leq i \leq k}\sup_{0\leq s \leq t}|B^i_s|\leq \sqrt{\varepsilon}\right\}\,.
\end{equation*}
On the event $A_{k,\varepsilon,t}$ we have
$p_{\varepsilon}(B_s^i-B_r^j)\geq \frac{C}{\sqrt{\varepsilon}}$ for
some positive constant $C$.  Therefore, using the lower bound in
Hypothesis \ref{h2}, we can write similarly to \eqref{eq:low-bnd-mom-u-1}:
\begin{eqnarray*}
I_{t,k,\varepsilon}&\geq& \exp \left(c\, k^{2} \int_0^t \int_0^t |s-r|^{-\beta}\frac{C}{\sqrt{\varepsilon}}ds dr\right)\bp \left(A_{k,\varepsilon,t}\right)\,.
\end{eqnarray*}
Furthermore, by the scaling property of Brownian motion, $\bp \left(A_{k,\varepsilon,t}\right)$ can be written as:
\begin{equation*}
\bp \lp A_{k,\varepsilon,t}\rp= \bp  \left(\max_{1\leq i \leq
k}\sup_{0\leq s \leq 1}|B_s^i|\leq \sqrt{\varepsilon/
t}\right)=\left(\bp \left(\max_{0 \leq s \leq 1}|b_s|\leq
\sqrt{\varepsilon/t}\right) \right)^k\, ,
\end{equation*}
where $b$ stands  for a one-dimensional standard Brownian motion. We now invoke again~(1.3) in \cite{LS},  which yields $\lim_{\varepsilon \to
0}\bp (\sup_{0\leq s \leq 1}|B_s|\leq
\sqrt{\frac{\varepsilon}{t}})/\exp(-\frac{\pi^2}{8}\frac{t}{\varepsilon})=1$. Thus, when $\varepsilon$ is sufficiently small,
\begin{equation*}
\bp \left(\sup_{0\leq s \leq 1} |B_s|\leq \sqrt{\frac{\varepsilon}{t}} \right)\geq 
\exp \left(-C\frac{t}{\varepsilon}\right) ,
\end{equation*}
for some positive constant $C$ which does not depend on $t$.  Hence, we end up with the following lower bound:
\begin{eqnarray*}
I_{t,k,\varepsilon}\geq \exp \left(C_1 k^2 t^{2-\beta}\frac{1}{\sqrt{\varepsilon}}-C_2 \frac{t}{\varepsilon}\right)\,.
\end{eqnarray*}
As in the proof of Theorem \ref{thmBd1}, we optimize this expression by choosing $\varepsilon = \frac{4C_2^2}{C_1^2 k^3 t^{2-2\beta}}$,
and we obtain that
\begin{equation}
I_{t,k,\varepsilon}\geq \exp (C_3 t^{3-2\beta}k^3)
\end{equation}
when $t$ is sufficiently large, where the positive constant $C_3$ does not depend on $t$ or $k$.

\smallskip

For the Stratonovich case, the lower bound is obvious and for the
upper bound we use  the Cauchy-Schwartz inequality and  Lemma 2.2 in
\cite{ChenHuSongXing}.  The estimate \eref{es2} is proved similarly, which completes the proof.
\end{proof}

\begin{remark}
As a consequence of Theorems 6.4, 6.5 and 6.6, the solution $u$ of both the Skorohod and Stratonivich equations is intermittent in the sense of condition (\ref{eq:fraction-moments-u-k}). 
\end{remark}

\end{document}